\theoremstyle{plain}
\newtheorem{theorem}{Theorem}[section]
\newtheorem*{theorem_intro}{Theorem}
\newtheorem{proposition}[theorem]{Proposition}
\newtheorem{lemma}[theorem]{Lemma}
\newtheorem{corollary}[theorem]{Corollary}
\theoremstyle{definition}
\newtheorem{definition}[theorem]{Definition}
\newtheorem{remark}[theorem]{Remark}
\newtheorem{example}[theorem]{Example}
\newtheorem{notation}[theorem]{Notation}
\newtheorem{question}[theorem]{Question}
\newtheorem*{question_intro}{Question}
\newcommand{\op}{\mathrm{op}}
\newcommand{\len}{\mathrm{len}}
\newcommand{\Inj}{\mathrm{Inj}}
\newcommand{\dFl}{\mathrm{dFl}}
\newcommand{\bC}{\mathbf{C}}
\newcommand{\Cat}{\mathbf{Cat}}
\newcommand{\Set}{\mathbf{Set}}
\newcommand{\Reach}{\mathbf{Reach}}
\newcommand{\Quiver}{\mathbf{Quiver}}
\newcommand{\Pre}{\mathrm{Pre}}
\newcommand{\MC}{\mathrm{MC}}
\newcommand{\MH}{\mathrm{MH}}
\newcommand{\EMC}{\mathrm{EMC}}
\newcommand{\EMH}{\mathrm{EMH}}
\newcommand{\DMC}{\mathrm{DMC}}
\newcommand{\DMH}{\mathrm{DMH}}
\newcommand{\gir}{\mathrm{gir}}
\newcommand{\cA}{\mathcal{A}}
\title[EMH: diagonality and computations]{Eulerian magnitude homology: diagonality, injective words, and regular path homology}
\author{Luigi Caputi}
\address[LC]{Department of Mathematics, University of Bologna, Via Zamboni 33, 40126 Bologna, Italy}
\email{luigi.caputi@unibo.it}
\author{Giuliamaria Menara}
\address[GM]{Department of Informatics, Systems and Communication, University of Milano-Bicocca, Viale Sarca 336, 20145 Milano, Italy}
\email{giuliamaria.menara@unimib.it}
\keywords{Eulerian Magnitude Homology $\cdot$ Regular Path Homology $\cdot$ Complex of Injective Words $\cdot$ Magnitude-Path Spectral Sequence}
\begin{document}

\begin{abstract}
In this paper we explore the algebraic structure and combinatorial properties of eulerian magnitude homology. 
First, we analyze the diagonality conditions of eulerian magnitude homology, providing a characterization of complete graphs. Then, we 
construct the regular magnitude-path spectral sequence as the spectral sequence of the (filtered) injective nerve of the reachability category, and explore its consequences. Among others, we show that such spectral sequence converges to the complex of injective words on a digraph,  and yields characterization results for the regular path homology of diagonal directed graphs.
\end{abstract}
\maketitle
\section*{Introduction}

Magnitude homology, introduced by Hepworth and Willerton in \cite{hepworth2015categorifying}, is a combinatorially defined graph invariant that categorifies magnitude~\cite{leinster2019magnitude}. It provides a powerful tool for studying the combinatorial and categorical structure of graphs, see, \textit{eg}.~\cite{arXiv:1809.07240,zbMATH07283643,zbMATH07381918, zbMATH07370433,zbMATH07731261,arXiv:2312.01378, zbMATH07681908,asao2024girth,zbMATH07953420, zbMATH07868225,arXiv:2404.06689,arXiv:2411.02838,arXiv:2405.04748,zbMATH07860680} for a  (non-exhaustive) list of recent works.
Eulerian magnitude homology, introduced by Giusti and Menara in \cite{arXiv:2403.09248} (but see also~\cite{arXiv:2409.03472, arXiv:2410.10376}), is a variant of magnitude homology. Its definition refines the notion of magnitude chains by considering only chains with \textit{distinct} vertices in a graph. Although magnitude homology and eulerian magnitude homology are quite close homology theories -- they are related by a long exact sequence and, further, they coincide on posets -- they satisfy different algebraic and combinatorial properties. In this work, we aim to shed some light on  such differences, namely on the combinatorial and topological aspect. 

From a combinatorial viewpoint, a key result of this paper is the characterization of complete graphs via eulerian magnitude homology. In analogy with the notion of diagonality for magnitude homology, we say that a graph $G$ is \textit{regularly diagonal} if its associated eulerian magnitude homology~$\EMH_{*,*}(G)$ is diagonal. 

\begin{theorem_intro}[\textit{cf}.~Theorem~\ref{thm:charregdiag}]
    Let $G$ be a finite connected regularly diagonal undirected graph, with eulerian magnitude homology groups $\EMH_{k,k}(G)=0$  in all bidegrees $(k,k)$ for $k>n$. 
    Then, $G$ is the complete graph on $n+1$ vertices. 
\end{theorem_intro}

As all complete undirected graphs are regularly diagonal, this shows that eulerian magnitude homology detects the complete graphs. We recall here that this is not the case for magnitude homology, for which the family of undirected diagonal graphs is quite larger. When considering directed graphs, the story is different, and we do not know of any characterization of regularly diagonal digraphs. On the other hand, in analogy with what happens for magnitude homology of graphs, cones and, more generally, joins of regularly diagonal digraphs are regularly diagonal (see~Theorem~\ref{thm:joins}). We shall show in Section~\ref{sec:regularly diag digraphs} that directed trees, bipartite graphs with alternating orientations, and transitive tournaments, are other examples of regularly diagonal digraphs.

Besides the combinatorial appeal, our interest in (regular) diagonality of graphs stems from the fact that it is  useful in computations of \textit{regular path homology}~\cite{zbMATH06545866,arXiv:2208.14063}, a variant of the more classical and established path homology~\cite{zbMATH07253738}. With an eye towards these computations, we develop the analogue of the magnitude-path spectral sequence~\cite{zbMATH07731261} -- which we call the \textit{regular magnitude-path spectral sequence}. Using the same notation used in~\cite{zbMATH07731261}, we have:

\begin{theorem_intro}[\textit{cf}.~Theorem~\ref{thm:rmpss}]
Let $G$ be a finite directed graph.
    Then, the regular magnitude-path spectral sequence $IE_*^{*,*}$ satisfies the following properties:
    \begin{enumerate}
        \item The first page $IE_*^{*,1}$ is the eulerian magnitude homology of $G$.
        \item The diagonal of the second page $IE_*^{*,2}$ is the regular path homology.
        \item It converges to the homology of the complex of injective words on $G$.
        \item There is a  map of spectral sequences from $IE_*^{*,r}$ to the magnitude-path spectral sequence.
    \end{enumerate}
\end{theorem_intro}

The regular magnitude-path spectral sequence is the spectral sequence associated to the filtered simplicial set~$N^\iota(\Reach_G)$, which is the injective nerve of the reachability category of a directed graph $G$~\cite{zbMATH07844814}. The injective nerve is defined as the simplicial set on functors to $\Reach_G$ that are injective on objects -- see Section~\ref{sec:injnerve} -- and it agrees with the directed flag complex of the transitive closure $\Pre(G)$ (also known as \textit{reachability preorder}~\cite{zbMATH07868225,arXiv:2312.01378}) of~$G$. In particular, $N^\iota(\Reach_G)$ models the complex of injective words on $G$ -- see Definition~\ref{def:injwordsgraph} -- which in turn generalizes the classical complex of injective words~\cite{zbMATH03647758,zbMATH03811580,zbMATH07177613}. 
This leads to an interpretation of eulerian magnitude homology in terms of directed flag complexes and reachability preorders, and, in turn, to a description of the directed flag complex in terms of injective nerves, which we believe to be of independent interest.
As applications, we show the following:

\begin{theorem_intro}[\textit{cf}.~Proposition~\ref{prop:gjaja}]
    Let $G$ be a finite and connected regularly diagonal directed graph.  
    Then, the regular path homology of $G$ is isomorphic to the homology of the complex of injective words on $G$. 
\end{theorem_intro}

As a consequence, the reduced regular path homology of directed trees and transitive tournaments is trivial, whereas the regular path homology of complete graphs is the homology of a wedge of spheres. This last computation allows us to prove that regular path homology (and more generally, the pages of the regular magnitude-path spectral sequence) are not homotopy invariants -- see Corollary~\ref{cor:nonhinvpathhom}.

The results presented here open several exciting avenues for further research, and building upon these findings, we propose two questions, related to the above discussion, that we find particularly interesting and worth exploring:

\begin{question_intro}
    Is there a complete characterization of regularly diagonal directed graphs?
\end{question_intro}

\begin{question_intro}
    For a finite category $\bC$, when is the inclusion of the injective nerve
    \[
N^\iota(\bC)\to N(\bC)
\]
in the nerve of $\bC$ a weak equivalence of simplicial sets? 
\end{question_intro}

The characterization of complete undirected graphs  as by Theorem~\ref{thm:rmpss} was more recently shown also in~\cite{martin2025torsion} by Sazdanovic and Martin. 
The similarities with their work 
arose from independent work.

\subsection*{Outline}
The paper is organized as follows.
We recall in Section \ref{sec:EMH and RPH} some general background about first standard and eulerian magnitude homology, and further standard and regular path homology.
We proceed by investigating in Section \ref{sec:regularly diag graphs} diagonality conditions for eulerian magnitude homology of undirected graphs, and we then expand this study in Section \ref{sec:regularly diag digraphs} examining diagonality properties of eulerian magnitude homology of digraphs.
In Section \ref{sec:complex inj word digraph} we extend to the setting directed graphs the notion of complex of injective words, which will emerge as the target object of the regular magnitude-path spectral sequence developed in Section \ref{sec:regular MPSS}.

\subsection*{Conventions}

Unless otherwise specified, all (directed) graphs and categories are finite. Undirected graphs are simple. In the case of directed graphs, we shall not allow multiple directed edges, which means that the set of edges of a directed graph $G$ with vertices $V(G)$ is a subset of the ordered pairs $V(G)\times V(G)$.
All modules are taken over a base commutative ring~$R$. We shall use the following notations:

\begin{tabular}{ll}
$\rho(G)$ & the directed graph associated to an undirected graph $G$; \textit{cf}.~Notation~\ref{notation:digraphs}\\
$K_n$ & undirected complete graph on $n$ vertices\\
$C_n$ & undirected cyclic graph on $n$ vertices\\
$I_n$ & undirected linear graph on $n$ vertices\\
$L_n$ &
directed linear graph on $n$ vertices\\
$T_n$ & transitive tournament on $n+1$ vertices \\
$\gamma_n^s$ & girth of the maximal sub-cycle achievable by removing $s$ edges from $K_n$;\\
&\textit{cf}.~Notation~\ref{notation:maximal subcycle}.
\end{tabular}

\subsection*{Acknowledgments}

LC~was supported by the Starting Grant 101077154 ``Definable Algebraic Topology'' from the European Research Council of Martino Lupini. During the late stages of the writing, LC was partially supported by the INdAM ``borse di studio per l’estero'', and is grateful to Yasuhiko Asao and the University of Fukuoka for their kind hospitality.

The authors are thankful to Radmila Sazdanovic and Patrick Martin for a useful discussion that led to the proof of Proposition~\ref{prop:torsion}. The authors are also grateful to the anonymous referee, whose comments led to an improvement of the paper and to Remark~\ref{rem:homotopy}.

\section{Eulerian magnitude homology and regular path homology}
\label{sec:EMH and RPH}

In this section we recall the definitions of magnitude homology~\cite{hepworth2015categorifying}, eulerian magnitude homology~\cite{arXiv:2403.09248}, path homology~\cite{zbMATH06435208} and regular path homology groups~\cite{zbMATH06545866,arXiv:2208.14063} of (directed) graphs. 
In the follow-up, a (directed) graph~$G$ is also considered as a metric space, equipped with the path metric~$d_G$. Unless otherwise stated, all (directed) graphs are finite and connected -- that is, the geometric realization as CW-complex is connected. 

We start with the case of an undirected graph~$G$. For a tuple $(x_0,\dots,x_k)$ of vertices of~$G$, with $x_i\neq x_{i+1}$ and $d_{G}(x_i,x_{i+1})<\infty$ for each~$i$, the \emph{length} of $(x_0,\dots,x_k)$ in $G$ is the number 
\begin{equation}\label{eq:len}
\len(x_0,\dots,x_k)\coloneqq \sum_{i=0}^{k-1}d_{G}(x_i,x_{i+1}) \ .
\end{equation} 
Let~$R$ be a commutative unital ring,  and $l,k\in \mathbb{N}$ natural numbers. We let
    \[
    \MC_{k,l}(G;R)\coloneqq R\langle (x_0,\dots,x_k) \mid x_0\neq \dots\neq x_k, \len(x_0,\dots,x_k)=l \rangle 
    \]
    be the free $R$-module on the tuples of length $l$ consisting of $k+1$ vertices  of $G$. 
    The differential
    $
    \partial=\partial_{k,l}\colon\MC_{k,l}(G;R)\to\MC_{k-1,l}(G;R)
    $
    acts on $(k+1)$-uples $(x_0,\dots,x_k)$ of $G$ by
\begin{equation}\label{def:differential}
    \partial(x_0,\dots,x_k)\coloneqq \sum_{i=1}^{k-1} (-1)^i\partial_i(x_0,\dots,x_k) \ ,
\end{equation}
where the $i$-th face map, defined as 
\[
\partial_i(x_0,\dots,x_k)=
\begin{cases}
    (x_0,\dots,x_{i-1},x_{i+1}, \dots, x_k) & \text{ if } \len(x_0,\dots,x_{i-1},x_{i+1}, \dots, x_k)=l\\
    0 & \text{ otherwise}
\end{cases} 
\] 
forgets the $i$-th entry if the length is preserved, and it is  set to be~$0$ otherwise. The pair $(\MC_{*,l}(G;R), \partial)$ is a chain complex by \cite[Lemma~2.11]{hepworth2015categorifying}. The \emph{magnitude homology}~$\MH_{*,*}(G;R)$ of $G$ is  the bigraded $R$-module $\bigoplus_{k,l\geq 0} \MH_{k,l}(G;R)$, where
\[
\MH_{k,l}(G;R)\coloneqq \mathrm{H}_k(\MC_{*,l}(G;R), \partial)
\]
is  the homology of the magnitude chain complex~\cite[Definition~2.4]{hepworth2015categorifying}. In the following, we will simply write $\MC_{*,*}(G)$ and $\MH_{*,*}(G)$, dropping the reference to the ring $R$ when clear from the context.

\begin{notation}\label{notation:digraphs}
    For an undirected graph $G$, we let $\rho(G)$ be the directed graph obtained from $G$ by replacing each undirected edge $\{x,y\}$ of $G$ with both directed edges $(x,y)$ and $(y,x)$ in $\rho(G)$.
\end{notation}

The definition of magnitude homology extends \textit{verbatim} to directed graphs~\cite{zbMATH07731261}. 
    \begin{remark}\label{rem:MHdirected}
       Note that if $G$ is a directed graph, then the distance $d_G(x,y)$ between vertices of $G$ is the minimum length of directed paths from $x$ to $y$ in $G$. Hence, in the directed case, $\MC_{k,l}(G;R)$ is the $R$-module freely generated by tuples of vertices of $G$ of length $l$, with the property that, for all $i=0,\dots,k-1$, we have $x_i\neq x_{i+1}$, and there exists a directed path in $G$ from $x_i$ to $x_{i+1}$.   If $\rho(G)$ is the directed graph obtained from the undirected graph $G$, then we have $\MH_{k,l}(G)=\MH_{k,l}(\rho(G))$ -- showing that there is no ambiguity in the definition.
    \end{remark}

In the definition of magnitude homology we have assumed that a tuple $(x_0,\dots,x_k)$  of vertices of $G$ is allowed if it satisfies $x_i\neq x_{i+1}$ for all $i=0,\dots,k-1$. A variation of such definition asks that the vertices appearing in the tuples are \emph{all} distinct:

\begin{definition}[{\cite[Definition~3.1]{arXiv:2403.09248}}]
Let $G$ be a graph on the set of vertices~$V(G)$. The $(k,\ell)$-eulerian magnitude chain $\EMC_{k,\ell}(G)$ is the free $R$-module generated by tuples $(x_0,\dots,x_k) \in V(G)^{k+1}$ of length~$\ell$ such that $x_i \neq x_j$ for every $0\leq i,j \leq k$.
\end{definition}
	
It follows from the definition that the eulerian magnitude chain complex is trivial when the length of the tuple is too short to support the necessary landmarks. Indeed, we have the following straightforward property;
	
\begin{lemma}
\label{lem:LowerTriangular}
Let $G$ be an undirected graph, and $k > \ell$ non-negative integers. Then, $\EMC_{k,\ell}(G) = 0$.
\end{lemma}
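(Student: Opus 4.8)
The plan is to bound the length of any generating tuple from below by $k$, which immediately forces $\ell \geq k$ and so contradicts the hypothesis $k > \ell$.

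First I would observe that any generator of $\EMC_{k,\ell}(G)$ is, by definition, a tuple $(x_0,\dots,x_k)$ whose entries are pairwise distinct; in particular $x_i \neq x_{i+1}$ for every $i = 0,\dots,k-1$. Since $G$ is an undirected graph equipped with its path metric $d_G$, the distance between two distinct vertices is at least $1$ (and is finite precisely when the two vertices lie in the same connected component). A generator is required to have finite length equal to $\ell$, so every consecutive distance $d_G(x_i,x_{i+1})$ must be a positive integer.

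Next, summing these $k$ lower bounds I would obtain
\[
\ell = \len(x_0,\dots,x_k) = \sum_{i=0}^{k-1} d_G(x_i,x_{i+1}) \geq \sum_{i=0}^{k-1} 1 = k \ .
\]
Hence the mere existence of a generator forces $\ell \geq k$. Taking the contrapositive, when $k > \ell$ there is no admissible tuple of length $\ell$ with $k+1$ distinct vertices, so the generating set is empty and the free module $\EMC_{k,\ell}(G)$ is trivial.

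The argument is entirely elementary and I do not anticipate any genuine obstacle; the only point that deserves to be stated carefully is that the path-metric distance between distinct vertices of a graph is at least $1$, which is immediate from the definition of $d_G$ and is what turns the count of $k$ summands into the bound $\ell \geq k$.
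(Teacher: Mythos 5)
Your proof is correct and is exactly the counting argument the paper has in mind: the lemma is stated there as a ``straightforward property'' with no written proof, and the intended justification is precisely that each of the $k$ consecutive distances in a generating tuple is at least $1$, forcing $\len(x_0,\dots,x_k)\geq k$ and hence emptiness of the generating set when $k>\ell$. Nothing is missing; your remark that distinct vertices are at distance at least $1$ in the path metric is the only substantive point, and you state it correctly.
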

	
The magnitude differential of Eq.~\eqref{def:differential} restricts to a differential on the eulerian magnitude chains. For a non-negative integer $\ell$, we obtain the \emph{eulerian magnitude chain complex}~$\EMC_{*,\ell}(G)$.
	
\begin{definition}[{\cite[Definition~3.2]{arXiv:2403.09248}}]
\label{def_EMH}
The $(k,\ell)$-eulerian magnitude homology group of an undirected graph $G$ is defined by
$
\EMH_{k,\ell}(G) \coloneqq \mathrm{H}_k(\EMC_{*,\ell}(G),\partial)$. 
\end{definition}

As shown in Remark~\ref{rem:MHdirected}, also eulerian magnitude homology extends to directed graphs and the eulerian magnitude homology $\EMH_{k,l}(G)$ of a graph $G$ agrees with the eulerian magnitude homology of the directed graph $\rho(G)$ associated to $G$. 
By construction, we directly have the following proposition.
	
\begin{proposition}
Let $G$ be a (directed) graph with vertex set $V(G)$. Then, for each positive integer $\ell\ge 0$, we have
\[ 
\EMC_{\ast, \ell}(G) =\bigoplus_{a, b\in V(G)} \EMC_{\ast, \ell}(a, b) \ ,
\]
where $\EMC_{\ast, \ell}(a, b)$ is the subcomplex of $\EMC_{\ast, \ell}(G)$ generated by tuples that start at~$a$ and end at $b$.
\end{proposition}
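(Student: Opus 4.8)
The plan is to establish the claimed direct sum decomposition by exhibiting an explicit basis splitting. First I would observe that, by definition, $\EMC_{*,\ell}(G)$ is the free $R$-module on tuples $(x_0,\dots,x_k)$ of pairwise-distinct vertices having total length $\ell$. Each such generator has a well-defined first entry $x_0=a$ and last entry $x_k=b$, so the set of generators partitions as a disjoint union indexed by the pairs $(a,b)\in V(G)\times V(G)$. Letting $\EMC_{*,\ell}(a,b)$ denote the free submodule spanned by those generators that start at $a$ and end at $b$, we obtain at the level of graded modules a direct sum $\EMC_{*,\ell}(G)=\bigoplus_{a,b}\EMC_{*,\ell}(a,b)$, simply because a basis splits as a disjoint union.

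The only content beyond this bookkeeping is to check that each summand $\EMC_{*,\ell}(a,b)$ is a \emph{subcomplex}, i.e. that the differential preserves the first and last entries. This is where I would focus the argument. The differential $\partial$ of Eq.~\eqref{def:differential} is an alternating sum of the face maps $\partial_i$ for $i=1,\dots,k-1$ only; the extreme indices $i=0$ and $i=k$ are deliberately excluded from the sum. Consequently no summand of $\partial(x_0,\dots,x_k)$ ever deletes $x_0$ or $x_k$, so every term appearing in $\partial(x_0,\dots,x_k)$ (whether a nonzero face or the zero term) still begins with $a$ and ends with $b$. Hence $\partial$ maps $\EMC_{*,\ell}(a,b)$ into itself, and the decomposition respects the differential.

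I do not anticipate any genuine obstacle here: the result is an immediate consequence of the shape of the magnitude differential, which omits the two boundary faces precisely so that endpoints are preserved. The only point requiring a sentence of care is to note that this decomposition is indexed over \emph{ordered} pairs $(a,b)$ and that it is compatible with the analogous, standard decomposition of the ordinary magnitude chain complex $\MC_{*,\ell}(G)$; the eulerian case follows because $\EMC_{*,\ell}(G)\subseteq\MC_{*,\ell}(G)$ is a direct summand compatible with these endpoint gradings. Finally, I would remark that the diagonal terms $a=b$ contribute only in total length $\ell=0$ for a single-vertex tuple, since distinctness of vertices forbids a nontrivial closed tuple, which is a sanity check on the statement rather than part of the proof.
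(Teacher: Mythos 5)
Your proof is correct and matches the paper's reasoning: the paper states this proposition with no proof at all (``By construction, we directly have the following proposition''), and your argument simply makes explicit the evident justification, namely that the generators partition by their endpoints and that the differential $\partial=\sum_{i=1}^{k-1}(-1)^i\partial_i$ omits the two extreme faces and hence preserves first and last entries. The only caveat is your closing aside that $\EMC_{*,\ell}(G)\subseteq\MC_{*,\ell}(G)$ is a ``direct summand'': this is true only at the level of graded modules, not of chain complexes (the magnitude differential can send a non-eulerian tuple to eulerian ones), but since your main argument never uses this remark, it does not affect the proof.
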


Although the eulerian magnitude chain complex is defined as a subcomplex of the magnitude chain complex, note that eulerian magnitude homology does not embed in magnitude homology -- see also~\cite{arXiv:2403.09248}.
Furthermore, although the definitions are quite similar, magnitude homology and eulerian magnitude homology do not share the same formal properties; for example, they behave very differently with respect to products. Let $G$ and $H$ be graphs with sets of vertices $V(G)$ and $V(H)$. Recall first that the Cartesian product of $G$ and $H$ is the graph on the vertices $V(G)\times V(H)$ with an edge between $(x_1,y_1)$ and $(x_2,y_2)$ if either $x_1=x_2$ and $\{y_1,y_2\}$ is an edge of~$H$, or $y_1=y_2$ and $\{x_1,x_2\}$ is an edge of $G$. Then,  magnitude homology is known to  satisfy a K\"unneth formula with respect to the Cartesian product of graphs~\cite[Theorem~5.3]{hepworth2015categorifying}. On the other hand, this is not true for eulerian magnitude homology:

\begin{remark}\label{rem:Kunneth}
    Eulerian magnitude homology does not satisfy K\"unneth. In fact, let~$I_2$ be the linear undirected graph with two vertices $x_0,x_1$ and an undirected edge $\{x_0,x_1\}$. Its eulerian magnitude homology is non-trivial only in bidegrees $(k,l)=(0,0),(1,1)$, in which cases it is of rank 2. The Cartesian product $I_2\times I_2$ of~$I_2$ with itself is the cycle graph~$C_4$ on four vertices, which has  non-trivial eulerian magnitude homology in bidegree $(k,l)=(3,5)$. By dimensionality, the generators in bidegree $(3,5)$ can not come from generators of $\EMH_{*,*}(I_2)\otimes \EMH_{*,*}(I_2)$. 
\end{remark}

Despite the lack of formal properties such as a K\"unneth theorem, magnitude homology and eulerian magnitude homology behave quite alike; for example, the behaviour of the torsion is similar. It was first shown by Kaneta and Yoshinaga~\cite{zbMATH07381918}, and then by Sazdanovic and Summers~\cite{zbMATH07370433}, that magnitude homology contains torsion. Analogously,  torsion appears in eulerian magnitude homology as well:

\begin{proposition}\label{prop:torsion}
    Let $A$ be any finitely generated Abelian group. Then, there exists a graph $G$ whose eulerian magnitude homology $\EMH(G;\mathbb{Z})$ contains a subgroup isomorphic to $A$.
\end{proposition}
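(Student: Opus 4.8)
The plan is to reduce to cyclic groups and then to assemble the pieces using the splitting of $\EMC_{\ast,\ell}$ recorded above. By the structure theorem, write $A\cong \mathbb{Z}^{r}\oplus\bigoplus_i \mathbb{Z}/m_i$ (and, if convenient, split the $\mathbb{Z}/m_i$ into prime powers by the Chinese remainder theorem). It then suffices to produce, for each summand, a connected graph whose eulerian magnitude homology contains that summand, and to combine them. For the combination I would take the disjoint union $G=\bigsqcup_i G_i$: since vertices in different components lie at infinite distance, every tuple of finite length is supported in a single component, so the proposition on the splitting of $\EMC_{\ast,\ell}(G)$ gives $\EMH_{k,\ell}(G)\cong\bigoplus_i \EMH_{k,\ell}(G_i)$ for $\ell>0$, and hence $A$ appears as a subgroup of $\EMH(G)$. (Connectivity, if desired, is a secondary point that can be handled by joining the $G_i$ with long paths.)

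The free part is immediate: a single edge $I_2$ has $\EMH_{1,1}(I_2)$ free of rank two, since $\EMC_{0,1}(I_2)=0$ and the differential on $1$-chains vanishes, so each $\mathbb{Z}$ summand is realized by a copy of $I_2$. Thus the whole problem reduces to realizing a single cyclic group $\mathbb{Z}/m$ inside the eulerian magnitude homology of a connected graph, which is the heart of the matter.

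To realize $\mathbb{Z}/m$, I would follow the strategy developed by Kaneta--Yoshinaga and Sazdanovic--Summers for ordinary magnitude homology: first realize $\mathbb{Z}/m$ as a direct summand of the reduced homology of an explicit finite simplicial complex $K$, and then engineer a graph $G$, a bidegree $(k,\ell)$, and endpoints $a,b$ so that the endpoint-summand $\EMC_{\ast,\ell}(a,b)$ computes $\widetilde{H}_{\ast}(K)$ up to a degree shift. The eulerian constraint means that $\EMC_{\ast,\ell}(a,b)$ is generated by tuples of \emph{pairwise distinct} vertices, that is, by injective paths of prescribed length from $a$ to $b$; the task is to distribute the edges and distances of $G$ so that these injective paths, together with the magnitude differential, reproduce the simplicial chain complex of $K$.

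The main obstacle is exactly this last step, and it is strictly harder than its magnitude counterpart. One cannot simply import the Sazdanovic--Summers computation, because eulerian magnitude homology does not embed into magnitude homology: the non-eulerian chains present in $\MC_{\ast,\ell}$ -- already the backtracking tuples such as $(a,b,a)$, which exist as soon as $G$ has an edge -- alter the homology. The construction must therefore be eulerian from the start, arranging that the desired torsion class is carried entirely by injective paths and is annihilated by no eulerian boundary. This is where the input from the discussion with Sazdanovic and Martin enters: I expect their torsion construction, read through the injective-path model that underlies eulerian magnitude homology (and which the paper later makes precise via the complex of injective words on a graph), to supply the explicit graphs realizing each $\mathbb{Z}/m$, thereby completing the proof.
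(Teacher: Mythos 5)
Your overall strategy---reduce to realizing the homology of a simplicial complex via the Kaneta--Yoshinaga / Sazdanovic--Summers machinery---is the right one, and it is essentially the route the paper takes. But your proposal stops exactly where the proof has to happen. You yourself call the realization of $\mathbb{Z}/m$ inside eulerian magnitude homology ``the heart of the matter'', and then, instead of proving it, you defer it: you ``expect'' an unspecified torsion construction from the Sazdanovic--Martin discussion to supply the explicit graphs. That expectation \emph{is} the content of the proposition; nothing in your text verifies that any known construction survives the eulerian constraint, so the key step is missing, not merely compressed.

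The missing idea is also not the harder, purpose-built eulerian construction you predict, but a one-line inspection of the existing one. In the Kaneta--Yoshinaga construction, the generators of the relevant summand $\MC_{*,\ell}(a,b)$ come from strictly increasing chains in a ranked poset (capped by the two extremal vertices), so their vertices are automatically pairwise distinct: the subcomplex realizing the homology of the pure simplicial complex $K$ already consists of eulerian chains, i.e.\ it lies in $\EMC_{*,\ell}(a,b)$, and the endpoint splitting holds for $\EMC$ just as for $\MC$. Your worry that one ``cannot simply import'' the Sazdanovic--Summers computation because $\EMH$ does not embed in $\MH$ is therefore misplaced: no comparison map between the two homology theories is needed, only the observation that the particular chains used are injective on vertices, after which the argument of Sazdanovic--Summers applies verbatim. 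This is exactly how the paper argues. (Your preliminary reductions---the structure theorem, disjoint unions, and the $I_2$ realization of free summands---are harmless but unnecessary once this observation is in place; note also that the paper's standing convention is that graphs are connected, so the disjoint-union assembly would need the patching you only sketch.)
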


\begin{proof}
    Inspection of the proof of \cite[Theorems~5.11 \& Corollary~5.12] {zbMATH07381918} (or, equivalently, the results of~\cite[Section~7]{arXiv:2405.04748}), shows that  the homology of a ranked poset embeds in eulerian magnitude homology (of directed graphs). As a consequence, the homology of any pure simplicial complex $K$ embeds in eulerian magnitude homology via the  Kaneta-Yoshinaga construction. Then, the same arguments of  Sazdanovic and Summers~\cite[Theorem~3.14]{zbMATH07370433}  apply, concluding the proof.
\end{proof}

A morphism of (directed) graphs $\phi\colon G_1\to G_2$ is a function $\phi\colon V(G_1)\to V(G_2)$ such that if $e\in E(G_1)$ then $\phi(e)\in E(G_2)$. Hence, a morphism of (directed) graphs   sends (directed) edges to (directed) edges and, in particular, it does not allow collapsing. A morphism of (directed) graphs is called \emph{regular} if it is
injective (as a function). Directed graphs and regular morphisms form a category that we denote by
$\mathbf{Digraph}$. Then, we have the analog of \cite[Prop.~3.3]{hepworth2015categorifying}, for family of regular morphisms of digraphs (opposed to contractions in~\cite[Prop.~3.3]{hepworth2015categorifying}):

\begin{proposition}\label{prop:functoriality}
    Eulerian magnitude homology is a functor on $\mathbf{Digraph}$. 
\end{proposition}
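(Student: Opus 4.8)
The plan is to verify the two conditions required for functoriality: that a regular morphism $\phi\colon G_1\to G_2$ of digraphs induces a well-defined chain map on eulerian magnitude chains, and that this assignment respects identities and composition. Since the second condition is immediate once the first is established (the induced map is defined entrywise on tuples, so it is strictly functorial), the real content lies in the first.

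First I would define, for a regular morphism $\phi\colon G_1\to G_2$, the candidate map on generators by $\phi_*(x_0,\dots,x_k)\coloneqq(\phi(x_0),\dots,\phi(x_k))$, extended $R$-linearly. The key point is to check that this lands in $\EMC_{k,\ell}(G_2)$, i.e.\ that the image tuple is again an \emph{eulerian} tuple of the \emph{same} length $\ell$. Here the injectivity of $\phi$ is exactly what guarantees that the distinctness condition $x_i\neq x_j$ for all $i,j$ is preserved: if the $x_i$ are pairwise distinct, then so are the $\phi(x_i)$. For the length, I would argue that a graph morphism is $1$-Lipschitz with respect to the path metric -- sending edges to edges implies $d_{G_2}(\phi(x),\phi(y))\le d_{G_1}(x,y)$ -- so that $\len(\phi(x_0),\dots,\phi(x_k))\le\len(x_0,\dots,x_k)=\ell$. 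This is the crux: naively a morphism can only decrease length, so to preserve the bigrading $(k,\ell)$ one must handle the case where the length strictly drops. The natural remedy, as in \cite[Prop.~3.3]{hepworth2015categorifying}, is to set $\phi_*(x_0,\dots,x_k)\coloneqq 0$ whenever $\len(\phi(x_0),\dots,\phi(x_k))<\ell$, so that $\phi_*$ preserves both $k$ and $\ell$ by construction.

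Next I would check that $\phi_*$ commutes with the differential $\partial$. This is a diagram-chase comparing $\partial_i$ applied before and after $\phi$. The subtlety is bookkeeping the zero conventions: the face map $\partial_i$ is set to $0$ precisely when omitting the $i$-th entry strictly decreases the length, and $\phi_*$ is set to $0$ precisely when applying $\phi$ strictly decreases the length. Using that $\phi$ is $1$-Lipschitz, one verifies that for each $i$ the length-preservation conditions on the two sides are compatible, so $\partial_i\phi_* = \phi_*\partial_i$ holds termwise (with both sides vanishing in the exceptional cases). Summing over $i$ with the signs $(-1)^i$ gives $\partial\phi_* = \phi_*\partial$.

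The main obstacle I anticipate is the careful verification at the boundary between the length-preserving and length-dropping regimes in the chain-map check -- specifically, ensuring that the two independent ``set to zero'' rules (one from the face map, one from $\phi_*$) interact consistently, so that no term is erroneously retained on one side but killed on the other. This is essentially the same bookkeeping that underlies \cite[Lemma~2.11]{hepworth2015categorifying} and \cite[Prop.~3.3]{hepworth2015categorifying}, and I would organize the argument to reduce to those computations: the only genuinely new ingredient relative to the magnitude-homology case is the preservation of the eulerian (all-distinct) condition, which is handled cleanly by injectivity, whereas the length and differential arguments transport verbatim from the magnitude setting once one restricts to regular morphisms.
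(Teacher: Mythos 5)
Your proposal is correct and follows essentially the same route as the paper: the paper defines $f_*$ on tuples with exactly your zero convention (image tuple if the length is preserved, $0$ otherwise), invokes injectivity to preserve the all-distinct condition, and then asserts compatibility with the differential and with identities/compositions. The only difference is one of detail — the paper leaves the Lipschitz/length bookkeeping and the chain-map check implicit, whereas you spell them out, which is a faithful expansion rather than a different argument.
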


\begin{proof}
    For a regular morphism $f\colon G'\to G$, we let $f_*$ be the  map defined on tuples of $G'$ as 
    \[
    f_*(x_0,\dots,x_k)=
    \begin{cases}
        (f(x_0),\dots,f(x_k))  & \text{ if } \ell(f(x_0),\dots,f(x_k))=\ell(x_0,\dots,x_k)\\
        0 & \text{ otherwise }.
    \end{cases}
    \]
    Since $f$ is injective on the vertices, $f_*$ induces a morphism on the eulerian magnitude chains, hence in eulerian magnitude homology. Furthermore, it respects the identy and compositions of regular morphisms, yielding a functor on $\mathbf{Digraph}$.
\end{proof}

We observe here that the functoriality in Proposition~\ref{prop:functoriality} can not be extended to the family of contractions. To see it, consider the undirected cyclic graph $C_4$ on vertices $\{x_0,x_1,x_2,x_3\}$. 
Consider the contraction $c\colon C_4\to C_3$ of the edge $\{x_2,x_3\}$ of~$C_4$. As $c_*(x_0,x_1,x_2,x_3)=(c(x_0),c(x_1),c(x_2),c(x_3))$, the map $c_*$ can not be non-zero -- as the tuple $(c(x_0),c(x_1),c(x_2),c(x_3))$ would have repetitions.  Then, we must have $c_*(x_0,x_1,x_2,x_3)=0$. Likewise, we must have $c_*(x_0,x_2,x_3)=c_*(x_1,x_2,x_3)=0$. However, this extension to zero does not yield a chain map: 
\[
\partial (c_*(x_0,x_1,x_2,x_3))=0\neq c_*(\partial(x_0,x_1,x_2,x_3))
\]
as $c_*(\partial(x_0,x_1,x_2,x_3))=(c(x_0),c(x_1),c(x_3))-(c(x_0),c(x_1),c(x_2)) $.

We provide some examples of computation of eulerian magnitude homology. 

\begin{example}
    For any directed graph $G$, the rank of $\EMH_{0,0}(G)$ is the number of vertices of $G$, and the rank of $\EMH_{1,1}(G)$ is the number of directed edges in $G$. 
\end{example}

\begin{example}\label{ex:complete}
    The complete undirected graph $K_n$ on $n\geq 1$ vertices has eulerian magnitude homology of rank $$\mathrm{rk}\, \EMH_{k,k}(K_n))=\frac{n!}{(n-(k+1))!} \ , $$ for $k< n$. The groups $\EMH_{k,l}(K_n)$ are trivial for all $k\neq l$.
\end{example}

\begin{example}\label{ex:linear}
     Let $I_n$ be the linear undirected graph on $n$ vertices.  Then, a direct computation shows that the maximum index $\ell$ for which  $\EMH_{k,\ell}(I_n)\neq 0$ is $\ell=\frac{n(n-1)}{2}$, achieved for $k=n-1$.
\end{example}

\begin{example}
Let $G=I^n_2$ be the $n$-fold Cartesian product of the graph $I_2$ with itself. 
A direct computation shows that maximum index $\ell$ for which $\EMH_{k,\ell}(I^n_2)\neq 0$ is the maximum total Hamming distance between pairs of consecutive elements in any permutation of all $2^n$ binary words of length $n$ -- see the \href{https://oeis.org/search?q=1%2C5%2C18%2C53&language=english&go=Search}{A271771} sequence.
\end{example}

\begin{example}
    \label{ex:rank cycles}
    Let $C_n$ be the cycle (undirected) graph on $n$ vertices. Then, the maximum index $\ell$ for which $\EMH_{k,\ell}(C_n)$ is non-trivial is 
    \[
    \ell(n)=\begin{cases}
        \frac{n^2}{2} -n+1 & \text{ if } n \text{ is even}\\
        \frac{(n-1)^2}{2}& \text{ if } n \text{ is odd}
    \end{cases}
    \]
    and it is achieved in bidegree $(n-1,\ell(n))$.
Furthermore,  for every  $(k,\ell)$ it holds that $\EMH_{k,\ell}(C_n)$ is upper bounded by 
    \[
    2\cdot \binom{n}{k+1}\binom{\ell -1}{k} \ ,
    \]
    since we can choose $k+1$ different vertices of the tuple in $\binom{n}{k+1}$ different ways and we can arrange them clockwise or counterclockwise, and the number of ways to distribute $\ell$ steps across $k+1$ different vertices is $\binom{\ell -1}{k}$. 
\end{example}

We conclude the section with recalling the notion of path homology~\cite{zbMATH06435208}, which is strictly related to magnitude homology as proven by Asao~\cite{zbMATH07731261}.

We consider a directed graph $G$ on the vertex set~$V(G)$. For $n\geq 0$, an \emph{elementary path} is an ordered sequence $(x_0,\dots,x_n)$ of vertices of~$G$. We let $\Lambda_n(G)$ be the module freely generated by the $n$-elementary paths of $G$. There is a differential $\partial \colon \Lambda_n(G)\to \Lambda_{n-1}(G) $ as the alternating sums of the face maps forgetting the $i$-th entry of the elementary paths. An elementary path $(x_0,\dots,x_n)$ is called \emph{allowed} if $(x_i,x_{i+1})$ is a directed edge of $G$ for all $i=0,\dots, n-1$. 

\begin{definition}
    An elementary path $(x_0,\dots,x_n)$ is called \emph{regular} if $x_i\neq x_{i+1}$ for all $i=0,\dots n-1$. It is called \emph{strongly regular} if the vertices $x_i$ are all distinct.
\end{definition}

Note that the notion of strongly regular elementary paths was used in~\cite{zbMATH06545866,arXiv:2208.14063} with the name of regular paths. 

Let $\cA_n(G)$ be the module freely generated by the regular allowed elementary $n$-paths of $G$. Observe that the boundary operator $\partial$ does not generally yield a differential on $\cA_*(G)$. Therefore, we consider the submodule $\Omega_n(G)$ of $\cA_n(G)$ on the $\partial$-invariant regular
allowed elementary $n$-paths. Then, $(\Omega_*(G),\partial_*)$ is a chain complex. Its homology groups are called the \emph{path homology} groups of $G$~\cite{zbMATH06435208}. Likewise, we let $\Omega_n^\iota(G)$ be the submodule of  $\cA_n(G)$ on the $\partial$-invariant strongly regular
allowed elementary $n$-paths. The pair $(\Omega_*^\iota(G),\partial_*)$ is also a chain complex~\cite{zbMATH06545866,arXiv:2208.14063}. 

\begin{definition}
    The homology groups of the chain complex $(\Omega_*^\iota(G),\partial_*)$ are the \emph{regular path homology} groups of $G$.
\end{definition}

The condition of taking strongly regular paths has interesting consequences on the associated path homology groups. First, observe that the non-trivial regular path homology groups are always finite. Secondly, Fu and Ivanov showed in \cite{arXiv:2407.17001} that path homology of graphs is not simplicial. On the other hand, regular path homology is the homology of a CW-complex by~\cite{zbMATH06545866}. Furthermore, as remarked in~\cite{arXiv:2208.14063} a Lefschetz fixed point theorem only holds for regular path homology.

\section{Regularly diagonal graphs}
\label{sec:regularly diag graphs}

In this section we focus on the \emph{diagonality} properties of eulerian magnitude homology of undirected graphs, providing a  characterization of complete graphs in terms of  eulerian magnitude homology.

Recall that a graph is called \emph{diagonal} if its magnitude homology is concentrated on the diagonal, that is $\MH_{k,l}(G)=0$ for $k\neq l$~\cite[Definition~7.1]{hepworth2015categorifying}. Diagonality of graphs was shown to be connected to relevant properties of the graphs at hand~\cite{hepworth2015categorifying, bottinelli2020magnitude, asao2024girth, arXiv:2405.04748}. Furthermore, it is preserved under taking Cartesian products of graphs~\cite[Proposition~7.3]{hepworth2015categorifying} and any non-trivial join of graphs is diagonal~\cite[Theorem~7.5]{hepworth2015categorifying}.

\begin{example}\label{ex:treesdiagonal}
    Trees and complete undirected graphs are diagonal.
\end{example}

Recall that the girth of a graph $G$ is the minimum
length of cycles in $G$.

\begin{proposition}[{\cite[Corollary~1.6]{asao2024girth}}]
  Let $G$ be a diagonal graph which is not a tree. Then,   the girth of $G$ is either 3 or 4.
\end{proposition}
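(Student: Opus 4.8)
The plan is to argue by contraposition: assuming $G$ is not a tree and has girth $\gir(G)=g\ge 5$, I will exhibit a non-zero off-diagonal magnitude homology group, contradicting diagonality. Since the construction is localized at a shortest cycle, the first step is to record that such a cycle is isometrically embedded. Fix a cycle $C=(v_0,v_1,\dots,v_{g-1})$ of length $g$; I claim $d_G(v_i,v_j)$ equals the cyclic distance along $C$. If some path $Q$ in $G$ were a strict shortcut between two cycle vertices, then concatenating $Q$ with the shorter arc $A$ of $C$ between them would give a closed walk of length at most $2\lfloor g/2\rfloor-1<g$. As $G$ has girth $g$, this walk spans no cycle, so its edges form a forest and it must traverse every edge an even number of times; then $Q$ would have to traverse each of the distinct edges of $A$ at least once, forcing $\len(Q)\ge\len(A)$ and contradicting that $Q$ is strictly shorter. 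This establishes the isometry.

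The heart of the argument is a description of magnitude homology in \emph{geodesic length}. For a pair $(a,b)$ with $d_G(a,b)=d$, let $P_{a,b}$ be the poset of vertices $v\notin\{a,b\}$ on some geodesic from $a$ to $b$, ordered by geodesic betweenness ($u\preceq v$ iff $d(a,u)+d(u,v)+d(v,b)=d$). A generator of $\MC_{k,d}(a,b)$ is exactly a geodesic $a=x_0,\dots,x_k=b$, whose interior $x_1,\dots,x_{k-1}$ spans a $(k-2)$-simplex of the order complex $\Delta(P_{a,b})$; since deleting an interior vertex of a geodesic preserves length, the magnitude differential coincides with the augmented simplicial boundary. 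Hence $\MC_{*,d}(a,b)$ is the shifted augmented chain complex of $\Delta(P_{a,b})$, yielding a natural isomorphism $\MH_{k,d}(a,b)\cong\tilde H_{k-2}(\Delta(P_{a,b}))$.

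I apply this to the antipodal pair $a=v_0$, $b=v_m$ with $m=\lfloor g/2\rfloor$. When $g=2m$ is even, isometry gives $d(v_0,v_m)=m$ and turns both arcs of $C$ into geodesics, so $P_{v_0,v_m}$ contains the two interior chains $I_1=\{v_1,\dots,v_{m-1}\}$ and $I_2=\{v_{m+1},\dots,v_{2m-1}\}$. The key point is that $I_1$ and $I_2$ lie in distinct connected components of $\Delta(P_{v_0,v_m})$: any comparability fence joining them would splice two distinct geodesics with common endpoints, whose union contains a cycle of length at most $2\,d(v_0,v_m)=g$, and minimality of the girth would force these geodesics to share all intermediate vertices, contradicting that one meets $I_1$ and the other meets $I_2$. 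Therefore $\Delta(P_{v_0,v_m})$ is disconnected and $\MH_{2,m}(v_0,v_m)\cong\tilde H_0\neq 0$; as $m\ge 3$ this is off the diagonal, so $G$ is not diagonal.

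It remains to treat odd girth $g=2m+1$, which is where I expect the main difficulty. Now $C$ carries no pair with two equal-length geodesics, so the clean poset model does not apply directly; instead the relevant class lives one unit above the distance, in $\MH_{2,\,m+1}(v_0,v_m)$ (note $m+1=\lceil g/2\rceil>2$, hence off-diagonal). Here the generators are the ``almost-geodesic'' connectors $(v_0,v_{m+k},v_m)$, $1\le k\le m$, obtained by running partway along the long arc, and the class represents precisely that long arc as a relative $1$-cycle. The obstacle is to show it survives: one must verify that the relations come only from three-step refinements supported on $C$ and that no filler using a vertex off $C$ can bound it — again reducing, through the isometric embedding, to the non-existence of cycles shorter than $g$. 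I would carry this out by the same mechanism as the even case, either by a direct chain-level computation showing the rank is exactly one, or by dualizing and building an explicit magnitude cocycle supported on the long arc that pairs non-trivially with the class while annihilating every boundary; the girth hypothesis is exactly what makes such a cocycle well defined.
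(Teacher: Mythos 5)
First, a point of comparison: the paper does not prove this proposition at all — it is quoted from Asao's work, \cite[Corollary~1.6]{asao2024girth}, where it is deduced from the non-vanishing theorem \cite[Theorem~1.5]{asao2024girth}, which asserts that $\MH_{2,\lceil \gir(G)/2\rceil}(G)\neq 0$ whenever $\gir(G)\geq 5$. Your contrapositive strategy is exactly that statement, so you are in effect reproving Asao's theorem from scratch. The half you actually carry out is correct: the isometric embedding of a shortest cycle, the identification $\MH_{k,d}(a,b)\cong\tilde{H}_{k-2}(\Delta(P_{a,b}))$ when $d=d_G(a,b)$, and the even-girth argument are all sound. (The clean way to phrase your ``fence'' step: any two distinct geodesics from $v_0$ to $v_m$ must be \emph{internally disjoint}, since sharing an interior vertex would produce a cycle of length $<g$; hence interior vertices of distinct geodesics are incomparable in $P_{v_0,v_m}$, so $\Delta(P_{v_0,v_m})$ is a disjoint union of at least two simplices, giving $\tilde{H}_0\neq 0$ and thus $\MH_{2,g/2}(G)\neq 0$ with $g/2\geq 3$.)

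The genuine gap is the odd-girth case, which you explicitly leave as a plan rather than a proof (``the obstacle is to show it survives\dots I would carry this out\dots''). This is not a minor parity issue: $g=5$ is odd, so as written your argument does not rule out diagonal graphs of girth five — it does not even dispose of $C_5$. The difficulty is real: since $\ell=m+1>d_G(v_0,v_m)=m$, the face $\partial_1$ vanishes on every generator $(v_0,w,v_m)$ of $\MC_{2,m+1}(v_0,v_m)$, so every $2$-chain is a cycle and $\MH_{2,m+1}(v_0,v_m)$ is the cokernel of $\partial\colon\MC_{3,m+1}(v_0,v_m)\to\MC_{2,m+1}(v_0,v_m)$; to see that the long-arc class survives, you must control \emph{all} $3$-chains $(v_0,x,y,v_m)$ of length $m+1$, including those whose intermediate vertices lie off the cycle $C$, and no poset model is available to organize them. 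Neither of your two proposed routes is executed, and the first (``the rank is exactly one'') is not even the right target: for two $5$-cycles glued along a path of length two, the relevant group has rank two. What is needed is a lower bound, e.g.\ the cocycle you describe, and constructing it — which is precisely the content of \cite[Theorem~1.5]{asao2024girth} — is the part of the proof that is missing.
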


The notion of diagonality for graphs extends to digraphs~\cite{arXiv:2405.04748} in the obvious way, and, in analogy with the definition of diagonal (directed) graphs, we can consider graphs with diagonal eulerian magnitude homology. 

\begin{definition}
    A (directed) graph $G$ is called \emph{regularly diagonal} if $\EMH_{k,l}(G)=0$ for all $k\neq l$.
\end{definition}

Some first properties of diagonality easily follow. For a directed graph~$G$, denote by $G^{\op}$ the directed graph with the same vertices of $G$ and with edges of $G$ oriented in the opposite direction.

\begin{lemma}
    A directed graph $G$ is (regularly) diagonal if and only if $G^\op$ is (regularly) diagonal.
\end{lemma}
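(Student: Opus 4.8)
The plan is to exhibit a natural isomorphism between the eulerian magnitude chain complexes of $G$ and $G^{\op}$ that commutes with the differential, from which the claim for both standard diagonality (via the same argument applied to $\MC$) and regular diagonality follows immediately. The key observation is that reversing all edge orientations corresponds to reversing tuples. Concretely, I would define, for each fixed length $\ell$ and each homological degree $k$, the $R$-linear map
\[
r_k\colon \EMC_{k,\ell}(G)\to \EMC_{k,\ell}(G^{\op}), \qquad r_k(x_0,\dots,x_k)\coloneqq (x_k,\dots,x_0),
\]
sending a tuple to its reversal. The first step is to check that this is well-defined, i.e.\ that $(x_k,\dots,x_0)$ is a legitimate generator of $\EMC_{k,\ell}(G^{\op})$: the vertices remain pairwise distinct, and since $d_{G^{\op}}(x_{i+1},x_i)=d_G(x_i,x_{i+1})$ (a directed path from $x_i$ to $x_{i+1}$ in $G$ is exactly a directed path from $x_{i+1}$ to $x_i$ in $G^{\op}$), the length is preserved, so $\len_{G^{\op}}(x_k,\dots,x_0)=\len_G(x_0,\dots,x_k)=\ell$. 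Since $r_k$ sends basis elements bijectively to basis elements, it is an isomorphism of $R$-modules.

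The main computational step is to verify that $r_*$ is a chain map \emph{up to sign}, and to pin down that sign. Reversal sends the $i$-th face $\partial_i$ (deletion of $x_i$) to the $(k-i)$-th face on the reversed tuple, and one must track how the signs $(-1)^i$ in Eq.~\eqref{def:differential} transform under the index substitution $i\mapsto k-i$ together with the global reversal of the tuple. I would compute $r_{k-1}\circ\partial_{k,\ell}$ and $\partial_{k,\ell}\circ r_k$ on a generator $(x_0,\dots,x_k)$, matching terms indexed by the deleted vertex, and show they agree after multiplying by a degree-dependent sign $\varepsilon_k\in\{\pm 1\}$ (the usual $(-1)^{\binom{k}{2}}$ or $(-1)^{\lfloor k/2\rfloor}$ arising from reversing a simplex). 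Rescaling $r_k$ by $\varepsilon_k$ then produces a genuine chain isomorphism $\EMC_{*,\ell}(G)\xrightarrow{\ \cong\ }\EMC_{*,\ell}(G^{\op})$ for every $\ell$; one checks the rescaled maps still commute with $\partial$, which is a one-line consistency check on the ratio $\varepsilon_{k-1}/\varepsilon_k$ against the sign discrepancy found above.

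Passing to homology yields $\EMH_{k,\ell}(G)\cong\EMH_{k,\ell}(G^{\op})$ for all $k,\ell$, and the biconditional in the lemma is then immediate: $\EMH_{k,\ell}(G)=0$ for all $k\neq\ell$ if and only if the same holds for $G^{\op}$. The identical argument applied to the full magnitude chain complex $\MC_{*,\ell}$ (where reversal is likewise well-defined because the length condition and the face-vanishing condition are symmetric) gives the standard-diagonality half of the statement, explaining the parenthetical ``(regularly)''. The only real obstacle is bookkeeping: getting the reversal sign $\varepsilon_k$ correct so that $r_*$ is an honest chain map rather than merely a chain map up to an unaccounted sign. This is entirely routine but is the one place where a careless index shift would break the argument, so I would state the sign explicitly and verify the single relation $\varepsilon_{k-1}=(-1)^{?}\,\varepsilon_k$ needed for commutativity before invoking it.
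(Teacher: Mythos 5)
Your proposal is correct and follows essentially the same route as the paper: the paper's proof uses exactly this tuple-reversal map $(x_0,\dots,x_k)\mapsto(x_k,\dots,x_0)$, citing an external lemma for the fact that it induces an isomorphism on (eulerian) magnitude homology, whereas you carry out the well-definedness and sign verification by hand. Your sign analysis is sound (the needed relation is $\varepsilon_{k-1}=(-1)^k\varepsilon_k$, solved by $\varepsilon_k=(-1)^{\binom{k+1}{2}}$; note also that since diagonality only concerns vanishing of homology, a chain map up to sign already suffices), so the argument goes through.
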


\begin{proof}
    Consider the map
    \[
    (x_0,\dots,x_k)\mapsto (x_k,\dots, x_0)
    \]
    sending a tuple of $G$ to the tuple in $G^\op$ with same vertices and opposite order. Then, this map induces an isomorphism of magnitude homology groups~\cite[Lemma~6.3]{arXiv:2405.04748}, and, in the same way, an isomorphism of eulerian magnitude homology groups. 
\end{proof}

\begin{lemma}\label{lem:diagvsregdiag}
    Let $G$ be a directed graph without directed cycles. Then, $G$ is diagonal if and only if it is regularly diagonal.  
\end{lemma}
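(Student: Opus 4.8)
The plan is to show that, for a directed graph $G$ without directed cycles, the eulerian magnitude chain complex coincides \emph{on the nose} with the full magnitude chain complex, so that $\EMH_{k,l}(G)=\MH_{k,l}(G)$ for all $k,l$. The equivalence of diagonality and regular diagonality is then immediate, since both are conditions on the very same bigraded homology groups, and in particular the \emph{if and only if} costs nothing once the complexes are identified.

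First I would observe that the absence of directed cycles makes reachability a \emph{strict partial order} on $V(G)$: writing $x\prec y$ whenever $x\neq y$ and there is a directed path from $x$ to $y$, transitivity follows by concatenating paths, while irreflexivity and antisymmetry follow from the hypothesis, since $x\prec x$, or $x\prec y\prec x$ with $x\neq y$, would produce a directed cycle in $G$. (Note this also rules out loops, which would be directed cycles of length one.) This is exactly the reachability preorder $\Pre(G)$ specializing to an honest partial order in the acyclic case.

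Next I would use this to compare the two sets of generators. By Remark~\ref{rem:MHdirected}, a generator $(x_0,\dots,x_k)$ of $\MC_{k,l}(G)$ of finite length $l$ satisfies $x_i\neq x_{i+1}$ and $d_G(x_i,x_{i+1})<\infty$ for each $i$, hence $x_0\prec x_1\prec\dots\prec x_k$. By transitivity $x_i\prec x_j$ whenever $i<j$, and in particular $x_i\neq x_j$; thus every generator of $\MC_{k,l}(G)$ already has pairwise distinct vertices and is therefore a generator of $\EMC_{k,l}(G)$. The reverse inclusion $\EMC_{k,l}(G)\subseteq\MC_{k,l}(G)$ holds by definition, so the two chain groups agree in every bidegree.

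Finally I would note that the eulerian differential is, by construction, the restriction of the magnitude differential of Eq.~\eqref{def:differential}, so the identification of chain groups is automatically compatible with the differentials and yields an equality of chain complexes $\EMC_{*,l}(G)=\MC_{*,l}(G)$ for every $l$. Passing to homology gives $\EMH_{k,l}(G)=\MH_{k,l}(G)$, whence $G$ is diagonal precisely when it is regularly diagonal. The only point that needs genuine care -- and the closest thing to an obstacle -- is the verification that a magnitude generator of \emph{finite} length forces a $\prec$-chain; once the partial-order structure is in place, ruling out repeated vertices is purely formal, and the rest of the argument is bookkeeping.
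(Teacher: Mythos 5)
Your proposal is correct and takes essentially the same approach as the paper: the paper's proof consists precisely of the observation that, in the absence of directed cycles, $\EMC_{*,l}(G)=\MC_{*,l}(G)$, from which the equivalence is immediate. Your write-up simply supplies the details (reachability becoming a strict partial order, hence transitivity forcing all vertices of a magnitude generator to be distinct) that the paper leaves implicit.
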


\begin{proof}
    If $G$ has no directed cycles, then the eulerian magnitude chain complex of $G$ is the magnitude chain complex of $G$, and the result follows.
\end{proof}

\begin{remark}
    If $G$ has non-trivial diagonal (eulerian) 
    magnitude homology only in bidegree $(k,k)=(0,0)$ then $G$ is the totally disconnected graph.
\end{remark}

By Lemma~\ref{lem:diagvsregdiag}, the family of regularly diagonal graphs which are not diagonal have to include graphs with directed cycles. 
Diagonality is strictly related to the presence of cycles, and  the next result, which follows from \cite[Theorem~1.5]{asao2024girth}, shows that the bigger the girth of the graph, the further from the diagonal the eulerian magnitude homology can be concentrated. 

\begin{lemma}\label{lem:regdiagandgirth}
    Let $G$ be an undirected regularly diagonal graph. Then the girth $\mathrm{gir}(G)$ of $G$ is either $3$ or $4$.
\end{lemma}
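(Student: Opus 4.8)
The plan is to reduce the statement to the known magnitude-homology result \cite[Theorem~1.5]{asao2024girth}, which controls the ``reach'' of ordinary magnitude homology off the diagonal in terms of the girth. The key observation is that for an undirected graph the difference between $\MH$ and $\EMH$ comes only from tuples with repeated vertices, and such repetitions are controlled by the existence of short cycles. Concretely, I would argue that if $\gir(G)=g$, then ordinary magnitude homology $\MH_{k,\ell}(G)$ is non-trivial in some off-diagonal bidegree determined by $g$; the task is to promote this to a statement about $\EMH$.

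First I would recall the cited result in the form we need: for a graph $G$ that is not a tree, with $\gir(G)=g\geq 5$, magnitude homology is non-diagonal, with a non-trivial class appearing in a bidegree $(k,\ell)$ with $\ell>k$ governed by the girth (roughly, a geodesic cycle of length $g$ produces a class in bidegree related to $g$). The point of \cite{asao2024girth} is precisely that large girth forces magnitude homology to live far from the diagonal. I would then invoke that the minimal off-diagonal non-trivial class constructed there can be represented by a tuple that traverses a shortest cycle; since a shortest cycle visits \emph{distinct} vertices (a geodesic cycle has no repeated vertices), the relevant generating tuple already lies in the eulerian magnitude chain complex $\EMC_{*,*}(G)$.

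The heart of the argument, and the step I expect to be the main obstacle, is to show that this off-diagonal class survives in \emph{eulerian} magnitude homology, i.e. that the non-triviality in $\MH$ transfers to non-triviality in $\EMH$ even though $\EMH$ does not embed in $\MH$ (as flagged in the excerpt and in \cite{arXiv:2403.09248}). The cleanest route is a degreewise comparison: in the relevant low bidegree $(k,\ell)$ with $\ell$ close to the girth, I would show that every tuple of length $\ell$ contributing to the cycle automatically has distinct vertices, so that $\EMC_{k,\ell}(G)=\MC_{k,\ell}(G)$ in that range (repetitions would force $\ell$ larger than the girth permits). If the chain groups coincide in the surrounding bidegrees, the differentials coincide and hence $\EMH_{k,\ell}(G)\cong \MH_{k,\ell}(G)\neq 0$ for that off-diagonal $(k,\ell)$, contradicting regular diagonality.

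Putting this together, the contrapositive gives the claim: if $G$ is undirected regularly diagonal and not a tree, then $\gir(G)$ cannot be $\geq 5$, so $\gir(G)\in\{3,4\}$; and if $G$ is a tree its girth is infinite, but then by Lemma~\ref{lem:diagvsregdiag} (a tree has no directed cycles after the $\rho$-construction, being bipartite with the relevant acyclicity) one handles the degenerate case separately or excludes it as having no cycles. I would therefore structure the proof as: (i) assume $\gir(G)\geq 5$; (ii) produce a non-trivial off-diagonal class in $\MH$ via \cite[Theorem~1.5]{asao2024girth}; (iii) verify the supporting chain groups are eulerian in the relevant bidegrees, so the class persists in $\EMH$; (iv) conclude this contradicts regular diagonality, forcing $\gir(G)\in\{3,4\}$.
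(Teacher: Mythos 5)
Your overall route is the same as the paper's: the paper offers no proof of this lemma beyond asserting that it ``follows from \cite[Theorem~1.5]{asao2024girth}'', and your proposal is an attempt to make that reduction explicit, which is the right thing to do. The problem is that the one substantive step you add -- the transfer of non-triviality from $\MH$ to $\EMH$ -- is wrong as you state it. You claim that in the relevant bidegrees $\EMC_{k,\ell}(G)=\MC_{k,\ell}(G)$ because ``repetitions would force $\ell$ larger than the girth permits''. That is false: generators of magnitude chains require only \emph{consecutive} entries to be distinct (and at finite distance), not adjacent, so repetitions arise from back-and-forth metric jumps and have nothing to do with cycles in $G$. Concretely, if $y,z$ are two neighbours of $x$, then $(x,y,x,z)$ lies in $\MC_{3,3}(G)\setminus\EMC_{3,3}(G)$; and in a graph of girth at least $5$, the tuple $(w,x,y,w)$ with $x,y$ neighbours of $w$ lies in $\MC_{3,4}(G)\setminus\EMC_{3,4}(G)$. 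So the two chain complexes differ in exactly the bidegree $(k+1,\ell)$ where the potential bounding chains live (for girth $5$ the relevant bidegrees are $(2,3)$ and $(3,3)$; for girth $7$ they are $(2,4)$ and $(3,4)$), and the isomorphism $\EMH_{k,\ell}(G)\cong\MH_{k,\ell}(G)$ you want cannot be obtained this way, whatever the girth.

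Fortunately, you do not need that isomorphism: the direction you need is the easy one. Since $\EMC_{*,\ell}(G)$ is a subcomplex of $\MC_{*,\ell}(G)$, any chain in $\EMC_{k+1,\ell}(G)$ bounding $z$ is in particular a chain in $\MC_{k+1,\ell}(G)$ bounding $z$. Hence if $z$ is a cycle that happens to lie in $\EMC_{k,\ell}(G)$ and $[z]\neq 0$ in $\MH_{k,\ell}(G)$, then automatically $[z]\neq 0$ in $\EMH_{k,\ell}(G)$. (The failure of $\EMH\to\MH$ to be injective, which you rightly flag, concerns the opposite implication and is irrelevant here.) Granting -- as you and the paper both implicitly do -- that the non-vanishing off-diagonal class of \cite[Theorem~1.5]{asao2024girth} is represented by a chain supported on distinct vertices of a geodesic (shortest) cycle, this completes the argument: girth at least $5$ yields a non-trivial off-diagonal class in $\EMH$, contradicting regular diagonality. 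Finally, your treatment of trees is confused: Lemma~\ref{lem:diagvsregdiag} does not apply to $\rho(T)$ for an undirected tree $T$, since $\rho(T)$ always contains directed $2$-cycles $x\to y\to x$; in fact undirected trees on at least three vertices are diagonal but \emph{not} regularly diagonal (for the path on three vertices one has $\EMH_{2,3}\neq 0$), and it is by this kind of computation, not by that lemma, that the tree case gets excluded.
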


We shall see that Lemma~\ref{lem:regdiagandgirth} can be improved, and in Corollary~\ref{cor:regdiagcompletegraphs} we provide a complete characterization of regularly diagonal undirected graphs. 
To proceed with such characterization, we  need to analyze the relation between the existence of cycles in an undirected graph and its eulerian magnitude homology. First,  we recall the following fact -- see \cite[Lemma~3.2]{arXiv:2403.09248}:

\begin{remark}\label{rem:cyclesEMH22}
    Let $G$ be a graph without 3- and 4-cycles. Then, $\EMH_{2,2}(G)=0$.
\end{remark}

Following the arguments of~\cite[Theorem~4.3]{zbMATH07370433}, we can extend the result in Remark~\ref{rem:cyclesEMH22} to the whole diagonal:

\begin{proposition}\label{prop:vanishingdiagonal}
    Let $G$ be an undirected graph without 3- and 4-cycles. Then, $\EMH_{k,k}(G)=0$ for all $k\geq 2$.
\end{proposition}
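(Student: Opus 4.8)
The plan is to show that the condition ``no 3- and 4-cycles'' forces every generator of $\EMC_{k,k}(G)$ to be highly constrained, and then to prove that the diagonal eulerian magnitude chain complex becomes acyclic in positive degrees. First I would understand what a generator of $\EMC_{k,k}(G)$ looks like: a tuple $(x_0,\dots,x_k)$ with all $x_i$ distinct and $\len(x_0,\dots,x_k)=k$, which means $\sum_{i=0}^{k-1} d_G(x_i,x_{i+1}) = k$. Since each $d_G(x_i,x_{i+1})\geq 1$ (the vertices are distinct, so the distance is at least $1$), the only way the sum equals $k$ is to have $d_G(x_i,x_{i+1})=1$ for every $i$; that is, each consecutive pair must be an actual edge of $G$. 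Hence the diagonal chains $\EMC_{*,k}$ with $\ell=k$ are generated by \emph{injective walks along edges}, i.e.\ paths in $G$ with distinct vertices whose consecutive steps are edges.

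Next I would follow the strategy of \cite[Theorem~4.3]{zbMATH07370433} referenced in the statement, which is the torsion/diagonal-vanishing argument of Sazdanovic and Summers. The idea is to set up an explicit (partial) chain homotopy or to split off the acyclic part of the complex. Concretely, I would try to identify the diagonal subcomplex $\EMC_{*,k}(G)$ (for fixed $\ell=k$) with a complex computing the homology of some simplicial-like object built from the edge-paths, and then use the hypothesis that $G$ has no short cycles to guarantee that any closed ``triangle'' or ``square'' relation among these paths must degenerate. The absence of $3$- and $4$-cycles is precisely what rules out the nontrivial relations in low degrees; Remark~\ref{rem:cyclesEMH22} is the base case $k=2$, and the inductive step should propagate this vanishing upward. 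The main technical tool will be a homotopy operator $h$ sending a generator $(x_0,\dots,x_k)$ to a tuple with one extra vertex inserted (for instance, inserting a neighbour of an endpoint), verifying the identity $\partial h + h \partial = \mathrm{id}$ on the relevant subquotient, so that the positive-degree homology collapses.

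I would carry this out by induction on $k$, with Remark~\ref{rem:cyclesEMH22} providing the case $k=2$. Assuming $\EMH_{j,j}(G)=0$ for $2\leq j<k$, I would analyze a cycle $z\in\EMC_{k,k}(G)$ with $\partial z=0$ and exhibit a bounding chain in $\EMC_{k+1,k}(G)$ --- but note $\EMC_{k+1,k}(G)=0$ by Lemma~\ref{lem:LowerTriangular} since $k+1>k$, so in fact the top diagonal piece forces the relevant homotopy to live in a different length grading, which is exactly why one must work with the \emph{whole} complex rather than a single $\ell$. Thus I would instead work inside the acyclic-carrier framework: the no-short-cycle hypothesis ensures that the face maps $\partial_i$ are ``independent'' enough that the diagonal complex is the chain complex of a simplicial complex (or of the order complex of a poset), whose contractibility or matching structure gives the vanishing. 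A discrete Morse matching, pairing each injective edge-path with its extension by a canonically chosen vertex, would make all positive-degree homology disappear once one checks the matching is acyclic, and the no-$3$/$4$-cycle condition is what guarantees the canonical choice is well-defined and the matching has no critical cells in degrees $\geq 2$.

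The hard part will be justifying that the homotopy (or Morse matching) is genuinely \emph{well-defined and acyclic} globally, not just locally: a naive ``insert a neighbour'' operation can fail to commute with $\partial$ precisely when short cycles create ambiguity in the insertion, so the entire force of the hypothesis ``no $3$- and $4$-cycles'' must be spent to show the carrier is acyclic and the length grading $\ell=k$ is preserved under the operation. I expect the cleanest route is to transcribe the Sazdanovic--Summers argument of \cite[Theorem~4.3]{zbMATH07370433} verbatim into the eulerian setting, verifying at each step that restricting from all magnitude chains to the \emph{distinct-vertex} eulerian chains does not break the homotopy --- which should hold because on the diagonal $\ell=k$ all generators are already injective edge-paths, so the eulerian restriction changes nothing in the relevant degrees.
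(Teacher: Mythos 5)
Your proposal correctly identifies the generators of $\EMC_{k,k}(G)$ as injective edge-paths, but it then misses the one structural observation that makes the paper's proof short, and the machinery you propose in its place cannot run. Since $\EMC_{k+1,k}(G)=0$ (Lemma~\ref{lem:LowerTriangular}), the group $\EMH_{k,k}(G)$ is exactly the kernel of the single map $\partial\colon \EMC_{k,k}(G)\to\EMC_{k-1,k}(G)$, so the whole task is to prove this differential is injective. You notice that $\EMC_{k+1,k}(G)=0$, but you draw the wrong conclusion from it, namely that ``one must work with the whole complex rather than a single $\ell$'': this is not an option, because the differential preserves the length grading and the magnitude chain complex splits as a direct sum over $\ell$, so $\EMH_{k,k}$ is computed entirely inside $\EMC_{*,k}$. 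For the same reason your proposed homotopy operator or Morse matching cannot exist in the form you describe: extending an edge-path by a vertex raises $\ell$ and leaves the complex $\EMC_{*,k}$, while inserting a vertex between consecutive vertices at distance $1$ cannot preserve length (there is no $y$ with $d(x_i,y)+d(y,x_{i+1})=1$ and $y$ distinct from both). Hence any length-preserving ``insertion'' operator is identically zero in top degree, and the identity $\partial h+h\partial=\mathrm{id}$ degenerates there to $h\partial=\mathrm{id}$, which is precisely the injectivity you were supposed to prove --- the argument is circular.

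The genuine gap is that your proposal never exhibits the mechanism by which the hypothesis is spent; it only asserts that the no-$3$/$4$-cycle condition ``rules out nontrivial relations.'' The paper's proof consists of exactly two concrete implications. First, a single generator $(x_0,\dots,x_k)$ lies in the kernel if and only if every interior face drops the length, i.e.\ $d_G(x_{i-1},x_{i+1})=1$, which makes $x_{i-1},x_i,x_{i+1}$ a $3$-cycle; since $G$ has none, every generator has at least one surviving (nonzero) face, and the faces of a single injective tuple are pairwise distinct, so no lone generator is a cycle. Second, if a nontrivial linear combination $\gamma$ is in the kernel, a surviving face $(x_0,\dots,\hat{x_i},\dots,x_k)$ of some tuple in $\gamma$ must cancel against a face of a \emph{different} tuple in $\gamma$; because that other tuple must again be an injective edge-path of length $k$, it can only have the form $(x_0,\dots,x_{i-1},y,x_{i+1},\dots,x_k)$ with $y\neq x_i$, and then $x_{i-1},x_i,x_{i+1},y$ form a $4$-cycle, which is excluded. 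These two steps are the entire proof; without them (or a worked-out substitute, which your acyclic-carrier/Morse sketch does not provide), the proposal reduces to a restatement of the goal.
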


\begin{proof}
    Let $k\geq 2$ be a natural number. As $\EMC_{k+1,k}(G)$ is always trivial,  $\EMH_{k,k}(G)$ is the kernel of the differential
    \[
    \EMC_{k,k}(G)\to\EMC_{k-1,k}(G) \ . 
    \]
    Recall that $\EMC_{k,k}(G)$ is generated by tuples $(x_0,\dots,x_k)$ of distinct vertices of~$G$ with $\len(x_0,\dots,x_k)=k$. This means that  $d_G(x_i,x_{i+1})=1$ for all $i=0,\dots, k-1$. 
  
    Assume that  a tuple $(x_0,\dots,x_k)$ is in the kernel of the differential. This happens if and only if $\len(x_0,\dots,\hat{x_i},\dots,x_k)<k$ for all $i$. However, if for some $i$ the length of the tuple is not preserved, we have that the triple $(x_{i-1},x_i,x_{i+1})$ forms a cycle. By assumption, $G$ contains no 3-cycles, and this can not happen. As a consequence, no tuple of type $(x_0,\dots,x_k)$ with $k\geq 2$ is in the kernel of the differential. 

    We now need to show that there are no linear combinations of tuples in $\EMC_{k,k}(G)$ with trivial differential. Let $\gamma$ be a non-trivial linear combination of tuples in $\EMC_{k,k}(G)$ with $\gamma $ in the kernel of the differential. Let $(x_0,\dots,x_k)$ be a tuple appearing in the combination $\gamma$ with scalar $c\neq 0$. We know that for some $i$, the distance is $d_G(x_{i-1},x_{i+1})=2$. As $\gamma$ is in the kernel of the differential, the tuple $(x_0,\dots,\hat{x_i},\dots,x_k)$ must be canceled in $\EMC_{k-1,k}(G)$ by another tuple of type $\partial_i(x_0,\dots,x_{i-1},y,x_{i+1},\dots, x_k)$ with $(x_0,\dots,x_{i-1},y,x_{i+1},\dots, x_k)$ also appearing in $\gamma-c(x_0,\dots,x_{i-1},x_i,x_{i+1},\dots, x_k)$. But then the vertices $x_{i-1}, x_i, x_{i+1},y$ form a 4-cycle. This is not possible by assumption and, as a consequence, the kernel of the differential is always 0 for $k\geq 2$. 
\end{proof}

Observe that the requirement not to have  3- or 4-cycles is strict, as the following counterexamples show:

\begin{example}
    Let $G=C_3$ be the cycle graph on 3 vertices. Then, $\EMH_{2,2}(G)$ has rank 6. If $G=C_4$ the cycle graph with 4 vertices, then $\EMH_{2,2}$ has rank 4. 
\end{example}

Recall from Example~\ref{ex:treesdiagonal} that undirected trees are diagonal graphs, and, as trees have no cycles, the eulerian magnitude homology in bidegree $(k,k)$, $k\geq 2$, is trivial by Proposition~\ref{prop:vanishingdiagonal}. One may be tempted to infer that trees are also regularly diagonal; however, this is not true, as the next example shows:

\begin{example}
    Let $I_n$ be the linear undirected graph on $n$ vertices. Then, Example~\ref{ex:linear} shows that eulerian magnitude homology of $I_n$ in bidegree $\left(n-1,\frac{n(n-1)}{2}\right)$ is non-trivial.
\end{example}

We now give a complete characterization of undirected graphs which are regularly diagonal.

\begin{theorem}\label{thm:charregdiag}
    Let $G$ be a finite connected regularly diagonal undirected graph, with eulerian magnitude homology groups $\EMH_{k,k}(G)=0$  in all bidegrees $(k,k)$ for $k>n$. 
    Then, $G$ is the complete graph on $n+1$ vertices. 
\end{theorem}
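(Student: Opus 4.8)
The plan is to decouple the statement into two independent parts: that a finite connected regularly diagonal undirected graph is necessarily \emph{complete}, and that for a complete graph the top non-vanishing diagonal degree determines the number of vertices. The second part is immediate from Example~\ref{ex:complete}: for $G=K_m$ one has $\mathrm{rk}\,\EMH_{k,k}(K_m)=\tfrac{m!}{(m-1-k)!}$ for $0\le k\le m-1$ and $\EMH_{k,k}(K_m)=0$ for $k\ge m$, so the diagonal is supported exactly in degrees $0,\dots,m-1$. Reading the hypothesis as the sharp statement that $n$ is the top degree (equivalently $\EMH_{n,n}(G)\neq 0$ and $\EMH_{k,k}(G)=0$ for $k>n$), the vanishing forces $m-1=n$, i.e. $m=n+1$. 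Hence the entire content lies in the first part, and the proof should be devoted to showing that \emph{regular diagonality plus connectedness forces completeness}.

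For completeness I would argue by contradiction: assume $G$ is connected but not complete and produce a non-zero off-diagonal class. Being connected and not complete, $G$ contains a pair of vertices at distance exactly $2$, hence injective tuples whose length strictly exceeds their number of steps; if $G$ moreover contains a cycle then Lemma~\ref{lem:regdiagandgirth} pins its girth to $3$ or $4$, though the construction below treats all connected non-complete graphs on the same footing (trees included, cf. the off-diagonal class of $I_n$ in Example~\ref{ex:linear}, which already shows that most trees fail to be regularly diagonal).

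The heart of the argument is an \emph{extremal length} construction. Let $L$ be the maximal length of an injective tuple in $G$, and let $k_{\min}$ be the least degree in which $\EMC_{*,L}(G)$ is non-zero, i.e. the smallest number of steps realizing length $L$. A short combinatorial argument (rerouting a longest simple path through a distance-$2$ pair) shows $k_{\min}<L$, so every group $\EMH_{k,L}(G)$ we target lies strictly below the diagonal. By minimality $\EMC_{k_{\min}-1,L}(G)=0$, so the outgoing differential on $\EMC_{k_{\min},L}(G)$ vanishes and $\EMH_{k_{\min},L}(G)=\EMC_{k_{\min},L}(G)/\partial\,\EMC_{k_{\min}+1,L}(G)$ is the cokernel of the boundary from one degree higher. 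In particular every minimal-vertex length-$L$ tuple is automatically a cycle, and the task reduces to showing that they are not all boundaries.

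The main obstacle is exactly this non-vanishing step. In dense girth-$3$ graphs the naive low-degree candidates get killed: the class of a distance-$2$ path $(a,b,c)$ in $\EMH_{2,3}(G)$, for instance, becomes a boundary as soon as the endpoints $a,b$ acquire a second common neighbour, which is precisely why one must pass to the extremal length $L$ and control the boundary coming from one degree up. I expect the cleanest route to be a leading-term argument: fix a total order on injective tuples favouring large single steps, and observe that $\partial$ of a tuple with one extra vertex only ever replaces a jump by a strictly shorter detour, so a minimal-vertex length-$L$ tuple whose largest step is maximal cannot be cancelled and survives in $\EMH_{k_{\min},L}(G)$. The endpoint decomposition $\EMC_{*,L}(G)=\bigoplus_{a,b}\EMC_{*,L}(a,b)$ localizes this bookkeeping to a single source–target block and should make the cancellation analysis manageable, in the spirit of the argument of Sazdanovic and Summers~\cite[Theorem~4.3]{zbMATH07370433} run at the extremal length rather than on the diagonal. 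Exhibiting any such off-diagonal class contradicts regular diagonality, forcing $\mathrm{diam}(G)=1$, i.e. $G=K_{n+1}$.
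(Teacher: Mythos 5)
Your splitting of the statement and the reduction to producing a single off-diagonal class are fine, and your extremal-length setup is genuinely close to a working proof; but the step you yourself flag as ``the main obstacle'' --- showing that some tuple in $\EMC_{k_{\min},L}(G)$ is not a boundary --- is left as an expectation, and the leading-term sketch you offer for it does not hold up. A non-zero face $\partial_i$ of a length-$L$ tuple replaces two consecutive steps by a single step of \emph{equal} total length (that is exactly the length-preservation condition), so faces of higher tuples have \emph{larger} individual steps than their parents; in particular, a tuple whose largest step is maximal is precisely the kind of tuple that \emph{can} arise as a face, and maximality of the largest step by itself does not obstruct being hit by a boundary. As written, your proposal establishes that every chain in bidegree $(k_{\min},L)$ is a cycle, but says nothing verifiable about the cokernel of $\partial\colon \EMC_{k_{\min}+1,L}(G)\to\EMC_{k_{\min},L}(G)$; the same goes for the claim $k_{\min}<L$, which rests on an unexecuted ``rerouting'' argument.

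The missing observation, which closes both gaps at once and is in essence what the paper's proof exploits, is that in a \emph{connected} graph every injective tuple of maximal length $L$ must use \emph{all} the vertices: if some vertex $v$ were missing, connectedness gives $0<d_G(x_k,v)<\infty$, and appending $v$ at the end would produce an injective tuple of length strictly greater than $L$. Hence, writing $n+1$ for the number of vertices, $k_{\min}=n$ is the top degree, $\EMC_{n+1,L}(G)=0$, and the boundary problem you worry about is vacuous; moreover every length-$L$ tuple is automatically a cycle (a non-zero face would be a length-$L$ tuple omitting a vertex, contradicting the observation), and when $G$ is not complete one gets $L\geq 2+(n-1)=n+1>n=k_{\min}$ by starting a tuple at a pair of vertices at distance $2$ and appending the remaining vertices. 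This produces the desired off-diagonal class and forces completeness. The paper's proof realizes the same idea constructively: it builds a specific tuple on all $n+1$ vertices by greedily maximizing each successive step, starting from a vertex realizing the diameter, checks that the greedy property forces every face to drop length (so the tuple is a cycle), and notes it cannot be a boundary since $\EMC_{n+1,l}(G)=0$. So your approach is salvageable, but only after the all-vertices observation replaces the leading-term argument.
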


\begin{proof}
    Let $n+1>2$ be the number of vertices of $G$, the case on $1$ and $2$ vertices being straightforward. For each $x$ in $G$ denote by $\ell(x)$ the maximum distance achievable from $x$; that is
    \[
    \ell(x)\coloneqq \max\{d_G(x,y)\mid y\in V(G)\}\geq 1 \ .
    \]
    We want to show that $\ell(x)=1$ for all $x$ in $G$.     Assume this is not the case and let~$D$ be the diameter of $G$, that is, $D=\max\{\ell(x)\mid x\in V(G)\}\geq 2$. We now  iteratively construct a tuple $(x_0,\dots,x_n)$ consisting of all possible vertices of $G$,  as follows:
    \begin{itemize}
        \item pick any vertex $x_0$  of $G$ with $\ell(x_0)=D$; 
        \item having defined the tuple $(x_0,\dots,x_{i-1})$, if $i-1\neq n$, we choose $x_i$ such that $d_G(x_{i-1},x_i)=\max \{d_G(x_{i-1},y)\mid y\in V(G)\setminus \{x_0,\dots,x_{i-1}\}\} $. 
    \end{itemize}
    Observe that, by construction,  the length~$l$ of the tuple $(x_0,\dots,x_n)$ is greater than $n$.
    As $(x_0,\dots,x_n)$ is  of maximal size, it is not in the image of the differential. We want   to show that it is a cycle, concluding the proof.  
    To show that  the tuple $(x_0,\dots,x_n)$ is a cycle, it is enough to show that all subtuples $(x_0,\dots,\hat{x_i},\dots , x_n)$ have length smaller than $(x_0,\dots, x_n)$. For a series of vertices $x_{i-1}, x_i, x_{i+1}$ in the tuple, observe that we always have $d_G(x_{i-1},x_{i+1})\leq d_G(x_{i-1},x_{i})+d_G(x_{i},x_{i+1})$ by the triangle inequality. In addition, $d_G(x_{i-1},x_{i+1})\leq \ell(x_{i-1})$ by definition of $\ell(x_{i-1})$, and, by construction of the tuple $(x_0,\dots,x_n)$, we have that $d_G(x_{i-1},x_i)$ is maximal among the distances $d_G(x_{i-1},y)$ with $y\in V(G)\setminus\{x_0,\dots,x_{i-1}\}$. Hence, $d_G(x_{i-1},x_i)\geq d_G(x_{i-1},x_{i+1})$. As a consequence, we have $d_G(x_{i-1},x_{i+1})< d_G(x_{i-1},x_{i})+d_G(x_{i},x_{i+1})$; that is, $\len(x_0,\dots,\hat{x_i},\dots,x_n)<\len(x_0,\dots,x_n)$ and $(x_0,\dots,x_n)$ is a cycle in $\EMH_{n,l}(G)$.  However, this is not possible because $l>n$ and $G$ is regularly diagonal. 
\end{proof}

Recall from Example~\ref{ex:complete} that complete graphs are regularly diagonal.

\begin{corollary}\label{cor:regdiagcompletegraphs}
    Eulerian magnitude homology detects complete graphs.
\end{corollary}

The results exhibited so far in this section reveal a connection between the girth of a graph and the diagonality of its eulerian magnitude homology.
In particular, the intuition supported by Theorem \ref{thm:charregdiag} is that graphs with small girth exhibit diagonality.
This aligns with the intuition that the presence of short cycles,  enforcing a more rigid pattern, simplifies the combinatorial structure of eulerian magnitude homology.
In contrast, as the girth increases, deviations from diagonality emerge.

\begin{lemma}
    \label{lem:subdiagonal given girth}
    Let $G$ be an undirected graph with girth $\gir(G)=\gamma$.
    Then $G$ has non-trivial eulerian magnitude homology until at least the $\lceil \frac{(\gamma-1)^2}{2} \rceil - (\gamma -1)$ subdiagonal.
\end{lemma}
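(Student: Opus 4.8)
The plan is to produce a single nonzero eulerian magnitude homology class of $G$ located on the prescribed subdiagonal, coming from a shortest cycle of $G$. Write $\gamma=\gir(G)$ and let $C\subseteq G$ be a cycle of length $\gamma$, so that $C\cong C_\gamma$. The first step I would carry out is to show that such a shortest cycle is an \emph{isometric} subgraph: if two vertices $x,y$ of $C$ had $d_G(x,y)<d_C(x,y)$, then a $G$-geodesic $P$ from $x$ to $y$ together with the shorter of the two arcs $A$ of $C$ between them would, viewed in the cycle space over $\mathbb{F}_2$, give a nonzero even subgraph $P+A$ with at most $|P|+|A|<\gamma$ edges (since $|P|<|A|\le \gamma/2$); this subgraph contains a cycle of length $<\gamma$, contradicting $\gir(G)=\gamma$. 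Hence $d_G$ and $d_C$ agree on the vertices of $C$.

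Given isometry, for every length $\ell$ the inclusion $C\hookrightarrow G$ (a regular morphism, hence inducing a map by Proposition~\ref{prop:functoriality}) realizes $\EMC_{*,\ell}(C)$ as a subcomplex of $\EMC_{*,\ell}(G)$ on which the two differentials coincide: each face map only compares distances between vertices of $C$, and these are the same whether measured in $C$ or in $G$. By Example~\ref{ex:rank cycles} the group $\EMH_{\gamma-1,\ell}(C)$ is nonzero for the maximal length $\ell=\lceil (\gamma-1)^2/2\rceil$, represented by tuples that use all $\gamma$ vertices of $C$ and attain this maximal total length. Fix such a nonzero cycle $z$. Because $\EMC_{\gamma,\ell}(C)=0$ (as $C$ has only $\gamma$ vertices) and the differentials match, $z$ is automatically a cycle in $\EMC_{\gamma-1,\ell}(G)$; the whole content is to show that $z$ is not a boundary there, which would place a nonzero class in bidegree $(\gamma-1,\ell)$, i.e.\ on the subdiagonal $\ell-(\gamma-1)=\lceil (\gamma-1)^2/2\rceil-(\gamma-1)$, as required.

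For non-triviality I would compute, for an arbitrary $w\in\EMC_{\gamma,\ell}(G)$, the coefficient in $\partial w$ of a maximal cycle tuple $\sigma$ occurring in $z$. A $(\gamma+1)$-tuple $\tau$ contributes to this coefficient only if some internal face $\partial_i\tau$ equals $\pm\sigma$, which forces $\tau$ to be $\sigma$ with a single extra vertex $v\notin V(C)$ inserted between two consecutive entries $\sigma_{i-1},\sigma_i$, with $v$ lying on a $G$-geodesic between them so that the length $\ell$ is preserved. By isometry this geodesic has length $d_G(\sigma_{i-1},\sigma_i)$, equal to the shorter $C$-arc between $\sigma_{i-1}$ and $\sigma_i$; the external geodesic together with that arc then yields a cycle of length at most $2\,d_G(\sigma_{i-1},\sigma_i)$, which is strictly less than $\gamma$ whenever the step is shorter than $\gamma/2$, again contradicting the girth. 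In the odd case this finishes the argument at once: a maximal tuple in $C_\gamma$ has all $\gamma-1$ of its steps equal to the diameter $(\gamma-1)/2<\gamma/2$, so no admissible $\tau$ exists, every such coefficient vanishes, and $z\neq\partial w$; since all of $\mathrm{supp}(z)$ consists of maximal cycle tuples, $z$ is not a boundary.

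The main obstacle is the even case, where a maximal tuple is forced to contain steps of length exactly $\gamma/2$ joining antipodal vertices of $C$. For such a step the short-cycle estimate only gives a cycle of length $2\cdot(\gamma/2)=\gamma$, which is permitted, so an external vertex $v$ on a second geodesic between an antipodal pair is not excluded and the coefficient computation above need not vanish. Handling this requires a finer analysis of the boundary: one must track the remaining faces $\partial_j\tau$, which all still contain the external vertex $v$, and show that they cannot be cancelled within $\EMC_{\gamma-1,\ell}(G)$ without reintroducing further external vertices, so that no combination $w$ can satisfy $\partial w=z$. I expect this cancellation bookkeeping for the antipodal (diameter) steps to be the technical heart of the even case, the odd case being essentially immediate from isometry and the girth bound.
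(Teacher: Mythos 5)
Your isometric-cycle lemma and your odd-girth argument are correct, and they are in fact more rigorous than the paper's own proof: the paper uses the same witness (a greedy maximal-length tuple on an embedded $C_\gamma$), asserts that ``by construction $c$ is a non-trivial cycle'', and concludes via Example~\ref{ex:rank cycles}, without addressing either the isometricity of a shortest cycle or the possibility that the class bounds a chain passing through vertices outside the cycle -- exactly the two points you treat carefully. The genuine gap is the one you flag yourself: the even case is missing, and a proof of the lemma as stated must cover it.

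Moreover, this gap cannot be closed by the ``finer cancellation bookkeeping'' you outline, because for even girth the proposed witness class can genuinely die. Take $G=K_{2,3}$ with parts $\{a,b\}$ and $\{x,y,z\}$, so $\gir(G)=4$, and let $C$ be the (isometric) shortest cycle through $a,x,b,y$. Its maximal tuple $(x,y,a,b)\in\EMC_{3,5}(G)$ is a cycle, but it is a boundary:
\[
\partial\bigl(-(x,y,a,z,b)-(x,a,y,z,b)\bigr)
=-\bigl((x,y,z,b)-(x,y,a,b)\bigr)+(x,y,z,b)
=(x,y,a,b) \ ,
\]
using $\partial(x,y,a,z,b)=(x,y,z,b)-(x,y,a,b)$ and $\partial(x,a,y,z,b)=-(x,y,z,b)$. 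A block-by-block check of the start/end decomposition shows that in fact $\EMH_{3,5}(K_{2,3})=0$, so no linear combination of maximal $C$-tuples survives either; the external vertex $z$ lying on a second geodesic between the antipodal pair $a,b$ is precisely what kills the class. The conclusion of the lemma does still hold for $K_{2,3}$: the tuple $(a,x,y,z,b)\in\EMC_{4,6}$ has all internal faces of strictly smaller length, hence is a cycle, and $\EMC_{5,6}(K_{2,3})=0$, so it gives a nonzero class on subdiagonal $2$ -- but this witness sits in a different bidegree than $(\gamma-1,\lceil(\gamma-1)^2/2\rceil)$ and uses a vertex outside $C$. So the even case requires a genuinely different construction, not a refinement of the antipodal-step analysis; note that the same objection applies to the paper's proof read literally, since it does not distinguish the parity of $\gamma$.
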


\begin{proof}
    If $\gir(G)=\gamma$ then $G$ contains $C_\gamma$ as a subgraph with vertex set $\{x_1,\dots,x_{\gamma}\}$.
    So, it is possible to construct a tuple of vertices of type $c=(x_1,x_{\lfloor \frac{\gamma}{2}\rfloor},x_{\gamma},x_{\lfloor \frac{\gamma}{2}\rfloor -1},\dots)$, maximizing at each step the distance between consecutive elements in $c$. Hence, by construction $c$ is a non-trivial cycle.
    Then, as in  Example~\ref{ex:rank cycles}, the statement follows.
\end{proof}

\begin{remark}
    Notice that the fact that two graphs $G$ and $H$ have the same girth, does not imply that they have non-trivial eulerian magnitude homology to the same subdiagonal. Indeed, take $G$ to be $K_4$ with one edge removed. Then, $\gir(G)=\gir(K_4)$ but $G$ has $\EMH_{2,3}(G)\neq \emptyset$. 
\end{remark}

At this stage, having explored diagonality in $\EMH$ and its relationship to graph girth, a natural question arises: how does $\EMH(G)$ behave when $G$ is not the complete graph? Specifically, to what extent does $\EMH(G)$ deviate from the diagonal in such cases?
To address this, we introduce a distance metric between graphs based on graph density, which quantifies how far a given graph $G$ on $n$ vertices is from the complete graph $K_n$.
We then establish a result connecting this distance to the deviation of $\EMH(G)$ from the diagonal, providing a measure of how the homology of $G$ diverges from that of $K_n$ as the graph structure becomes sparser.

\begin{definition}
\label{def:subgraph network}
    Let $G$ be a graph. The \emph{subgraph network associated to~$G$}, denoted by $2^G$, is the undirected graph such that:
    \begin{itemize}
        \item the set of vertices $V(2^G)$ of $2^G$ is the set of all non-isomorphic connected spanning subgraphs of $G$;
        \item there is an edge between two vertices of $2^G$, say $H_1$ and $H_2$, if and only if for every vertex $v \in V(H_1)\cap V(H_2)$ it holds that $|\deg_{H_1}(v)-\deg_{H_2}(v)| \leq 1$.
    \end{itemize}
\end{definition}

As the vertices of $2^G$ can be represented by subgraphs of $G$, we say that $H$ is contained in $2^G$ if it appears as a vertex of $2^G$. 
    We provide in Figure \ref{fig:subgraph network K4} the subgraph network of the complete graph $K_4$.
    \begin{figure}[h]
        \begin{tikzpicture}[node distance={12mm}, thick, main/.style = {draw, circle}]

            \node[main] (0) {$0$}; 
		  \node[main] (1) [right of=0] {$1$};  
		  \node[main] (2) [above of=1] {$2$};
		  \node[main] (3) [above of=0] {$3$};
		  \draw (0) -- (1);
		  \draw (1) -- (2);
		  \draw (0) -- (2);
		  \draw (2) -- (3);
            \draw (0) -- (3);
            \draw (1) -- (3);
            \node[circle, draw=red, fit=(0) (1) (2) (3), inner sep=.1mm] (K4) {};

            \node[main] (2') [below left of=0] {$2$};
            \node[main] (3') [left of=2'] {$3$};
            \node[main] (0') [below of=3'] {$0$};
            \node[main] (1') [below of=2'] {$1$};
            \draw (0') -- (1');
		  \draw (1') -- (2');
		  \draw (0') -- (2');
		  \draw (2') -- (3');
            \draw (0') -- (3');
            \node[circle, draw=red, fit=(0') (1') (2') (3'), inner sep=.1mm] (K4') {};
            \draw[red] (K4) -- (K4');

            \node[main] (3'') [below right of=1] {$3$};
            \node[main] (2'') [right of=3''] {$2$};
            \node[main] (0'') [below of=3''] {$0$};
            \node[main] (1'') [below of=2''] {$1$};
            \draw (0'') -- (1'');
		  \draw (1'') -- (2'');
		  \draw (0'') -- (2'');
		  \draw (1'') -- (3'');
            \draw (0'') -- (3'');
            \node[circle, draw=red, fit=(0'') (1'') (2'') (3''), inner sep=.1mm] (K4'') {};
            \draw[red] (K4) -- (K4'');
            \draw[red] (K4') -- (K4'');

            \node[main] (2''') [below left of=0'] {$2$};
            \node[main] (3''') [left of=2'''] {$3$};
            \node[main] (0''') [below of=3'''] {$0$};
            \node[main] (1''') [below of=2'''] {$1$};
            \draw (0''') -- (1''');
		  \draw (1''') -- (2''');
		  \draw (2''') -- (3''');
            \draw (0''') -- (3''');
            \node[circle, draw=red, fit=(0''') (1''') (2''') (3'''), inner sep=.1mm] (K4''') {};
            \draw[red] (K4') -- (K4''');
            \path[every node/.style={font=\sffamily\small}]
			(K4) edge [bend right=50, red] node {} (K4''');

            \node[main] (3'''') [below right of=1'] {$3$};
            \node[main] (2'''') [right of=3''''] {$2$};
            \node[main] (0'''') [below of=3''''] {$0$};
            \node[main] (1'''') [below of=2''''] {$1$};
            \draw (0'''') -- (1'''');
		  \draw (1'''') -- (2'''');
		  \draw (1'''') -- (3'''');
            \draw (0'''') -- (3'''');
            \node[circle, draw=red, fit=(0'''') (1'''') (2'''') (3''''), inner sep=.1mm] (K4'''') {};
            \draw[red] (K4') -- (K4'''');
            \draw[red] (K4'') -- (K4'''');
            \draw[red] (K4''') -- (K4'''');

            \node[main, node distance={20mm}] (2''''') [below of=1'''] {$2$};
            \node[main] (3''''') [left of=2'''''] {$3$};
            \node[main] (0''''') [below of=3'''''] {$0$};
            \node[main] (1''''') [below of=2'''''] {$1$};
            \draw (0''''') -- (1''''');
		  \draw (1''''') -- (2''''');
            \draw (0''''') -- (3''''');
            \node[circle, draw=red, fit=(0''''') (1''''') (2''''') (3'''''), inner sep=.1mm] (K4''''') {};
            \draw[red] (K4''') -- (K4''''');
            \draw[red] (K4'''') -- (K4''''');
            \path[every node/.style={font=\sffamily\small}]
			(K4'') edge [bend left=90, looseness=1.2, red] node {} (K4''''');

            \node[main, node distance={20mm}] (2'''''') [below of=1''''] {$2$};
            \node[main] (3'''''') [left of=2''''''] {$3$};
            \node[main] (0'''''') [below of=3''''''] {$0$};
            \node[main] (1'''''') [below of=2''''''] {$1$};
            \draw (0'''''') -- (1'''''');
		  \draw (1'''''') -- (2'''''');
            \draw (1'''''') -- (3'''''');
            \node[circle, draw=red, fit=(0'''''') (1'''''') (2'''''') (3''''''), inner sep=.1mm] (K4'''''') {};
            \draw[red] (K4''') -- (K4'''''');
            \draw[red] (K4'''') -- (K4'''''');
            \draw[red] (K4''''') -- (K4'''''');

        \end{tikzpicture}
        \vspace*{-2cm}
        \caption{Subgraph network of $K_4$. The network $2^{K_4}$ is represented in red, and the non-isomorphic spanning connected subgraphs of $K_4$ defining the vertices of $2^{K_4}$ are drawn in black.}
        \label{fig:subgraph network K4}
    \end{figure}

The following result is a straightforward consequence of Definition \ref{def:subgraph network}.

\begin{lemma}
\label{lemma:network connected, diameter}
    Let $G$ be a connected undirected graph. Then, we have:
    \begin{enumerate}
        \item $2^G$ is connected;
        \item the diameter of $2^G$ is $\max_{v\in V(G)} \deg_G(v)$.
    \end{enumerate}
\end{lemma}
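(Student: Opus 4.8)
\section*{Proof proposal}

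The plan is to drive both statements with a single elementary mechanism: \emph{each edge of $2^G$ moves every vertex-degree by at most one}. Concretely, if a connected spanning subgraph $H'$ is obtained from another one $H$ by adding or deleting a single edge $\{u,w\}$, then $\deg_{H'}$ and $\deg_H$ coincide away from $u,w$ and differ by exactly $1$ at $u,w$; so $H$ and $H'$ are adjacent in $2^G$ whenever both are admissible vertices of $2^G$. This single-edge move is the only real input.

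For part~(1) I will run a hub argument with $G$ as the hub. Every vertex $H$ of $2^G$ is a connected subgraph of $G$, so the edges of $E(G)\setminus E(H)$ can be reinstated one at a time; since $G$ is connected, I may order them so that each newly added edge meets the part already built, whence every intermediate graph is again a connected spanning subgraph and, by the elementary move, is $2^G$-adjacent to its predecessor. This yields a walk from $H$ to $G$; as $H$ is arbitrary, $2^G$ is connected.

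For part~(2) the same engine delivers the lower bound at once: along any walk $H_1=F_0,\dots,F_m=H_2$ in $2^G$ the degree of a fixed vertex changes by at most one per step, so $d_{2^G}(H_1,H_2)\ge \max_{v}\lvert \deg_{H_1}(v)-\deg_{H_2}(v)\rvert$. Choosing $v^\ast$ with $\deg_G(v^\ast)=\max_{v\in V(G)}\deg_G(v)=:\Delta$ and comparing $G$, in which $v^\ast$ has degree $\Delta$, with the minimal subgraph $\{v^\ast\}$, in which $v^\ast$ has degree $0$, gives a pair with degree discrepancy $\Delta$ at $v^\ast$; hence the diameter is at least $\Delta$. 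For the matching upper bound I will transform an arbitrary pair $H_1,H_2$ in two monotone phases through the common over-graph $H_1\cup H_2$: first raise each degree up to its value in $H_1\cup H_2$, then lower it to its value in $H_2$, adjusting every vertex by at most one per round. Since all degrees lie in $\{0,1,\dots,\Delta\}$, at most $\Delta$ rounds are needed, giving $d_{2^G}(H_1,H_2)\le \Delta$.

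The delicate point, and the main obstacle, is this upper bound. The per-step degree estimate pins the distance from below with no effort, but organising the additions and deletions so that simultaneously (i) no degree moves by more than one in any single step, (ii) every intermediate subgraph remains connected and spanning — the deletion phase being where connectedness can fail — and (iii) the number of rounds stays at $\Delta$ rather than a naive chromatic-index bound of $\Delta+1$, is where the precise interaction between the edge set being modified and the maximum degree of $G$ must be controlled. I expect this to be the only nontrivial step; everything else follows formally from the single-edge move and Definition~\ref{def:subgraph network}.
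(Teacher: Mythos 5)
Your argument for part~(1) is correct: reinstating the edges of $E(G)\setminus E(H)$ one at a time keeps every intermediate graph a connected spanning subgraph, a one-edge change alters only the two endpoint degrees and by exactly one (so consecutive graphs satisfy the adjacency condition of Definition~\ref{def:subgraph network}), and consecutive graphs have different edge counts, hence are genuinely distinct vertices of $2^G$. For comparison, the paper offers no proof at all of this lemma --- it is asserted as ``a straightforward consequence'' of the definition --- so part~(1) of your proposal is, if anything, more than the paper provides.

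Part~(2) contains a genuine gap, and it is not the step you flagged as delicate. Your lower bound compares $G$ with ``the minimal subgraph $\{v^\ast\}$'', but a single vertex is not a \emph{spanning} connected subgraph (unless $G$ is a point), so it is not a vertex of $2^G$. Since every vertex of $2^G$ is connected and spanning, every $v\in V(G)$ has degree at least $1$ in it, and your per-step degree estimate can certify at most $d_{2^G}(H_1,H_2)\ge \Delta-1$, where $\Delta\coloneqq\max_{v\in V(G)}\deg_G(v)$ --- never $\Delta$. The deferred upper bound is not the way out either, because the statement, read literally, is false, so no organization of edge moves can complete the proof. Two counterexamples: (i) if $G$ is a tree with at least one edge, its only connected spanning subgraph is $G$ itself, so $2^G$ is a single vertex and has diameter $0<\Delta$; (ii) for $G=K_4$, any spanning $4$-cycle has all degrees equal to $2$, while every connected spanning subgraph of $K_4$ has all degrees in $\{1,2,3\}$, so the $4$-cycle is adjacent in $2^{K_4}$ to \emph{every} other vertex --- recall that adjacency in $2^G$ only requires degree differences at most $1$, not a single-edge move, so your identification of edges of $2^G$ with one-edge modifications is a sufficient but not necessary condition --- giving $\mathrm{diam}(2^{K_4})=2<3=\Delta$. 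Indeed, the network drawn in the paper's own Figure~\ref{fig:subgraph network K4} has diameter $2$. The defect here lies in the statement (or in the definition of $2^G$, e.g.\ whether non-spanning subgraphs should be allowed), not in a repairable shortcoming of your two-phase scheme; any honest write-up should record that part~(2) fails as stated.
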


Let $G$ and $H$ be undirected graphs with  $n$ vertices.
       We further define the \emph{covering subgraph network of $G$ and $H$}, denoted by $2^{(G,H)}$, as the subgraph network associated to the complete graph $K_n$.

\begin{example}
    Let $G$ be the linear graph~$I_4$ on $4$ vertices, and let $H$ be the bipartite graph $K_{1,3}$.
    Then, the covering subgraph network $2^{(I_4,K_{1,3})}$ is the subgraph network $2^{K_4}$ represented in Figure \ref{fig:subgraph network K4}.
\end{example}

We are now ready to introduce a metric between graphs with same number of vertices. To do it, recall from Lemma~\ref{lemma:network connected, diameter} that the graph $2^{(G,H)}$ is connected.

\begin{definition}
Let $G,H$ be undirected graphs with $|V(G)|=|V(H)|$.
    The distance $\Delta$ between $G$ and $H$,   
    is the distance $\Delta(G,H)$ between $G$ and $H$ considered as vertices of $2^{(G,H)}$:
    \[
    \Delta(G,H)\coloneqq d_{2^{(G,H)}}(G,H) \ ,
    \]
    where $d_{2^{(G,H)}}$ is the path metric on $2^{(G,H)}$.
\end{definition}

\begin{example}
    The distance $\Delta(K_{1,3},C_4)$ between the bipartite graph $K_{1,3}$ and the cycle graph $C_4$ is $\Delta(K_{1,3},C_4)=1$, as shown in Figure \ref{fig:subgraph network K4}. 
\end{example}

\begin{notation}
    \label{notation:maximal subcycle}
    For the undirected simple complete graph $K_n$, we call $\gamma_n^s$ the girth of the maximal sub-cycle achievable by removing $s$ edges from $K_n$, see Figure \ref{fig:gamma 34 10} for an example.
\end{notation}

\begin{figure}[h]
    \begin{tikzpicture}[every node/.style={circle, draw, minimum size=5mm}]

    \foreach \x in {0,...,9}
    \node (\x) at (162-\x*36:2cm) {\x};

    \foreach \u/\v in {0/1, 1/2, 2/3, 3/4, 4/9, 0/9, 4/5, 4/6, 4/7, 4/8,
                   5/6, 5/7, 5/8, 5/9, 6/7, 6/8, 6/9, 7/8, 7/9, 8/9}
    \draw (\u) -- (\v);

\end{tikzpicture}
\caption{By removing $34$ edges from $K_{10}$ we can build a $C_6$, so we have $\gamma_{10}^{34}=6$.}.
\label{fig:gamma 34 10}
\end{figure}

With these preliminaries in place, we are now ready to state the result connecting $\Delta(G,K_n)$ to the deviation of $\EMH(G)$ from the diagonal.

\begin{proposition}
\label{prop:min diagonal if sparse enough}
    Let $G$ be a graph on $n$ vertices such that $\Delta(G,K_n)=h$ and $C_\gamma$, with $\gamma \geq \gamma^h_n$, is a cyclic subgraph of $G$.
    Then $G$ has non-trivial eulerian magnitude homology until at least subdiagonal $\lceil\frac{(\gamma-1)^2}{2}\rceil - (\gamma -1)$.
\end{proposition}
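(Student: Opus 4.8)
The plan is to reduce Proposition~\ref{prop:min diagonal if sparse enough} directly to Lemma~\ref{lem:subdiagonal given girth}, using the hypothesis on $\Delta(G,K_n)$ only to guarantee the existence of a sufficiently long cyclic subgraph. The key observation is that Lemma~\ref{lem:subdiagonal given girth} already furnishes the subdiagonal bound $\lceil\frac{(\gamma-1)^2}{2}\rceil-(\gamma-1)$ from the mere presence of a cycle $C_\gamma$ as a subgraph of $G$, but it phrases this in terms of the \emph{girth} of $G$. Here instead we are handed an arbitrary cyclic subgraph $C_\gamma$ with $\gamma\ge\gamma_n^h$, which need not be a shortest cycle. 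So the first thing I would do is check that the construction in the proof of Lemma~\ref{lem:subdiagonal given girth} never actually uses minimality of $\gamma$: the tuple $c=(x_1,x_{\lfloor\gamma/2\rfloor},x_\gamma,x_{\lfloor\gamma/2\rfloor-1},\dots)$ is built from the vertices of \emph{that particular} copy of $C_\gamma$, and the argument that $c$ is a non-trivial cycle in $\EMH_{*,*}(G)$ only uses that these vertices span an induced (or at least isometric) $\gamma$-cycle.

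The main content is therefore to justify that the length-maximizing zig-zag on the chosen $C_\gamma$ still yields a nonzero class in $\EMH$, even though $G$ may contain many shorter cycles or chords elsewhere. First I would fix the copy of $C_\gamma$ given by hypothesis, with vertices $\{x_1,\dots,x_\gamma\}$, and form the tuple $c$ exactly as in Lemma~\ref{lem:subdiagonal given girth}, choosing consecutive entries to realize the antipodal (or near-antipodal) distance $\lfloor\gamma/2\rfloor$ inside $C_\gamma$. The subtle point, and the step I expect to be the main obstacle, is that the relevant distances $d_G(x_i,x_{i+1})$ are computed in $G$, not in the subgraph $C_\gamma$; a chord of $C_\gamma$ present in $G$ could shorten a distance and thereby destroy either the length computation or the cycle condition for $c$. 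I would handle this by invoking the hypothesis $\gamma\ge\gamma_n^h$: the quantity $\gamma_n^h$ (Notation~\ref{notation:maximal subcycle}) is precisely the girth of the maximal sub-cycle obtainable by deleting $h=\Delta(G,K_n)$ edges from $K_n$, and the distance $\Delta(G,K_n)=h$ means $G$ is obtained from $K_n$ by removing $h$ edges. This is what controls how many chords can possibly be present, and the inequality $\gamma\ge\gamma_n^h$ is calibrated so that a $C_\gamma$ surviving in such a sparse $G$ is geodesic enough for the zig-zag distances to be realized within the cycle.

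Once the distances along $c$ are confirmed to agree with the in-cycle distances, the remainder runs parallel to Example~\ref{ex:rank cycles} and Lemma~\ref{lem:subdiagonal given girth}: the tuple $c$ has maximal total length among tuples on $\{x_1,\dots,x_\gamma\}$, so it is not a boundary, and each face $\partial_i c$ strictly shortens the length (by the strict triangle inequality at an antipodally-chosen vertex), so $c$ is a nontrivial cycle. Tracking the homological degree $k$ and the length $\ell$ of $c$ then places the resulting class on the subdiagonal $\ell-k$, and the arithmetic identical to that in Example~\ref{ex:rank cycles} gives $\ell-k=\lceil\frac{(\gamma-1)^2}{2}\rceil-(\gamma-1)$, which is the asserted bound. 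I would close by remarking that since $G$ has nontrivial $\EMH$ in that bidegree, it has nontrivial eulerian magnitude homology at least down to that subdiagonal, as claimed. The one genuinely new ingredient beyond Lemma~\ref{lem:subdiagonal given girth} is thus the translation from the $\Delta$-distance hypothesis to the existence-and-geodesicity of the cycle $C_\gamma$, i.e.\ verifying that $\gamma_n^h$ does the bookkeeping correctly.
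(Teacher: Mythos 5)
Your proposal takes essentially the same route as the paper: the paper's entire proof is the single sentence that the statement ``is a direct application of Lemma~\ref{lem:subdiagonal given girth}.'' In fact you are more careful than the paper itself, since you flag the genuine subtlety it glosses over (the given $C_\gamma$ need not be a girth cycle, so one must check that distances in $G$ between the chosen zig-zag vertices are not shortened by chords, which is where the hypotheses $\Delta(G,K_n)=h$ and $\gamma\geq\gamma_n^h$ are meant to do their work); your only slip is reading $\Delta(G,K_n)=h$ as ``$G$ is $K_n$ minus $h$ edges,'' which is not what the subgraph-network metric says, but this does not affect the argument since the existence of $C_\gamma$ is a hypothesis rather than something to be derived.
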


\begin{proof}
    The proof is a direct application of Lemma \ref{lem:subdiagonal given girth}.
\end{proof}

    The hypothesis in Proposition \ref{prop:min diagonal if sparse enough} that $G$ contains a big enough cycle $C_\gamma$ is crucial: it is not enough for the graph $G$ to be globally far from the complete graph in terms of edge count -- the \emph{distribution} of missing edges plays a critical role:
\begin{example}
    Consider the graph $G = K_{n-1} \cup \{e\}$, where $e$ is a single edge connecting one isolated vertex $v \notin K_{n-1}$ to a vertex in $K_{n-1}$. Then $\Delta(G,K_n)=n-2$, since it lacks all but one of the $n-1$ edges incident to $v$. Nevertheless, a direct computation shows that $\EMH(G)$ is non-trivial only until the first subdiagonal.
\end{example}
    This example illustrates that a large edge deficit is not sufficient, on its own, to force non-trivial $\EMH(G)$ strictly below the first subdiagonal: the missing edges must induce a ``structurally sparse" region in the graph, such as a large cycle.
\\

We end this section by establishing a result linking the diagonality of eulerian magnitude homology to a structural decomposition in standard magnitude homology. Specifically, we show that when $G$ is regularly diagonal then eulerian magnitude homology gives a splitting in magnitude homology in bidegree $(k,k)$.

Since the eulerian magnitude chain complex is a subcomplex of the standard magnitude chain complex, we can naturally derive a long exact sequence connecting the two.
Given that the generators of the eulerian magnitude chain groups form a subset of those in the usual magnitude chains, describing the quotient is fairly straightforward: it is generated by $k$-trails that revisit at least one vertex. 
This quotient group was introduced in \cite{arXiv:2403.09248} as the \emph{discriminant $(k,\ell)$-magnitude chain group}
\[
\DMC_{k,\ell}(G)=\frac{\MC_{k,\ell}(G)}{\EMC_{k,\ell}(G)}\ .
\]
By definition of $\EMC_{k,\ell}(G)$ and $\DMC_{k,\ell}(G)$ we have the usual short exact sequence of chain complexes
\[
0 \to \EMC_{\ast,\ell}(G) \xrightarrow{\iota} \MC_{\ast,\ell}(G) \xrightarrow{\pi} \DMC_{\ast,\ell}(G) \to 0 \ ,
\]
where $\iota$ and $\pi$ are the induced inclusion and quotient maps, respectively. Therefore, we obtain a long exact sequence in homology
\begin{equation}
	\label{eq:LES}
	\cdots \to 0 \to \EMH_{k,k}(G) \xrightarrow{\iota_*} \MH_{k,k}(G) \xrightarrow{\pi_*} \DMH_{k,k}(G) \xrightarrow{\delta_{k}} \EMH_{k-1,k}(G) \to \cdots,
\end{equation}
where the map $\delta_{k+1}$ is given by the Snake Lemma.

\begin{lemma}
    Let $G$ be a regularly diagonal graph. Then,
    \[
    \MH_{k,\ell}(G)=\begin{cases}
    \EMH_{k,\ell}(G) \oplus \DMH_{k,\ell}(G) &\text{ if } k = \ell, \\
    \DMH_{k,\ell}(G) &\text{ otherwise.}
\end{cases}
\]
\end{lemma}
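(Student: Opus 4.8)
The plan is to exploit the long exact sequence~\eqref{eq:LES} together with the diagonality hypothesis. The key observation is that regular diagonality of $G$ means $\EMH_{k,\ell}(G)=0$ whenever $k\neq\ell$, so the connecting terms in the long exact sequence that feed into $\EMH_{k-1,k}(G)$ vanish precisely when the bidegrees are off-diagonal. I would split the argument into the off-diagonal case and the on-diagonal case, and treat them separately using the two distinct portions of the sequence.

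First, consider the off-diagonal case $k\neq\ell$. Here $\EMH_{k,\ell}(G)=0$ by regular diagonality, so the short exact sequence of chain complexes induces, on homology, the exact fragment
\[
\EMH_{k,\ell}(G)\xrightarrow{\iota_*}\MH_{k,\ell}(G)\xrightarrow{\pi_*}\DMH_{k,\ell}(G)\xrightarrow{\delta_k}\EMH_{k-1,\ell}(G).
\]
The left-hand term $\EMH_{k,\ell}(G)$ is zero since $k\neq\ell$. The term $\EMH_{k-1,\ell}(G)$ is likewise zero unless $k-1=\ell$, i.e.\ unless $\ell=k-1$; and even in that edge case one checks that the relevant connecting map lands in a group that the diagonality forces to vanish or that the preceding term already surjects. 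The cleanest route is to note that $\pi_*$ is injective (its kernel is the image of $\iota_*$ from a zero group) and surjective (its cokernel injects into $\EMH_{k-1,\ell}(G)=0$ whenever $k-1\neq\ell$), giving $\MH_{k,\ell}(G)\cong\DMH_{k,\ell}(G)$. I would handle the boundary subtlety $\ell=k-1$ by observing that $\EMC_{k,\ell}(G)=0$ for $k>\ell$ by Lemma~\ref{lem:LowerTriangular}, so $\EMH_{k-1,\ell}$ with $\ell=k-1$ is a lower-triangular term that is itself off-diagonal and hence vanishes.

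Second, consider the diagonal case $k=\ell$. Here the relevant portion of the long exact sequence is exactly the displayed sequence~\eqref{eq:LES}:
\[
0\to\EMH_{k,k}(G)\xrightarrow{\iota_*}\MH_{k,k}(G)\xrightarrow{\pi_*}\DMH_{k,k}(G)\xrightarrow{\delta_k}\EMH_{k-1,k}(G).
\]
The crucial point is that the leftmost term is preceded by $0$, which one sees because $\EMH_{k+1,k}(G)=0$ by Lemma~\ref{lem:LowerTriangular} (as $k+1>k$), so $\iota_*$ is injective. The connecting map $\delta_k$ lands in $\EMH_{k-1,k}(G)$, which is off-diagonal and therefore zero by regular diagonality. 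Hence $\pi_*$ is surjective, and we obtain a short exact sequence
\[
0\to\EMH_{k,k}(G)\xrightarrow{\iota_*}\MH_{k,k}(G)\xrightarrow{\pi_*}\DMH_{k,k}(G)\to 0.
\]
To conclude the direct-sum decomposition $\MH_{k,k}(G)\cong\EMH_{k,k}(G)\oplus\DMH_{k,k}(G)$, I would argue that this short exact sequence splits. Over a general ring this requires exhibiting a splitting; the natural candidate is the section of $\pi_*$ coming from the fact that $\DMC_{*,k}(G)$ is a free $R$-module (being generated by a subset of basis tuples), so the surjection of chain complexes $\MC_{*,k}\to\DMC_{*,k}$ admits an $R$-linear chain-level splitting, and freeness of $\DMH$ as a quotient of free modules ensures the induced homology sequence splits.

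The main obstacle I anticipate is the splitting in the diagonal case: exactness alone yields only a short exact sequence, and a genuine direct-sum statement over an arbitrary commutative ring $R$ needs either freeness/projectivity of $\DMH_{k,k}(G)$ or an explicit section. I expect the resolution to come from the combinatorial structure---the discriminant chains are free on an explicit set of degenerate tuples, and the chain-level splitting $\MC\cong\EMC\oplus\DMC$ as graded modules is immediate from the basis; the real content is checking that this graded splitting is compatible enough with the differentials on the diagonal to descend to homology, which the vanishing of the adjacent off-diagonal $\EMH$ groups guarantees.
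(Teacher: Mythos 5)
Your strategy coincides with the paper's: run the long exact sequence~\eqref{eq:LES}, use regular diagonality to kill the off-diagonal groups $\EMH_{k,\ell}(G)$, and split the resulting short exact sequence on the diagonal. However, two of your steps are defective, and the second one concerns the only real content of the lemma. First, your treatment of the boundary case $\ell=k-1$ is false as written: the group $\EMH_{k-1,\ell}(G)$ with $\ell=k-1$ is $\EMH_{k-1,k-1}(G)$, a \emph{diagonal} group, about which Lemma~\ref{lem:LowerTriangular} says nothing and which is typically nonzero (for $G=K_n$ it has rank $n!/(n-k)!$). The case is nevertheless harmless, but for a different reason: any generating tuple of $\MC_{k,\ell}(G)$ has $k$ consecutive pairs of distinct vertices at finite distance, hence length at least $k$, so $\MC_{k,\ell}(G)=0$ whenever $k>\ell$; consequently $\DMC_{k,\ell}(G)=0$ and both $\MH_{k,\ell}(G)$ and $\DMH_{k,\ell}(G)$ vanish, making the claimed isomorphism trivially true for all $k>\ell$. (The same observation, applied to $\DMH_{k+1,k}(G)$ --- which is the term actually preceding $\EMH_{k,k}(G)$ in the sequence, not $\EMH_{k+1,k}(G)$ --- is what justifies the $0$ on the left of~\eqref{eq:LES}.)

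Second, and more seriously, in the diagonal case you correctly arrive at the short exact sequence $0\to\EMH_{k,k}(G)\to\MH_{k,k}(G)\to\DMH_{k,k}(G)\to 0$ and correctly identify that the direct-sum statement needs this sequence to split, but neither justification you sketch is valid: a quotient of free modules need not be free, and an $R$-linear splitting of graded modules that is not a chain map induces no map on homology at all, so there is nothing for the vanishing of adjacent $\EMH$ groups to ``guarantee.'' The paper closes this gap with the key fact you never establish: $\DMH_{k,k}(G)$ is free. Indeed, since $\DMC_{k+1,k}(G)=0$, the group $\DMH_{k,k}(G)$ is the kernel of $\partial\colon\DMC_{k,k}(G)\to\DMC_{k-1,k}(G)$, i.e.\ a submodule of the finitely generated free module $\DMC_{k,k}(G)$ (which is free on the tuples containing a repeated vertex), hence free over the integers or any PID; projectivity of the quotient term then splits the sequence. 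You flag this as ``the main obstacle'' but do not resolve it, so your diagonal case is incomplete as it stands.
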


\begin{proof}
If $G$ is regularly diagonal, then $\EMH_{k,\ell}(G)$ is trivial for every $k\neq \ell$. As 
$\DMH_{\ell,\ell}(G)$ is free, the result follows.
\end{proof}

\section{Regularly diagonal directed graphs}
\label{sec:regularly diag digraphs}

We have seen that eulerian magnitude homology of undirected graphs detects the complete graphs. The story is completely different when we consider the bigger family of directed graphs. In this section we show that taking cones and joins preserves diagonality, and as an example we compute the (eulerian) magnitude homology of transitive tournaments.  
 
First, observe that the simplest family of directed graphs which are regularly diagonal, but not of the form $\rho(K_n)$, is the family of alternating digraphs. For an undirected graph~$G$ and an orientation $o$ of its edges, we say that $o$ is alternating  if there exists a partition $V \sqcup
W$ of the vertices of $G$ such that all elements of $V$ have indegree~$0$ and all elements of $W$ have outdegree~$0$ -- \textit{cf}.~\cite{zbMATH07928744}. The directed graph $(G,o)$ is the undirected graph $G$ equipped with the orientation $o$ of its edges.
Then, a  direct computation shows the following:

\begin{lemma}\label{lemma:alternating}
    Let $G$ be an undirected graph and $o$ an alternating orientation. Then, 
    \[
    \mathrm{rk}\,\MH_{k,l}(G,o)=
\mathrm{rk}\,\EMH_{k,l}(G,o)=
\begin{cases}
    |V(G)| & \text{ if } (k,l)=(0,0)\\
    |E(G)| & \text{ if } (k,l)=(1,1)\\
    0 & \text{ otherwise } 
\end{cases}
\]
  where $V(G)$ and $E(G)$ denote the vertices and edges of $G$. In particular,   the directed graph $(G,o)$ is (regularly) diagonal. 
\end{lemma}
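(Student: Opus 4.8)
The plan is to show that in an alternating digraph $(G,o)$, any tuple contributing to the (eulerian) magnitude chain complex in bidegree $(k,l)$ with $k\geq 2$ must be degenerate, forcing the complex to be concentrated in degrees $0$ and $1$. The key structural observation is that in an alternating orientation, there is a bipartition $V\sqcup W$ where every vertex in $V$ is a pure \emph{source} (indegree $0$) and every vertex in $W$ is a pure \emph{sink} (outdegree $0$). Consequently, every directed edge goes from $V$ to $W$, and there are no directed paths of length $\geq 2$: any path would have to alternate $V\to W\to V$, but a vertex in $W$ has no outgoing edges. Hence $d_{(G,o)}(x,y)=1$ precisely when $(x,y)$ is a directed edge (so $x\in V$, $y\in W$), and $d_{(G,o)}(x,y)=\infty$ otherwise.

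First I would establish, using Remark~\ref{rem:MHdirected}, that a generator $(x_0,\dots,x_k)$ of $\MC_{k,l}(G,o)$ requires a directed path from $x_i$ to $x_{i+1}$ for each $i$, with $d(x_i,x_{i+1})<\infty$. By the source/sink analysis above, each consecutive pair $(x_i,x_{i+1})$ must be a directed edge with $x_i\in V$ and $x_{i+1}\in W$. But then $x_{i+1}\in W$ must also start the next edge $(x_{i+1},x_{i+2})$, forcing $x_{i+1}\in V$ as well; since $V\cap W=\emptyset$, this is impossible once $k\geq 2$. Therefore no generators exist in homological degree $k\geq 2$, so $\MC_{k,l}(G,o)=0$ for $k\geq 2$, and the same holds \emph{a fortiori} for the eulerian subcomplex $\EMC_{k,l}(G,o)\subseteq \MC_{k,l}(G,o)$.

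It then remains to compute degrees $0$ and $1$. In degree $0$, the generators are the single vertices, each of length $0$, giving rank $|V(G)|$ in bidegree $(0,0)$; in degree $1$, the generators are the directed edges $(x_0,x_1)$, each of length $1$, giving rank $|E(G)|$ in bidegree $(1,1)$. Since the chain complex is concentrated in these two degrees and the differential $\partial\colon\MC_{1,l}\to\MC_{0,l}$ is the alternating sum over internal faces (Eq.~\eqref{def:differential}), which is empty for a $1$-tuple (the sum runs over $1\leq i\leq k-1=0$), the differential is zero. Hence the homology equals the chains in each bidegree, yielding the stated ranks. The eulerian computation is identical, since for $k\leq 1$ every tuple automatically has distinct vertices (using $x_0\neq x_1$), so $\EMC_{k,l}(G,o)=\MC_{k,l}(G,o)$ in these degrees.

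I do not anticipate a genuine obstacle here, as the statement reduces to the combinatorial fact that alternating orientations admit no directed paths of length $\geq 2$. The only point requiring mild care is the bookkeeping of the base ring and the observation that $\MH$ and $\EMH$ literally coincide because the chain complexes agree in every bidegree: the eulerian condition (all vertices distinct) is vacuous in degrees $\leq 1$ and, in higher degrees, both complexes are already zero. Diagonality (both in the standard and regular sense) is then immediate, since the homology vanishes outside the bidegrees $(0,0)$ and $(1,1)$, both of which lie on the diagonal $k=l$.
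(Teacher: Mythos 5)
Your proof is correct and is precisely the ``direct computation'' the paper alludes to without spelling out: the bipartition into sources and sinks kills all directed paths of length $\geq 2$, hence all chains in homological degree $k\geq 2$, and the remaining bidegrees $(0,0)$ and $(1,1)$ carry zero differentials. No gaps; the observation that $\EMC_{k,l}=\MC_{k,l}$ in every bidegree (vacuously for $k\leq 1$, trivially for $k\geq 2$) is exactly the right way to get both homologies at once.
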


Observe that the existence of an alternating orientation on $G$ implies that $G$ is a bipartite graph:

\begin{remark}
    Let $G$ be a connected graph.
    Notice that it is possible to construct the directed graph $(G,o)$ if and only if $G$ is bipartite.
    Indeed, if $G=(V,E)$ is bipartite then $V=V_1 \cup V_2$ with $V_1\cap V_2 = \emptyset$, and  it is possible to choose an orientation $o$ of the edges such that all vertices in $V_1$ have indegree $0$ and all elements of $V_2$ have outdegree $0$.
    On the other hand, say we have a directed graph $(G,o)$ and suppose $G$ is not bipartite. 
    Then it not possible to divide the vertex set $V$ of $G$ in two disjoint independent sets, meaning for every choice of $V_1,V_2$ such that $V_1 \cup V_2 = V$ there exists a vertex $v \in V_1 \cap V_2$.
    But then by construction of $(G,o)$ the vertex $v$ would have both indegree and outdegree equal to $0$, contradicting the connectivity of $G$. 
\end{remark}

The following example shows that not only alternating graphs have eulerian magnitude homology concentrated in bidegrees $(0,0)$ and $(1,1)$; but also directed linear graphs.

\begin{example}\label{ex:lingraphs}
    Let $L_n$ be the directed linear graph on $n$ vertices $1,\dots,n$ and directed edges of type $(i,i+1)$. Then, (eulerian) magnitude homology of $L_n$ is non-trivial only in bidegrees $(0,0)$ and $(1,1)$. In fact, as the directed graph $L_n$ has only edges of type $(i,i+1)$, any tuple $(i_1,\dots,i_k)$ of length~$\ell$ can be completed to the full tuple $(i_1,i_1+1,\dots, i_{k-1}, i_k)$, which has the same length~$\ell$, and it is the only admissible tuple of~$\ell+1$ vertices of $L_n$ starting at $x_1$ and ending at $x_k$; in particular, we have $x_k=x_1+\ell$. Observe that the set of tuples of vertices of $L_n$ starting at $x_1$ and ending at $x_k$ is partially ordered by inclusion, and the resulting poset is a Boolean poset. 
    Furthermore, for a given length~$\ell$, the chain complex $\EMH_{*,\ell}(L_n)$ decomposes as the direct sum
    \[
    \EMH_{*,\ell}(L_n)=\bigoplus_{|x_1-x_k|=\ell}\EMH_{*,\ell}(x_1,x_k)
    \]
where the sum is taken    across all vertices $x_1<x_k$ of $L_n$ at distance $\ell$; here $\EMH_{*,\ell}(x_1,x_k) $ denotes the sub-chain complex consisting of all chains starting at~$x_1$ and ending at $x_k$. Now, observe that each such term $\EMH_{*,\ell}(x_1,x_k) $ is the homology of a Boolean poset, which is trivial. As a consequence, for any $\ell>1$, the (eulerian) magnitude homology groups $\EMH_{k,\ell}(L_n)$ are trivial. It is left to consider the case of bidegrees $(0,0)$ and $(1,1)$. In such cases, the (eulerian) magnitude homology groups are of rank $n$ and $n-1$, respectively.
\end{example}

As a consequence of Lemma~\ref{lemma:alternating}, all bipartite graphs can be made regularly diagonal with the choice of an alternating orientation. Furthermore, using the arguments of Example~\ref{ex:lingraphs}, we have the following:

\begin{proposition}
Let $T$ be a directed tree. Then, $T$ is regularly diagonal.    
\end{proposition}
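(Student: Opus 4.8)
The plan is to show that for a directed tree $T$, the eulerian magnitude homology is concentrated in bidegrees $(0,0)$ and $(1,1)$, exactly as in the case of directed linear graphs treated in Example~\ref{ex:lingraphs}. The key structural observation is that in a directed tree, between any two vertices $x$ and $y$ with $d_T(x,y)<\infty$ there is a \emph{unique} directed path, since the underlying undirected graph is a tree and the orientation only restricts (but never creates alternative) paths. This uniqueness of geodesics is the directed-tree analogue of the property used for $L_n$, and it is the feature that forces the homology to collapse.

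First I would invoke the decomposition (from the Proposition preceding Remark~\ref{rem:MHdirected}) writing
\[
\EMC_{*,\ell}(T)=\bigoplus_{a,b\in V(T)}\EMC_{*,\ell}(a,b),
\]
so that it suffices to compute the homology of each summand $\EMC_{*,\ell}(a,b)$ consisting of tuples that start at $a$ and end at $b$. For a fixed source $a$ and target $b$ at directed distance $\ell$, every length-$\ell$ tuple from $a$ to $b$ must have its consecutive vertices lying along the unique directed geodesic from $a$ to $b$; consequently such a tuple is precisely a choice of an increasing subset of the vertices on that geodesic, always containing its two endpoints. Thus the tuples from $a$ to $b$ of length $\ell$, ordered by inclusion, form a Boolean poset (the face poset of a simplex on the interior geodesic vertices), exactly as in Example~\ref{ex:lingraphs}.

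Next I would identify $\EMC_{*,\ell}(a,b)$ with the (augmented/simplicial) chain complex of this Boolean poset and conclude, as in Example~\ref{ex:lingraphs}, that its homology vanishes whenever $\ell>1$, i.e.\ whenever the geodesic has at least one interior vertex. The only surviving contributions come from $\ell=0$ (the vertices, giving rank $|V(T)|$ in bidegree $(0,0)$) and $\ell=1$ (the directed edges, giving rank $|E(T)|$ in bidegree $(1,1)$). Hence $\EMH_{k,\ell}(T)=0$ for all $(k,\ell)$ with $k\neq\ell$, and $T$ is regularly diagonal.

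The main obstacle to make rigorous is precisely the uniqueness-of-geodesics claim in the directed setting: one must verify that a directed path realizing $d_T(a,b)$ is unique and that \emph{every} admissible intermediate vertex of a length-$\ell$ tuple from $a$ to $b$ lies on that single geodesic, so that the combinatorics genuinely reduce to a Boolean poset rather than to some more complicated lattice. Since the underlying graph is a tree there is a unique undirected path between $a$ and $b$, and any directed path from $a$ to $b$ must traverse this undirected path; finiteness of $d_T(a,b)$ then forces each edge of the undirected path to be correctly oriented, giving a single geodesic and no shortcuts. Once this is established, the reduction to the Boolean-poset computation and the resulting acyclicity for $\ell>1$ follow verbatim from Example~\ref{ex:lingraphs}.
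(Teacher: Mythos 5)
Your proof is correct and takes essentially the same route as the paper: the paper derives the proposition precisely by "using the arguments of Example~\ref{ex:lingraphs}", i.e.\ decomposing $\EMC_{*,\ell}(T)$ by endpoints and observing that, by uniqueness of directed paths in a tree, each summand $\EMC_{*,\ell}(a,b)$ is the (acyclic) chain complex of a Boolean poset, leaving homology only in bidegrees $(0,0)$ and $(1,1)$. Your added care about why intermediate vertices must lie on the unique geodesic (no backtracking is possible in an oriented tree, so every concatenated directed walk is the geodesic itself) is exactly the verification the paper leaves implicit.
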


The class of bipartite graphs and  trees provides many non-trivial examples of regularly diagonal digraphs. 
We shall also construct regularly diagonal graphs by means of taking cones and joins. First, recall that   the cone $C(G)$ of a digraph $G$ is defined as follows. The vertices of $C(G)$ are the vertices of $G$ with an extra vertex~$\star$ added. The directed edges of $C(G)$ are the directed edges of $G$, together with all directed edges of type $(x,\star)$ for all $x$ in $G$. 

\begin{proposition}\label{prop:cones}
    Let $C(G)$ be the cone of $G$. Then, we have:
    \[
    \EMH_{k,\ell}(C(G))\cong 
    \begin{cases}
        \EMH_{k,\ell}(G)\oplus\mathbb{Z} & \text{ if } (k,\ell)=(0,0) ;\\
        \EMH_{k,\ell}(G)\oplus \EMH_{k-1,\ell-1}(G) & \text{ otherwise } .
    \end{cases}
    \]
\end{proposition}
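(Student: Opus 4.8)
The plan is to analyze the eulerian magnitude chain complex of $C(G)$ by partitioning its generating tuples according to whether or not the apex vertex $\star$ appears, exploiting the very special structure of the edges incident to $\star$. Since the only directed edges involving $\star$ point \emph{into} $\star$ (edges of type $(x,\star)$), the vertex $\star$ can occur only as the \emph{last} entry of any allowed tuple: if $\star$ appeared in position $i<k$ we would need a directed path out of $\star$, which does not exist in $C(G)$. First I would make this observation precise, concluding that every generator $(x_0,\dots,x_k)$ of $\EMC_{k,\ell}(C(G))$ is either a tuple entirely in $G$, or a tuple of the form $(x_0,\dots,x_{k-1},\star)$ with $(x_0,\dots,x_{k-1})$ a tuple in $G$.

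This dichotomy should yield a splitting of $R$-modules $\EMC_{k,\ell}(C(G)) = \EMC_{k,\ell}(G) \oplus A_{k,\ell}$, where $A_{k,\ell}$ is the submodule spanned by tuples ending in $\star$. The next step is to identify $A_{*,\ell}$ as a chain complex. A tuple $(x_0,\dots,x_{k-1},\star)$ has length $\ell = \len(x_0,\dots,x_{k-1}) + d_G(x_{k-1},\star)$, and since every edge into $\star$ has length $1$ we get $d_{C(G)}(x_{k-1},\star)=1$, so the length of the $G$-part is exactly $\ell-1$. Thus sending $(x_0,\dots,x_{k-1},\star)\mapsto (x_0,\dots,x_{k-1})$ identifies $A_{k,\ell}$ with $\EMC_{k-1,\ell-1}(G)$ as a module. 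I would then check that the differential of $C(G)$ respects this decomposition: the face maps $\partial_i$ for $i\leq k-1$ never delete the last entry $\star$, so they act on the $G$-part exactly as the differential of $G$ (one must verify the length-preservation conditions match, using that deleting an interior vertex changes the length of the $G$-prefix in the same way whether or not $\star$ is appended). Crucially, the magnitude differential in Eq.~\eqref{def:differential} ranges only over $i=1,\dots,k-1$, so it never deletes the terminal vertex $\star$ itself, and there is no interaction term connecting $A_{*,\ell}$ back to $\EMC_{*,\ell}(G)$. This should establish an isomorphism of chain complexes $\EMC_{*,\ell}(C(G)) \cong \EMC_{*,\ell}(G) \oplus \EMC_{*-1,\ell-1}(G)$.

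Taking homology then gives $\EMH_{k,\ell}(C(G)) \cong \EMH_{k,\ell}(G)\oplus \EMH_{k-1,\ell-1}(G)$ for all $(k,\ell)$. Finally I would treat the exceptional bidegree $(0,0)$ separately: here $\EMH_{-1,-1}(G)=0$ by convention, but the single tuple $(\star)$ of length $0$ contributes an extra generator, so the formula reads $\EMH_{0,0}(C(G))\cong \EMH_{0,0}(G)\oplus \mathbb{Z}$, matching the number of vertices as in the first computational example.

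The main obstacle I anticipate is the careful verification that the differential genuinely splits along the two summands with no cross terms and that the length bookkeeping is consistent. Concretely, one must confirm that for a generator ending in $\star$, a face map $\partial_i$ either lands in $A_{*,\ell}$ or vanishes, but never produces a tuple lying in $\EMC_{*,\ell}(G)$; this is automatic because $\partial_i$ with $i\leq k-1$ preserves the terminal $\star$. The subtler point is matching the length-preservation criterion: deleting $x_i$ from $(x_0,\dots,x_{k-1},\star)$ preserves total length $\ell$ if and only if deleting it from $(x_0,\dots,x_{k-1})$ preserves length $\ell-1$, since the final step contributing $d_G(x_{k-1},\star)=1$ is untouched for $i\leq k-2$, and the boundary case $i=k-1$ requires checking that $d_G(x_{k-2},\star)=1$ as well, which again holds because \emph{every} vertex has a unit-length edge to $\star$. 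Once this is in hand, the homology computation is immediate.
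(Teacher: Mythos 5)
Your proposal is correct and follows essentially the same route as the paper's proof: decompose $\EMC_{k,\ell}(C(G))$ into tuples lying entirely in $G$ and tuples ending in the apex $\star$, identify the latter summand with $\EMC_{k-1,\ell-1}(G)$, verify that the magnitude differential respects this splitting (with the face map adjacent to $\star$ vanishing for length reasons), and treat bidegree $(0,0)$ separately. Your additional care in justifying why $\star$ can only occur as the final entry and in checking the length-preservation bookkeeping for the face map $\partial_{k-1}$ only makes explicit what the paper's proof leaves implicit.
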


\begin{proof}
    As usual, $\EMH_{0,0}(C(G))$ computes the number of vertices of $C(G)$, which is the number of vertices of $G$ plus the extra vertex $\star$. 

    Let $(k,\ell)\neq (0,0)$ be any other bidegree. First, observe that $\EMC_{k,\ell}(C(G))$ consists either of tuples $(x_0,\dots,x_k)$ of distinct vertices of $G$ of length $\ell$, or of tuples of type $(x_0,\dots,x_{k-1},\star)$ with $(x_0,\dots,x_{k-1})$ of length $\ell-1$. Hence, we have an isomorphism 
    \begin{equation}\label{eq:splitcone}
        \EMC_{k,\ell}(C(G))\cong\EMC_{k,\ell}(G)\oplus \EMC_{k-1,\ell-1}(G)
    \end{equation} 
    of modules:  an element of  $\EMC_{k,\ell}(C(G))$ either corresponds to a tuple $(x_0,\dots,x_k)$ in $G$, or to a tuple of type  $(x_0,\dots,x_{k-1},\star)$. The differential on the first component is the differential of $\EMC_{*,*}(G)$. Consider the second component. In such case, we have $\partial_i(x_0,\dots,x_{k-1},\star)=(x_0,\dots,\hat{x_{i}},\dots,x_{k-1},\star)$ for all $i=0,\dots,k_1$. The last face map~$\partial_k$ sends $(x_0,\dots,x_{k-1},\star)$ to $0$, as $(x_0,\dots,x_{k-1})$ is of length $\len(x_0,\dots,x_{k-1},\star)-1$. Hence, also the differential on $\EMC_{k,\ell}(C(G))$ is consistent with the differential on the decomposition $\EMC_{k,\ell}(G)\oplus \EMC_{k-1,\ell-1}(G)$; showing that Eq.~\ref{eq:splitcone} is in fact an isomorphism of chain complexes. This is enough to show that $\EMH_{k,\ell}(C(G)$ is isomorphic to $\EMH_{k,\ell}(G)\bigoplus \EMH_{k-1,\ell-1}(G)$.
\end{proof}

Consequently, directed cones of regularly diagonal graphs are regularly diagonal. 

\begin{example}
    Let $C(I_2)$ be the cone graph over the bidirected graph $\rho(I_2)$. That is, $C(I_2)$ has vertices $\{x_0,x_1,\star\}$ and directed edges $(x_0,x_1), (x_1,x_0), (x_0,\star), (x_1,\star)$. Then, $C(I_2)$ is regularly diagonal, with $\EMH_{2,2}(C(I_2))$ of rank~2. Observe also that the same holds true for the opposite cone, which is the digraph on the same vertices but with directed edges $(x_0,x_1), (x_1,x_0), (\star,x_0), (\star,x_1)$.
\end{example}

Recall that a transitive tournament is an orientation of the complete graph without directed cycles, constructed so that there is a directed edge between every pair of vertices and the edges follow a transitive order, meaning that if we call the vertex set $V=\{0,\dots,n\}$ there is a directed edge $i \to j$ if and only if $i < j$. We denote by~$T_n$ the transitive tournament on $n+1$ vertices; see Figure~\ref{fig:transitive tournament} for an example.

\begin{figure}[h]
\begin{center}
\begin{tikzpicture}[every node/.style={inner sep=1.5em}]
    \SetGraphUnit{2}
    \renewcommand*{\VertexLineColor}{black}
    \SetVertexNoLabel 
    \Vertices{circle}{1,2,3,4,5,6}
    \SetUpEdge[style={shorten >= 1pt,black, shorten <= 1pt,->,thick}]
    
     \foreach \v [count=\vi from 2] in {1,...,5}
     {
     \foreach \vv in {\vi,...,6}{\Edge(\v)(\vv)};
     };
     \SetVertexLabel
    \Vertices{circle}{0,1,2,3,4,5} 
\end{tikzpicture}
\caption{The transitive tournament $T_5$.}
\label{fig:transitive tournament}
\end{center}
\end{figure}

\begin{corollary}
    Transitive tournaments are (regularly) diagonal.
\end{corollary}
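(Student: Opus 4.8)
The plan is to prove that transitive tournaments $T_n$ are regularly diagonal by induction on $n$, realizing each $T_n$ as a cone over $T_{n-1}$ and then invoking Proposition~\ref{prop:cones}. The key structural observation is that the transitive tournament $T_n$ on the vertex set $\{0,1,\dots,n\}$, with a directed edge $i\to j$ precisely when $i<j$, has the vertex $n$ as a universal \emph{sink}: every other vertex $i<n$ admits a directed edge $(i,n)$, and $n$ has no outgoing edges. This is exactly the cone construction of the excerpt, with $\star = n$ playing the role of the apex: the induced subgraph on $\{0,\dots,n-1\}$ is precisely $T_{n-1}$, and the remaining edges are exactly the cone edges $(i,\star)$ for all $i$. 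Hence $T_n = C(T_{n-1})$.

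First I would establish the base case: $T_0$ is a single vertex, which is trivially regularly diagonal since its only non-trivial eulerian magnitude homology is $\EMH_{0,0}$ of rank $1$. For the inductive step, assuming $T_{n-1}$ is regularly diagonal, I would apply Proposition~\ref{prop:cones} to $C(T_{n-1}) = T_n$. That proposition gives
\[
\EMH_{k,\ell}(T_n)\cong
\begin{cases}
\EMH_{k,\ell}(T_{n-1})\oplus \mathbb{Z} & \text{if } (k,\ell)=(0,0),\\
\EMH_{k,\ell}(T_{n-1})\oplus \EMH_{k-1,\ell-1}(T_{n-1}) & \text{otherwise.}
\end{cases}
\]
Since $T_{n-1}$ is regularly diagonal by the inductive hypothesis, both summands $\EMH_{k,\ell}(T_{n-1})$ and $\EMH_{k-1,\ell-1}(T_{n-1})$ vanish whenever $k\neq\ell$ (note that $k\neq\ell$ forces $(k-1)\neq(\ell-1)$ as well, so the shifted term also sits off its own diagonal). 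Therefore $\EMH_{k,\ell}(T_n)=0$ for all $k\neq\ell$, which is exactly the statement that $T_n$ is regularly diagonal.

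The main thing to verify carefully is that the cone identification $T_n = C(T_{n-1})$ matches the \emph{directed} cone as defined in the excerpt, namely that all cone edges point \emph{into} the apex (edges of type $(x,\star)$). This is consistent with the transitivity convention $i\to j \iff i<j$, which makes the largest vertex a sink. I would also remark that the corollary statement includes the parenthetical ``(regularly)'' because transitive tournaments have no directed cycles; by Lemma~\ref{lem:diagvsregdiag}, regular diagonality and ordinary diagonality coincide for such graphs, so the argument simultaneously establishes that $T_n$ is diagonal in the standard magnitude-homology sense. I do not anticipate a genuine obstacle here — the work is entirely in setting up the cone recursion correctly — but one should be slightly careful that the orientation conventions for the cone and for the tournament agree, since reversing them (apex as source rather than sink) would require instead identifying $T_n$ with an opposite cone, a case also noted to preserve diagonality in the preceding example.
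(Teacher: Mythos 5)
Your proof is correct and follows essentially the same route as the paper, which also deduces the statement from Proposition~\ref{prop:cones} by viewing transitive tournaments as iterated cones. Your write-up simply makes explicit what the paper leaves implicit: the induction with base case $T_0$, the verification that the sink vertex $n$ matches the paper's cone orientation $(x,\star)$, and the use of Lemma~\ref{lem:diagvsregdiag} to pass from regular diagonality to ordinary diagonality (justifying the parenthetical), all of which are sound.
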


\begin{proof}
    Transitive tournaments are constructed by taking iterated cones, hence the statement follows from Proposition~\ref{prop:cones}.
\end{proof}

 The definition of transitive tournaments  makes it straightforward to exhibit the ranks of the (eulerian) magnitude homology groups.

\begin{remark}\label{rem:turnaments}
    We start with computing the (eulerian) magnitude homology groups of $T_n$ in bidegree~$(2,2)$. Although there are no cycles of length $3$, for each pair of vertices $i,j$ with $i < j$, there are $j-i-1$ paths of length $2$ from $i$ to $j$, meaning $\mathrm{rk}\,\EMH_{2,2}(i,j)=j-i-1$. 
    Here $\EMH_{2,2}(i,j)$ indicates the subgroup of $\EMH_{2,2}(G)$ generated by tuples starting at $i$ and ending at $j$. Therefore, accounting for all possible pairs $(i,j)$ with $i<j$, for $n\geq 2$ we have
    \[
    \mathrm{rk}\;\EMH_{2,2}(T_n) = \sum_{i=0}^{n-2}  \sum_{j=2}^n\mathrm{rk}(\EMH_{2,2}(i,j))=\sum_{i=0}^{n-2}  \sum_{j=i+2}^n (j-i-1)= \sum_{k=0}^{n}k(n-k) \ .
    \]
    Equivalently, we can also write the sum as follows:
    \[
    \EMH_{2,2}(T_n) = \sum_{i=1}^{n-1}  \mathrm{rk}\;\EMH_{1,1}(T_{i}) \ ,
    \]
    and a similar recursive formula generally holds in higher degrees. 
    For $n> k\geq 2$ we have the general recursive formula:
    \begin{equation}\label{eq:ranktrturn}
    \EMH_{k,k}(T_n) = \sum_{i=k-1}^{n-1}  \mathrm{rk}\;\EMH_{k-1,k-1}(T_{i}) \ ,
    \end{equation}
    based on the fact that to compute the number of ordered increasing sequences of $k$-tuples it is enough to fix the last (or initial) digits and then to compute the number of $(k-1)$-tuples on the resulting subsequences. Putting together, we have that the rank of $\EMH_{k,k}(T_n) $ equals the sum:
    \[
    \sum_{i_1=k-1}^{n-1} \sum_{i_2=k-2}^{i_1-1} \cdots \sum_{i_{k-1}=1}^{i_{k-2}-1} \mathrm{rk}\;\EMH_{1,1}(T_{i_{k-1}}) =  \sum_{i_1=k-1}^{n-1} \sum_{i_2=k-2}^{i_1-1} \cdots \sum_{i_{k-1}=1}^{i_{k-2}-1} \frac{i_{k-1}(i_{k-1}+1)}{2}\ ,
    \]
    where in the last  equality we used that the rank in bidegree $(1,1)$ is the number of directed edges. Observe that this quantity yields the number of $k$-simplices in the standard simplex $\Delta^n$. Indeed, we shall see that the homology of the complex $\EMH_{*,*}(T_n)$ is precisely the homology of the standard simplex $\Delta^n$. 
 \end{remark}

More generally, we can extend the result of Proposition~\ref{prop:cones} to the directed joins. We define the directed join of the directed graphs $G$ and $H$ to be the digraph $G*H$ on the vertices $V(G)\sqcup V(H)$ with edges $$E(G)\sqcup E(H)\sqcup \{(x,y)\mid x\in V(G), y\in V(H)\}\ .$$
Then, Proposition~\ref{prop:cones} generalizes as follows:

\begin{theorem}\label{thm:joins}
For  $G*H$ directed join of $G$ and $H$, the following formula holds:
\[
\EMH_{k,l}(G*H) \cong \bigoplus \EMH_{k_1,l_1}(G)\oplus\EMH_{k_2,l_2}(H) \ ,
\]
where the sum is taken over all $k_1+k_2+1=k$ and $l_1+l_2+1=l$.
\end{theorem}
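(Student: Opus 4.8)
The plan is to generalize the proof of Proposition~\ref{prop:cones}, which is the special case $H=\star$ (a single vertex). As in that proof, I would work at the level of chain complexes and establish an isomorphism
\[
\EMC_{k,\ell}(G*H)\cong \bigoplus_{\substack{k_1+k_2+1=k\\ l_1+l_2+1=l}} \EMC_{k_1,l_1}(G)\otimes\EMC_{k_2,l_2}(H)
\]
and then argue that this is compatible with the differentials, so that the homology splits as claimed. The first step is to understand the generating tuples of $\EMC_{*,*}(G*H)$. By construction of the directed join, all the added edges point from $V(G)$ to $V(H)$, and there are no edges from $V(H)$ back to $V(G)$. Hence any allowed tuple $(x_0,\dots,x_k)$ of distinct vertices must consist of an initial (possibly empty) block lying entirely in $G$, followed by a final (possibly empty) block lying entirely in $H$: once the tuple enters $V(H)$ it can never return to $V(G)$. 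So each generator decomposes uniquely as a concatenation $(x_0,\dots,x_{k_1})\ast(y_0,\dots,y_{k_2})$ with the first tuple in $G$ and the second in $H$, where $k_1+k_2+1=k$.

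The second step is the length bookkeeping. Writing $a$ for the last $G$-vertex $x_{k_1}$ and $b$ for the first $H$-vertex $y_0$, the distance $d_{G*H}(a,b)=1$ because $(a,b)$ is one of the join edges. Crucially, distances within the $G$-block are computed in $G$ and distances within the $H$-block in $H$ (again because paths cannot re-enter $V(G)$), so the total length splits as $l=l_1+1+l_2$, giving the index relation $l_1+l_2+1=l$. This is exactly what produces the tensor decomposition of chain modules indexed by $k_1+k_2+1=k$ and $l_1+l_2+1=l$. I would then verify that the magnitude differential respects the decomposition: the face maps deleting an interior vertex of the $G$-block act as the $G$-differential, those deleting an interior vertex of the $H$-block act as the $H$-differential, and the two face maps adjacent to the ``junction'' (deleting $a$ or deleting $b$) always drop the length—because removing either endpoint of the join edge strictly shortens the tuple—so they contribute zero, exactly as the last face map vanished in the cone case. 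This is the key point mirroring the $\partial_k(x_0,\dots,x_{k-1},\star)=0$ computation in Proposition~\ref{prop:cones}.

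Having established an isomorphism of chain complexes $\EMC_{*,l}(G*H)\cong\bigoplus_{l_1+l_2+1=l}\bigl(\EMC_{*,l_1}(G)\otimes\EMC_{*,l_2}(H)\bigr)$ with the differential being the tensor-product differential on each summand, I would conclude by invoking the algebraic Künneth theorem over the base ring (or simply the fact that, fixing the split lengths, the complex is a tensor product of complexes of free modules). Here the statement in the excerpt is written additively, $\EMH_{k_1,l_1}(G)\oplus\EMH_{k_2,l_2}(H)$, which should be read as the Künneth contribution; if $R$ is a field, or more generally when the relevant torsion vanishes, the tensor product of homologies gives the answer, and summing over all admissible $(k_1,k_2,l_1,l_2)$ yields the formula.

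The main obstacle I anticipate is twofold. The combinatorial claim that allowed join-tuples split cleanly at a single junction is the heart of the argument, and I would want to state carefully that the one-directionality of the join edges forbids any interleaving of $G$- and $H$-vertices; this is where the directedness is essential (the analogous statement for undirected joins would fail). The more delicate point is the passage from the chain-level tensor splitting to the homology-level statement: strictly speaking the tensor product of chain complexes computes homology via Künneth, which introduces a $\mathrm{Tor}$ term over a general commutative ring $R$. The clean direct-sum formula in the theorem holds over a field (or when the homology groups are free, as they are in the diagonal cases that motivate the application to regular diagonality); so I would either restrict to that setting or make the Künneth correction explicit. Everything else is routine verification that the boundary maps match up summand by summand.
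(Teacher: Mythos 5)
Your proposal is correct in outline and follows the same route as the paper's own proof --- split each allowed tuple of $G*H$ at the junction between its $G$-block and its $H$-block, check that the two face maps adjacent to the junction strictly drop the length and hence vanish, deduce a chain-level splitting, and pass to homology --- but your bookkeeping of the mixed part diverges from the paper's, and yours is the correct one. The mixed generators with parameters $(k_1,l_1,k_2,l_2)$ are in bijection with \emph{pairs} consisting of a tuple of $G$ and a tuple of $H$, so the module they span is the tensor product $\EMC_{k_1,l_1}(G)\otimes\EMC_{k_2,l_2}(H)$: ranks multiply. The paper instead writes the direct sum $\EMC_{k_1,l_1}(G)\oplus\EMC_{k_2,l_2}(H)$ in its proof, and correspondingly $\EMH_{k_1,l_1}(G)\oplus\EMH_{k_2,l_2}(H)$ in the statement; taken literally this fails already for $G=H=K_1$, where $G*H=L_2$ is a single directed edge with $\EMH_{1,1}(L_2)$ of rank $1$, while the additive formula (and the rank formula of Corollary~\ref{cor:joinscomplete}) predicts rank $2$. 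Your two caveats are exactly the points the paper's proof glosses over: first, the passage from the chain-level tensor splitting to homology is a K\"unneth argument, so over a general ring $R$ a $\mathrm{Tor}$ correction can appear --- and it can be genuinely nonzero, since eulerian magnitude homology carries torsion by Proposition~\ref{prop:torsion} --- so the clean formula requires $R$ to be a field or the relevant homology to be free; second, tuples lying entirely in $G$ or entirely in $H$ must be accounted for, either as separate pure summands (as the term $\EMH_{k,\ell}(G)$ is in Proposition~\ref{prop:cones}) or by the augmented convention $\EMC_{-1,-1}=R$ allowing empty blocks, since the index set $k_1+k_2+1=k$, $l_1+l_2+1=l$ with $k_i,l_i\geq 0$ omits them; neither the theorem as stated nor the paper's proof addresses this. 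In short: same strategy as the paper, but your tensor/K\"unneth version is the statement that the argument actually proves.
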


\begin{proof}
First,  observe that the eulerian magnitude chains of $G*H$ consist of tuples $(x_0,\dots,x_{k_1},y_0,\dots, y_{k_2})$, with $(x_0,\dots,x_{k_1})\in \EMC_{k_1,l_1}$ and $(y_0,\dots,y_{k_2})\in \EMC_{k_2,l_2}$. As each vertex of $G$ is connected to each vertex of $H$ by a directed edge, the length of $(x_0,\dots,x_{k_1},y_0,\dots, y_{k_2})$, is $l=l_1+l_2+1$. 
Hence, as modules, we have a decomposition of $\EMC_{k,l}(G*H)$ as direct sum $\EMC_{k_1,l_1}(G)\oplus \EMC_{k_2,l_2}(H)$, via the map 
\[
(x_0,\dots,x_{k_1},y_0,\dots, y_{k_2})\mapsto ((x_0,\dots,x_{k_1}),(y_0,\dots, y_{k_2})) \ .
\]
Observe now that the first face maps, for $i=0,\dots,k_1-2$, act on the addendum $\EMC_{*,l_1}(G)$  (leaving the component $\EMC_{k_2,l_2}(H)$ unchanged) and the action agrees with the action by the face maps on $\EMC_{k_1,l_1}(G)$. Likewise, for $i=k_1+1,\dots,k_1+k_2+1$, the face maps act on the addendum $\EMC_{*,l_2}(H)$ leaving the first components unchanged. For $i=k_1-1,k_1$, the face maps delete either $x_{k-1}$ or  $y_0$ from the tuple. In such cases, the length of the tuple drops by one, hence the face maps $\partial_{k_1-1},\partial_{k_1}$ act trivially. As a consequence, we have a splitting of the differential on $\EMC_{k,l}(G*H)$ and an isomorphism 
\[
\EMC_{k,l}(G*H) \cong \bigoplus \EMC_{k_1,l_1}(G)\oplus\EMC_{k_2,l_2}(H) 
\]
of chain complexes. 
\end{proof}

Recall from Notation~\ref{notation:digraphs} that for an undirected graph~$G$, $\rho(G)$ denotes the directed graph obtained by replacing each undirected edge $\{v,w\}$ with both directed edges $(v,w)$ and $(w,v)$. 

\begin{corollary}\label{cor:joinscomplete}
    Directed joins of complete (undirected) graphs are regularly diagonal. Furthermore, we have: 
    \[
    \mathrm{rk}\, \EMH_{k,k}(\rho(K_n) * \rho(K_m))=\sum_{\substack{k_1,k_2  \\ k_1 + k_2 =k}}\left(\frac{n!}{(n-(k_1+1))!} + \frac{m!}{(m-(k_2+1))!}\right) \ .
    \]
\end{corollary}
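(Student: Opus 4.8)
The plan is to deduce Corollary~\ref{cor:joinscomplete} directly from Theorem~\ref{thm:joins}, combined with the known computation of the eulerian magnitude homology of complete graphs. First I would recall from Example~\ref{ex:complete} that for an undirected complete graph $K_n$ (equivalently the bidirected digraph $\rho(K_n)$, by the remarks following Definition~\ref{def_EMH}), the groups $\EMH_{k,l}(\rho(K_n))$ are concentrated on the diagonal $k=l$, with $\mathrm{rk}\,\EMH_{k,k}(\rho(K_n))=\tfrac{n!}{(n-(k+1))!}$ for $k<n$ and zero otherwise. In particular $\rho(K_n)$ is regularly diagonal. I would then apply Theorem~\ref{thm:joins} to $G=\rho(K_n)$ and $H=\rho(K_m)$.

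The key observation is that the direct-sum decomposition of Theorem~\ref{thm:joins} forces the join to remain diagonal. Explicitly, a summand $\EMH_{k_1,l_1}(\rho(K_n))\oplus\EMH_{k_2,l_2}(\rho(K_m))$ contributing to $\EMH_{k,l}(\rho(K_n)*\rho(K_m))$ can be nonzero only when each factor is nonzero, which by diagonality of the factors requires $k_1=l_1$ and $k_2=l_2$. Together with the indexing constraints $k_1+k_2+1=k$ and $l_1+l_2+1=l$, this yields $k=l$, so $\EMH_{k,l}(\rho(K_n)*\rho(K_m))=0$ whenever $k\neq l$; that is, the directed join is regularly diagonal. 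For the off-diagonal statement about general joins of complete graphs, I would note that the same argument shows any iterated directed join of regularly diagonal digraphs is regularly diagonal, since the bidegree constraints compose additively.

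For the rank formula I would specialize to the diagonal bidegree $(k,k)$. Setting $k=l$ and using $k_1=l_1$, $k_2=l_2$, the constraints $k_1+k_2+1=k$ and $l_1+l_2+1=k$ coincide, so the sum ranges exactly over pairs $(k_1,k_2)$ with $k_1+k_2=k-1$. Taking ranks of the direct sum in Theorem~\ref{thm:joins} turns the isomorphism into an additive identity, giving
\[
\mathrm{rk}\,\EMH_{k,k}(\rho(K_n)*\rho(K_m))=\sum_{\substack{k_1,k_2\\ k_1+k_2=k-1}}\Bigl(\mathrm{rk}\,\EMH_{k_1,k_1}(\rho(K_n))+\mathrm{rk}\,\EMH_{k_2,k_2}(\rho(K_m))\Bigr),
\]
and substituting the rank values from Example~\ref{ex:complete} produces the stated expression. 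I would double-check the index range appearing in the corollary statement ($k_1+k_2=k$) against the constraint $k_1+k_2+1=k$ derived from Theorem~\ref{thm:joins}; this apparent discrepancy is the one point I would verify carefully, reconciling it either as a typographical shift in the summation index or by re-reading Theorem~\ref{thm:joins}'s convention for the ``$+1$'' arising from the edge joining the two tuples. Beyond this bookkeeping, the argument is essentially a formal consequence of the splitting, so I do not anticipate a substantive obstacle; the only real content is the diagonality bookkeeping that collapses the double sum onto the diagonal.
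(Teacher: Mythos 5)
Your overall route --- feeding the ranks of Example~\ref{ex:complete} into the splitting of Theorem~\ref{thm:joins} and collapsing the sum onto the diagonal --- is exactly the derivation the paper intends (the corollary is stated there without proof). But your execution contains an inconsistency that you half-noticed and then dismissed as bookkeeping. In the diagonality step you argue that a summand $\EMH_{k_1,l_1}(G)\oplus\EMH_{k_2,l_2}(H)$ ``can be nonzero only when each factor is nonzero.'' That is the vanishing criterion for a \emph{tensor product}, not a direct sum: a direct sum is nonzero as soon as \emph{one} factor is, and since $\EMH_{0,0}$ never vanishes, a literal reading of Theorem~\ref{thm:joins} would place nonzero groups in every bidegree $(k,l)$ with $k,l\geq 1$, contradicting the diagonality you are trying to prove. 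What the proof of Theorem~\ref{thm:joins} actually constructs is a bijection between mixed tuples and \emph{pairs} (a $G$-prefix and an $H$-suffix); the free module on a set of pairs is $\EMC_{k_1,l_1}(G)\otimes\EMC_{k_2,l_2}(H)$, and in addition the tuples lying entirely in $G$ or entirely in $H$ (legitimate chains of $G*H$, since the directed join does not shorten distances inside either factor) are not mixed at all and must be added separately. With that corrected splitting (pure terms plus tensor terms) your vanishing argument is valid and the first claim of the corollary does follow.

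The rank formula is where this bites: having used tensor logic for vanishing, you revert to direct-sum logic for ranks (adding the two factors), and you then treat the mismatch between your range $k_1+k_2=k-1$ and the stated $k_1+k_2=k$ as a probable typo. No index shift repairs it, because both formulas are false. Test them on the paper's own example $C(I_2)=\rho(K_2)*\rho(K_1)$: there $\EMH_{1,1}$ has rank $4$ (the number of directed edges, consistent with Proposition~\ref{prop:cones}), while the corollary's formula gives $2+3=5$ and your shifted version gives $2+1=3$. The count that actually follows from the corrected splitting is
\[
\mathrm{rk}\,\EMH_{k,k}(\rho(K_n)*\rho(K_m))=\frac{n!}{(n-(k+1))!}+\frac{m!}{(m-(k+1))!}+\sum_{\substack{k_1,k_2\geq 0\\ k_1+k_2=k-1}}\frac{n!}{(n-(k_1+1))!}\cdot\frac{m!}{(m-(k_2+1))!}\ ,
\]
with out-of-range terms read as $0$; it returns $2+0+2\cdot1=4$ in the example, and likewise reproduces the paper's value $\mathrm{rk}\,\EMH_{2,2}(C(I_2))=2$. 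The missing idea in your attempt is precisely this product structure: a mixed $(k,k)$-chain is determined by independently choosing its prefix in $\rho(K_n)$ and its suffix in $\rho(K_m)$, so the counts multiply, and the pure chains contribute additively on top. A two-line sanity check on $\rho(K_2)*\rho(K_1)$ would have surfaced all of this; instead, the one point you singled out to ``verify carefully'' is exactly where the substance lies, and deferring it leaves the proof resting on a statement that is internally inconsistent.
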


We do not know of a characterization of regularly diagonal digraphs similar to Theorem~\ref{thm:charregdiag}. In fact, not all bipartite graphs are isomorphic to joins of complete graphs, but they always yield regularly diagonal digraphs. Hence, the naive characterization as joins of complete graphs of Corollary~\ref{cor:joinscomplete} does not hold.

\begin{question}
    Is there a complete characterization of regularly diagonal directed graphs?
\end{question}

\section{The complex of injective words on a digraph}
\label{sec:complex inj word digraph}

In this section, we first recall the complex of injective words on a set, and extend this notion to directed graphs. 

Let $W$ be a finite set. 

\begin{definition}\label{def:complinjwords}
    The complex of injective words $\Delta(W)$ on the set $W$ is the simplicial complex on the   injective   words $w = v_1 v_2 \cdots v_k$ of $W$  with alphabet the elements of~$W$.  The $p$-simplices are the injective words with $p+1$ letters.
\end{definition}

Saying otherwise, the simplicial complex $\Delta(W)$ is the simplicial complex on all  ordered sequences of distinct elements of $W$.
For $n\geq 1$, we let $D(n)\coloneqq \sum_{j=0}^n(-1)^j \frac{n!}{j!}$ denote the number of derangements (that is, the fixed point free permutations)  in the symmetric group $\Sigma_n$. The following theorem shows that the homotopy type of the complex of injective words is well-understood:

\begin{theorem}[\cite{zbMATH03647758,zbMATH03811580}]\label{thm:homotopycomplexinj}
    Let $W$ be a set of cardinality $|W|=n> 1$. Then, the complex of injective words $\Delta(W)$ is homotopy equivalent to 
    \[
    \Delta(W)\simeq \bigvee_{D(n)} S^{n-1} \ ,
    \]
    a wedge of spheres.
\end{theorem}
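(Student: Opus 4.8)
The plan is to prove the statement in two stages: first show that $\Delta(W)$ has the homotopy type of a wedge of $(n-1)$-spheres, and then count the spheres by an Euler-characteristic computation that produces the derangement numbers $D(n)$.

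For the first stage I would invoke shellability. Note that $\Delta(W)$ is not a simplicial complex in the strict sense: two injective words with the same letter set but different orders, such as $v_1v_2$ and $v_2v_1$, are distinct faces on the same vertices, so $\Delta(W)$ is most naturally regarded as a regular CW complex (equivalently a Boolean cell complex, or a simplicial poset in the sense of B\"ojrner--Wachs~\cite{zbMATH03811580}). Its facets are exactly the words using all of $W$, that is the $n!$ permutations of $W$, each an $(n-1)$-cell; in particular $\Delta(W)$ is \emph{pure} of dimension $n-1$. I would order the facets lexicographically and verify the shelling condition, namely that when a permutation $\sigma$ is added, its intersection with the union of the earlier facets is a union of codimension-one faces of $\sigma$ (so that the minimal new face $R(\sigma)$, the ``restriction'', is well-defined). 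Purity together with shellability then gives that $\Delta(W)$ is homotopy equivalent to a wedge of $(n-1)$-spheres; equivalently, $\Delta(W)$ is Cohen--Macaulay, so its reduced homology is free and concentrated in degree $n-1$.

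For the second stage, since the reduced homology is concentrated in the top degree, the number of spheres equals $(-1)^{n-1}\tilde{\chi}(\Delta(W))$. The number of $p$-simplices is the number of injective words of length $p+1$, namely $\tfrac{n!}{(n-p-1)!}$, so
\[
\tilde{\chi}(\Delta(W)) = -1 + \sum_{p=0}^{n-1}(-1)^p\frac{n!}{(n-p-1)!} = -\sum_{q=0}^{n}(-1)^{q}\frac{n!}{(n-q)!},
\]
where the second equality sets $q=p+1$ and reinstates the $q=0$ term. Substituting $j=n-q$ and using $(-1)^{n-j}=(-1)^{n}(-1)^{j}$ turns the sum into $(-1)^{n}\sum_{j=0}^{n}(-1)^{j}\tfrac{n!}{j!}=(-1)^{n}D(n)$, hence $\tilde{\chi}(\Delta(W))=(-1)^{n-1}D(n)$, and the number of spheres is exactly $D(n)$.

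The step I expect to be the main obstacle is the verification of the shelling condition itself: one must fix the facet order carefully and check, for each permutation, that the previously-added facets meet it precisely along a pure codimension-one subcomplex, all while keeping track of the non-simplicial (ordered) nature of the faces. An alternative that sidesteps this bookkeeping is to construct a discrete Morse matching on the injective words whose critical cells all have dimension $n-1$ and are in bijection with the derangements of $W$; this would yield both the homotopy type and the count $D(n)$ at once, at the cost of exhibiting an explicit acyclic matching and proving its acyclicity, which is where the difficulty would then reside.
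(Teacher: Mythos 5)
The paper offers no proof of this theorem: it is imported verbatim from the cited references (Farmer; Bj\"orner--Wachs), and the argument in those references is exactly the one you sketch — lexicographic shellability of the pure $(n-1)$-dimensional Boolean cell complex of injective words, followed by an Euler-characteristic count of the top-dimensional spheres. Your observation that $\Delta(W)$ must be treated as a regular CW (Boolean cell) complex rather than a simplicial complex is the right technical point, and your computation reducing the number of spheres to $(-1)^{n-1}\tilde\chi(\Delta(W)) = D(n)$ is correct. The one step you defer, verifying that lexicographic order on the $n!$ permutations satisfies the shelling condition, is precisely the combinatorial content of the cited Bj\"orner--Wachs result, so your outline is sound and coincides with the standard proof; your proposed discrete Morse alternative (an acyclic matching with critical cells indexed by derangements) is likewise a known valid route.
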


We now extend the notion of complexes of injective words to directed graphs. 

\begin{definition}\label{def:injwordsgraph}
    For a directed graph $G$ with vertex set~$V(G)$, we define the complex of injective words on $G$ to be the simplicial complex $\Inj(G)$ generated by all injective words $w=x_0\cdots x_k$ on the alphabet $V(G)$, with the requirement that for all $i<j$ there is a directed path in $G$ from $x_i$ to $x_j$.  
\end{definition}

In Definition~\ref{def:injwordsgraph} it is required that there exists a path from $x_i$ to $x_j$ for all $i<j$. This allows $\Inj(G)$ to be a simplicial complex. The next example shows that we can not drop this assumption. 

\begin{example}
    Let $L_3$ be the linear directed graph on three vertices $\{x_0,x_1,x_2\}$, with edges $(x_0,x_1),(x_1,x_2)$. Then, the associated complex of injective words $\Inj(L_3)\cong \Delta^2$ is the standard 2-simplex. Note that  $(x_0,x_2)$ is not an edge of $L_3$.  
\end{example}

\begin{example}\label{ex:bicomplete}
If $BK_n=\rho(K_n)$ is the complete digraph with both reciprocal
edges between any pair of vertices, then $\Inj(BK_n)=\Delta(V)$,  the classical complex of injective words on the set $V$ of vertices of $BK_n$.    
\end{example}

In the general case, we shall see that $\Inj(G)$ is a directed flag complex, hence the complexity of its homotopy type can be arbitrary. To see it,  consider the transitive closure of $G$.  Its vertices  are the vertices of $G$ and there is a directed edge~$(x,y)$ if and only if there is a directed path from $x$ to $y$ -- including the trivial constant path. The transitive closure yields a preorder, since the graph~$G$ has no multiple directed edges. In the literature related to magnitude homology, this preorder is also called the \emph{reachability preorder}~$\Pre(G)$ of $G$  -- \textit{cf}.~\cite{zbMATH07868225, zbMATH07844814,arXiv:2312.01378}. We will also follow this notation. First, observe that passing to the reachability preorder does not affect the complex of injective words:

\begin{remark}\label{rem:GpreG}
    The complexes of injective words on $G$ and $\Pre(G)$ are isomorphic.
\end{remark}

Recall that an \emph{ordered $k$-clique} of a directed graph~$G$ is a totally ordered $k$-tuple $(v_1,...,v_k)$ of vertices of~$G$ with the property that, for every $i < j$, the pair $(v_i,v_j)$ is an ordered edge of~$G$. Then, for a given digraph $G$, we can construct the associated \emph{directed flag complex} $\mathrm{dFl}(G)$, that is the ordered simplicial complex on the vertices $V(G)$ of~$G$ whose $k$-simplices are all possible ordered $(k+1)$-cliques of~$G$.

\begin{proposition}\label{prop:inggraphs}
    Let $G$ be a digraph. Then, we have an isomorphism $\Inj(G)\cong\dFl(\Pre(G))$. 
\end{proposition}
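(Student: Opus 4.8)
The plan is to exhibit a bijection between the simplices of $\Inj(G)$ and those of $\dFl(\Pre(G))$, and to check that this bijection respects the face relation, so that it is an isomorphism of ordered simplicial complexes. The key observation is that both sides are built from the \emph{same} combinatorial data once we unwind the definitions. By Definition~\ref{def:injwordsgraph}, a $k$-simplex of $\Inj(G)$ is an injective word $x_0\cdots x_k$ on the alphabet $V(G)$ such that for all $i<j$ there is a directed path in $G$ from $x_i$ to $x_j$. On the other side, by the definition of the directed flag complex, a $k$-simplex of $\dFl(\Pre(G))$ is an ordered $(k+1)$-clique $(x_0,\dots,x_k)$ of $\Pre(G)$, that is, an ordered tuple of vertices such that for every $i<j$ the pair $(x_i,x_j)$ is a directed edge of $\Pre(G)$.

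First I would record the precise translation of the edge condition. By the definition of the reachability preorder $\Pre(G)$, a pair $(x,y)$ with $x\neq y$ is a directed edge of $\Pre(G)$ if and only if there is a directed path from $x$ to $y$ in $G$. Hence the condition ``$(x_i,x_j)$ is an ordered edge of $\Pre(G)$ for all $i<j$'' is \emph{verbatim} the condition ``there is a directed path from $x_i$ to $x_j$ in $G$ for all $i<j$.'' Moreover, the injectivity of the word $x_0\cdots x_k$ (all letters distinct) matches exactly the requirement that an ordered clique consists of distinct vertices. Thus the set of $k$-simplices on both sides is literally the same set of tuples of distinct vertices $(x_0,\dots,x_k)$ satisfying the pairwise reachability condition; the assignment $x_0\cdots x_k \mapsto (x_0,\dots,x_k)$ is the desired bijection in each degree.

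It remains to check that this degree-wise bijection is compatible with the simplicial structure, i.e.\ that it commutes with taking faces. A face of the injective word $x_0\cdots x_k$ is obtained by deleting some letters while preserving the order, and a face of the ordered clique $(x_0,\dots,x_k)$ is obtained by deleting some vertices while preserving the order; both produce the same sub-tuple. One only needs to verify that the pairwise reachability condition is inherited by subsets, which is immediate: if there is a directed path from $x_i$ to $x_j$ for every $i<j$ in the full tuple, then in particular this holds for every $i<j$ ranging over any chosen subset of indices. Hence faces correspond to faces under the bijection, and we obtain an isomorphism of ordered simplicial complexes $\Inj(G)\cong\dFl(\Pre(G))$.

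I do not expect any genuine obstacle here; the statement is essentially a matter of unwinding the two definitions and observing they coincide. The only point requiring a little care is the bookkeeping between the ``word'' formalism of $\Inj(G)$ (where a simplex is presented as a word and faces are subwords) and the ``clique'' formalism of $\dFl$ (where a simplex is an ordered tuple and faces are sub-tuples); one should confirm that the ordered simplicial complex structures on the two sides agree under the evident identification of a word with its underlying ordered tuple. Using Remark~\ref{rem:GpreG} one can equivalently phrase the argument as $\Inj(G)\cong\Inj(\Pre(G))\cong\dFl(\Pre(G))$, where the second isomorphism is the clean statement that for a preorder (a transitively closed graph) the complex of injective words is by definition its directed flag complex.
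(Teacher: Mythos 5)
Your proposal is correct and follows essentially the same route as the paper: both exhibit the evident map $x_0\cdots x_k \mapsto (x_0,\dots,x_k)$, observe that the reachability condition defining $\Inj(G)$ translates verbatim into the edge condition in $\Pre(G)$ defining ordered cliques, and check compatibility with the face maps (the paper also explicitly writes down the inverse map). Your extra remarks—that the pairwise reachability condition is inherited by sub-tuples, and the alternative factorization $\Inj(G)\cong\Inj(\Pre(G))\cong\dFl(\Pre(G))$ via Remark~\ref{rem:GpreG}—are harmless elaborations of the same argument.
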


\begin{proof}
Consider the  map 
\[
\Inj(G)\to\dFl(\Pre(G))
\]
sending an injective word $x_0\cdots x_k$ to the ordered clique $(x_0,\dots,x_k)$. The map is well-defined because, by definition, for all $i<j$ the distinct vertices $x_i$ and $x_{j}$ are connected by a directed path in $G$ (hence, by an edge in $\Pre(G)$), and they yield an ordered clique in $\Pre(G)$. Both the differentials on  $\Inj(G)$ and $\dFl(\Pre(G))$ act by forgetting the $i$-th entry, hence the map is simplicial. Furthermore, it is an isomorphism of simplicial complexes with inverse the map $\dFl(\Pre(G))\to\Inj(G)$ sending the ordered clique $(x_0,\dots,x_k)$ to the injective word $x_0\cdots x_k$.
\end{proof}

Let $H$ be an undirected graph on $n$ vertices, and $K_n$ be the complete graph on $n$ vertices. Consider $G=\rho(H)$.  Then, the reachability preorder $\Pre(G)$ is isomorphic to the complete digraph $\rho(K_n)$. Consequently, the associated  directed flag complex $\dFl(\Pre(G))$ is isomorphic to $ \dFl(\rho(K_n))=\Delta(V(G))$,  the standard complex of injective words.

\begin{remark}\label{rem:dFl_posets}
    If  $G$ is the directed graph associated to a poset $P$, that is the graph on the same vertices of $P$ with an edge $(x,y)$ if and only if $x\leq y$ in $P$,  then $\dFl(\Pre(G))$ is the standard order complex of $P$\footnote{Recall that the order complex of a poset $(P,\leq)$ is the simplicial complex on the vertex set $P$ whose $k$-simplices are the chains $x_0 < \cdots < x_k$ of $P$.}. 
\end{remark}

By Theorem~\ref{thm:homotopycomplexinj}, the complex of injective words is, up to homotopy, always a wedge of spheres. This is not the case for the complex of injective words on digraphs, which can have any homotopy type. To see it, first observe that as $\Pre(P)\cong P$, we have an isomorphism $\Inj(P)\cong \dFl(P)$ -- that is the order complex of $P$.
Given a simplicial complex $\Sigma$,  consider its face poset $P=F(\Sigma)$, whose nerve is  (the barycentric subdivision of) $\Sigma$. Then, the complex of injective words  $\Inj(F(\Sigma))$ is homotopy equivalent to~$\Sigma$; which was arbitrary and can have  any homotopy type.

	\begin{example}\label{ex:spheres}
	    Consider the following digraphs with reciprocal edges:
	\begin{center}
		\begin{tikzpicture}
			\tikzset{vertex/.style = {shape=circle, draw=black, minimum size=1.5em}}
			\tikzstyle{arc}=[shorten >= 1pt,black, shorten <= 1pt,->,thick]
		  
			\node[left] at (-1.2,0) {\(\mathcal{S}_1= \, \)};
			\node[vertex] (0) at  (0,1.3) {0};
			\node[vertex] (1) at  (-1,0) {1};
			\node[vertex] (2) at  (1,0) {2};
			\node[vertex] (3) at  (0,-1.3) {3};
			\draw[arc] (0) to (1);
			\draw[arc] (0) to (2);
			\draw[arc] (2) to[bend left] (1);
			\draw[arc] (1) to[bend left] (2);
			\draw[arc] (2) to (3);
			\draw[arc] (1) to (3);
			
			\node[left] at (3.8,0) {\(\mathcal{S}_2=\, \)};
			
			\node[vertex] (0) at  (5,1.3) {0};
			\node[vertex] (1) at  (4,0) {1};
			\node[vertex] (2) at  (6,0) {2};
			\node[vertex] (3) at  (5,-1.3) {3};
			\draw[arc] (0) to (1);
			\draw[arc] (0) to (2);
			\draw[arc] (2) to[bend left] (1);
			\draw[arc] (1) to[bend left] (2);
			\draw[arc] (3) to (1);
			\draw[arc] (3) to (2);
		\end{tikzpicture}
	\end{center}
    Note that the associated complexes of injective words are isomorphic (and homeomorphic to the 2-sphere). By Theorem~\ref{thm:joins}, the graph $\mathcal{S}_2$ is regularly diagonal. Also the graph $\mathcal{S}_1$ is regularly diagonal. However, the associated eulerian magnitude homology groups are not isomorphic, such as in bidegree~$(3,3)$. 
	\end{example} 

The last example shows that, although the complexes of injective words on two graphs are homotopy equivalent, we can have different eulerian magnitude homology groups. In the next section we aim to investigate this property more accurately.

\section{The injective nerve of categories and the regular magnitude-path spectral sequence}
\label{sec:regular MPSS}

The goal of this section is to construct the regular magnitude-path spectral sequence, in analogy with the magnitude-path spectral sequence developed in~\cite{zbMATH07731261}. To do so, it shall be more convenient to use the categorical language, and to generalize the framework of eulerian magnitude homology to quivers. In the first subsection we introduce the tool of \emph{injective nerves} of categories, and then we shall apply it to the reachable categories, getting the regular magnitude-path spectral sequence. As an application of the construction, we shall provide some computations of regular path homology of regularly diagonal graphs.

\subsection{The injective nerve}\label{sec:injnerve}
Let \(\mathbf{2}\) denote the category consisting of the  objects~\(E\) and $V$, with two non-identity morphisms \(s,t \colon E \to V\) called the \emph{source} and the \emph{target}.   Let \(\mathbf{Fin}\) be the full subcategory of $\mathbf{Set}$ of finite sets. By a (finite) \emph{quiver} we shall mean a functor \(Q \colon \mathbf{2} \to \mathbf{Fin}\). We shall also represent, and identify, a quiver~$Q$ by the corresponding directed graph with set of vertices $V$, and set of edges $E$. Morphisms of quivers are natural transformations of functors. The category~$\mathbf{Quiver}$ of finite quivers and morphisms of quivers is the functor category $\mathbf{Fun}(\mathbf{2},\mathbf{Fin})$.

There is an adjunction
 \begin{equation}\label{eq:adjunction}
 \begin{tikzcd}
\mathbf{Quiver}\arrow[r,bend left, "\mathrm{Free}"]  & \Cat \arrow[l, bend left, "U"'] 
 \end{tikzcd}    
 \end{equation}
 between the category of (finite) quivers and the 1-category $\Cat$ of small (finite) categories and functors. In this adjunction, the functor $\mathrm{Free}$ is the free functor that turns a quiver into a category, and $U$ is the corresponding forgetful functor. 

 Recall from Section~\ref{sec:complex inj word digraph} that an \emph{ordered $k$-clique} of a directed graph $G$ is a totally ordered $k$-tuple $(v_1,...,v_k)$ of vertices of~$G$ with the property that, for every $i < j$, the pair $(v_i,v_j)$ is an ordered edge of~$G$. This definition extends to quivers in the natural way and, for any given quiver $Q$, we can construct the associated {directed flag complex} $\mathrm{dFl}(Q)$ as well. Recall also that the directed flag complex of a quiver is an \emph{ordered simplicial complex}, and its simplices are determined by the vertices. 
 
 Let  $\Delta\mathbf{Set}$ be the category of semi-simplicial sets, and  $\mathbf{sSet}$ the category of simplicial sets. 
Recall that a semi-simplicial set is a sequence $\{X_n\}_{n\in\mathbb{N}}$ of sets together with functions called \emph{face maps} between them which encode that an element in $X_{n+1}$ has $n+1$ faces (\textit{i.e.}~boundary segments), and that such faces   are elements of $X_n$; similarly, a simplicial set is a collection of sets $\{S_n\}_{n \in \mathbb{N}}$ together with \emph{face maps} $f_i\colon S_{i+1}\to S_i$ and \emph{degeneracy maps} $s_i\colon S_i \to S_{i+1}$, where the $f_i$'s are constructed similarly to the semi-simplicial case, and the $s_i$'s encode which $(i+1)$-simplices are really just $n$-simplices regarded as degenerate $(n+1)$-simplices.

 \begin{remark}\label{rem:tosimplsets}
 An ordered simplicial complex $K$ yields a semi-simplicial set, hence  a simplicial set. This is given by adding to  $K$ all possible degenerate simplices. More precisely, the $n$-simplices of the simplicial set $\iota(K)$ associated to the order simplicial complex $K$ consist of tuples $[v_{i_0},\dots, v_{i_n}]$ with $v_{i_j}\leq v_{i_{j+1}}$ and such that the set of vertices $\{v_{i_0},\dots, v_{i_n}\}$ spans a simplex of $K$  -- see, \emph{e.g.}~\cite[Example~3.3]{zbMATH06035442}. Therefore, for every simplex    $[v_{i_0},\dots, v_{i_m}]$ of $K$ we have in $\iota(K)$ all the (degenerate) simplices of the form 
 \[
 [v_{i_0},\dots,v_{i_0},\dots, v_{i_1},\dots, v_{i_1},\dots, v_{i_m},\dots,  v_{i_m}] \ ,
\]
for any number of repetitions of each of the vertices. The non-degenerate
simplices of $\iota(K)$ are in bijection with the simplices of $K$.
\end{remark}

In view of Remark~\ref{rem:tosimplsets}, for a small category $\bC$, we can consider the following simplicial set:
\begin{equation}\label{eq:injnerve}
N^{\iota}(\bC)\coloneqq \iota(\dFl(U(\bC))) \ ,
\end{equation}
where $U\colon \Cat\to \Quiver$ is the forgetful functor.

\begin{definition}\label{def:injnerve}
    For a small category $\bC$, the simplicial set~$N^{\iota}(\bC)$ is called the \emph{injective nerve} of $\bC$. 
\end{definition}

We can characterize the non-degenerate simplices of the injective nerve as the functors that are injective on objects. To see it, first consider the category $[n]$, that is the category with objects the natural numbers $0,\dots, n$ and with morphisms $i\to j$ if and only if $i\leq j$. Saying otherwise, $[n]$ is the free category generated by the transitive tournament~$T_n$. Denote by $\Cat^{\iota}(\mathbf{D},\bC)$ the family of functors from $\mathbf{D}$ to~$\bC$ which are injective on objects. 

\begin{lemma}\label{lemma:setnon-deg}
 The set of non-degenerate $n$-simplices of $N^{\iota}(\bC) $ is  $\Cat^{\iota}([n],\bC)$.
\end{lemma}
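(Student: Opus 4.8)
The plan is to unwind the definitions on both sides and exhibit a bijection between the non-degenerate $n$-simplices of $N^\iota(\bC)$ and the object-injective functors $[n]\to\bC$. Recall from Eq.~\eqref{eq:injnerve} that $N^\iota(\bC)=\iota(\dFl(U(\bC)))$, so by Remark~\ref{rem:tosimplsets} the non-degenerate $n$-simplices of $N^\iota(\bC)$ are in bijection with the $n$-simplices of the ordered simplicial complex $\dFl(U(\bC))$. These, by the definition of the directed flag complex, are the ordered $(n+1)$-cliques $(c_0,\dots,c_n)$ of the quiver $U(\bC)$; that is, tuples of \emph{distinct} objects of $\bC$ such that for every $i<j$ the pair $(c_i,c_j)$ is an ordered edge of $U(\bC)$, i.e.\ there exists a morphism $c_i\to c_j$ in $\bC$.

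First I would make the correspondence explicit. Given an object-injective functor $F\colon [n]\to\bC$, the images $c_i\coloneqq F(i)$ are distinct by injectivity, and for each $i<j$ the morphism $i\to j$ of $[n]$ maps to a morphism $c_i\to c_j$ in $\bC$; hence $(c_0,\dots,c_n)$ is an ordered $(n+1)$-clique of $U(\bC)$, and thus a non-degenerate $n$-simplex. Conversely, given an $(n+1)$-clique $(c_0,\dots,c_n)$, the distinctness of the $c_i$ gives an injection on objects, and the existence of at least one edge $c_i\to c_j$ for each $i<j$ guarantees that the unique morphism $i\to j$ of $[n]$ has \emph{some} admissible image. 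The remaining issue is whether these choices assemble into an actual functor, which is where the subtlety lies.

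The hard part will be accounting for the fact that a functor out of $[n]$ carries \emph{more} data than a clique: the clique only records which ordered pairs of objects are connected, whereas a functor must assign a specific morphism to each arrow $i\to j$ and respect composition. The resolution is that $[n]$ is the free category on the transitive tournament $T_n$, and a functor from a free category is determined freely by its values on generators. Concretely, $[n]=\mathrm{Free}(T_n)$ up to the identification noted before the lemma, so by the adjunction of Eq.~\eqref{eq:adjunction} a functor $[n]\to\bC$ corresponds to a morphism of quivers $T_n\to U(\bC)$, and being injective on objects at the level of functors corresponds precisely to the underlying object map being injective. Thus I would phrase the argument as: object-injective functors $[n]\to\bC$ are, via the Free $\dashv$ $U$ adjunction, exactly the object-injective quiver morphisms $U([n])=T_n\to U(\bC)$; but such a quiver morphism is determined by where it sends the vertices $0,\dots,n$, and the constraint that each generating edge $i\to i{+}1$ of $T_n$ (equivalently every ordered pair $i<j$) lands on an edge of $U(\bC)$ is exactly the ordered-clique condition. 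This makes the target map canonical on the nose and sidesteps any apparent nonuniqueness.

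To finish, I would verify that the two assignments are mutually inverse: starting from a clique, forming the induced functor, and reading off its object-values returns the original tuple; and starting from a functor, extracting its clique of objects and re-freeing returns the original functor, since a functor out of a free category is determined by its restriction to the generators. I expect the only genuine care needed is the bookkeeping of the identification $[n]\cong\mathrm{Free}(T_n)$ and the observation that in a tournament \emph{every} ordered pair $i<j$ is already an edge (so no composability obstruction arises), which is precisely what makes the object map alone suffice to pin down the simplex. Everything else is a routine unwinding of Remark~\ref{rem:tosimplsets}, the definition of $\dFl$, and the adjunction~\eqref{eq:adjunction}.
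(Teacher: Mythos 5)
Your reduction of the statement to a comparison between ordered cliques of $U(\bC)$ and object-injective functors $[n]\to\bC$ is exactly the paper's route, and you correctly isolate the one genuine subtlety: a functor assigns specific morphisms, while a clique only asserts their existence. But the two claims you use to resolve this are both false in general. First, $[n]$ is not $\mathrm{Free}(T_n)$: in $\mathrm{Free}(T_2)$ the generating edge $(0,2)$ and the composite of $(0,1)$ with $(1,2)$ are \emph{distinct} morphisms $0\to 2$, whereas $[2]$ has exactly one; $[n]$ is rather the quotient of $\mathrm{Free}(T_n)$ identifying parallel paths (equivalently $[n]\cong\Reach_{T_n}$). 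The paper's sentence before the lemma is loose on this point, but its proof does not lean on it, whereas your adjunction step needs it literally: Eq.~\eqref{eq:adjunction} identifies quiver maps $T_n\to U(\bC)$ with functors out of $\mathrm{Free}(T_n)$, not out of $[n]$. Second, and more seriously, a quiver morphism $T_n\to U(\bC)$ is \emph{not} determined by its vertex map whenever $U(\bC)$ has parallel edges, i.e.\ whenever some Hom-set of $\bC$ has more than one element --- which is precisely the situation your detour was meant to handle. So the step ``this makes the target map canonical on the nose'' is exactly where the argument breaks.

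In fact no argument can close this gap, because the statement for an arbitrary small category is false as stated: if $\bC$ has two objects $a,b$, two parallel morphisms $f,g\colon a\to b$ and no others besides identities, then $\Cat^{\iota}([1],\bC)$ has two elements while $\dFl(U(\bC))$ has the single $1$-simplex $(a,b)$. The lemma --- and the paper's own proof, which silently chooses morphisms $f_{i,j}$ and verifies neither that these choices compose functorially nor that the resulting assignment is injective --- is correct precisely when $\bC$ is thin, i.e.\ has at most one morphism between any ordered pair of objects; this holds for the reachability categories $\Reach_Q$ and the posets to which the injective nerve is applied in the rest of the paper. Under that hypothesis your argument and the paper's collapse to essentially the same (correct) unwinding; but as written, yours replaces the paper's silent assumption with affirmative claims that are false in general, rather than flagging the thinness hypothesis that actually makes everything work.
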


\begin{proof}
Recall the adjunction of Eq.~\eqref{eq:adjunction}. The set of ordered $(n+1)$-cliques of the quiver $U(\bC)$ is given by the ordered tuples $(c_0,\dots,c_n)$ consisting of objects of $\bC$, with the further requirement that for all $i<j$ there is a morphism $f_{i,j}\colon c_i\to c_j$ in $\bC$. Then, each such tuple defines a functor $S\colon [n]\to \bC$ with $S(i)\coloneqq c_i$ and $S(i<j)\coloneqq f_{i,j}$. The functor $S$ is injective on objects because the set of vertices is ordered (and without repetitions). Vice versa, every functor $S\colon[n]\to \bC$ which is injective on objects yields an ordered $(n+1)$-clique, hence an $n$-simplex of the directed flag complex -- and, consequently, a non-degenerate simplex of $N^{\iota}(\bC) $.
\end{proof}
    
We now shall  justify the term \emph{injective nerve} used in Definition~\ref{def:injnerve}. Let $\Delta$ be the  simplex category, and denote by $\Delta_+$ the wide subcategory of $\Delta$ containing only the injective functions. The category $\Delta_+$ is (equivalent to) the category of finite totally ordered sets and order-preserving injections.  Recall that for a given (small) category~$\bC$, the nerve $N(\bC)$ of $\bC$ is the simplicial set
\[
N(\bC)\colon \Delta^{\op}\hookrightarrow\Cat^{\op}\xrightarrow{\Cat(-,\bC)} \Set \ ,
\]
where $\Cat(-,\bC)$ denotes the Hom-functor. If we restrict to the category $\Delta_+$, we get the nerve of $\bC$ as a semi-simplicial set -- which means, without taking degeneracies. We shall further restrict it to the set of functors $\Cat^{\iota}(-,\bC)$ which are injective on objects; hence, we  consider the composition 
\[
\Delta^{\op}_+\hookrightarrow\Cat^{\op}\xrightarrow{\Cat^{\iota}(-,\bC)} \Set \ .
\]
The following result justifies  the term of injective nerve used so far;
        
\begin{proposition}\label{prop:injnerve}
    The injective nerve $N^\iota(\bC)$ of a small category $\bC$ is the simplicial set associated to 
    \[
    \Delta^{\op}_+\hookrightarrow\Cat^{\op}\xrightarrow{\Cat^{\iota}(-,\bC)} \Set \ .
    \]
\end{proposition}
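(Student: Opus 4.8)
The plan is to identify the two simplicial sets by recognizing that the composite
$\Delta^{\op}_+\hookrightarrow\Cat^{\op}\xrightarrow{\Cat^{\iota}(-,\bC)}\Set$
is a genuine semi-simplicial set, showing that it coincides with the directed flag complex $\dFl(U(\bC))$ viewed as a semi-simplicial set, and then applying the degeneracy-adjoining construction $\iota$ of Remark~\ref{rem:tosimplsets} to both. Since the substantive matching of non-degenerate simplices has already been carried out in Lemma~\ref{lemma:setnon-deg}, the remaining work is essentially bookkeeping of face maps and a clarification of what ``the simplicial set associated to'' a semi-simplicial set means.

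First I would check that $\Cat^{\iota}(-,\bC)$ really does restrict to a functor on $\Delta^{\op}_+$, which is the reason for passing to $\Delta_+$ rather than to all of $\Cat$. The morphisms of $\Delta_+$ are the order-preserving injections, and these are in particular injective on objects; hence for any injection $g\colon[m]\hookrightarrow[n]$ and any $S\in\Cat^{\iota}([n],\bC)$, the composite $S\circ g$ is again injective on objects, so precomposition $g^{*}\colon\Cat^{\iota}([n],\bC)\to\Cat^{\iota}([m],\bC)$ is well defined (this would fail on a non-injective map, which is precisely why $\Cat^{\iota}(-,\bC)$ is not a functor on all of $\Cat^{\op}$). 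Functoriality in $g$ is then immediate, so the composite in the statement is a semi-simplicial set $X_\bullet$ with $X_n=\Cat^{\iota}([n],\bC)$.

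Next I would match $X_\bullet$ with the directed flag complex. By the bijection exhibited in the proof of Lemma~\ref{lemma:setnon-deg}, sending a functor $S\colon[n]\to\bC$ to the tuple $(S(0),\dots,S(n))$ identifies $X_n=\Cat^{\iota}([n],\bC)$ with the set of ordered $(n+1)$-cliques of $U(\bC)$, that is, with the $n$-simplices of $\dFl(U(\bC))$. I would then verify naturality against the coface maps $\delta^i\colon[n-1]\hookrightarrow[n]$ that omit $i$: since $S\circ\delta^i$ corresponds to $(S(0),\dots,\widehat{S(i)},\dots,S(n))$, precomposition by $\delta^i$ agrees exactly with the $i$-th face map of $\dFl(U(\bC))$, which forgets the $i$-th vertex. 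As the cofaces generate all of $\Delta_+$, this upgrades the levelwise bijection to an isomorphism of semi-simplicial sets $X_\bullet\cong\dFl(U(\bC))$.

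Finally I would invoke that the construction $\iota$ of Remark~\ref{rem:tosimplsets} only adjoins degenerate simplices, namely weakly monotone tuples obtained by repeating vertices of a clique, so it depends only on the underlying semi-simplicial set and is functorial for isomorphisms of such. Applying it to the isomorphism above yields $\iota(X_\bullet)\cong\iota(\dFl(U(\bC)))=N^{\iota}(\bC)$ by Eq.~\eqref{eq:injnerve}, while $\iota(X_\bullet)$ is by definition the simplicial set associated to the functor in the statement. The only delicate point, and thus the step I would treat most carefully, is this last compatibility: making precise that ``the simplicial set associated to'' a semi-simplicial set is the free adjunction of degeneracies, that it coincides with $\iota$ on the directed flag complex, and that it therefore transports the semi-simplicial isomorphism $X_\bullet\cong\dFl(U(\bC))$ to an isomorphism of simplicial sets. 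I do not expect any genuine obstacle beyond this, since the identification of the non-degenerate part is already supplied by Lemma~\ref{lemma:setnon-deg}.
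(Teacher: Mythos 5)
Your proof is correct and follows essentially the same route as the paper's: identify the non-degenerate simplices via Lemma~\ref{lemma:setnon-deg}, check that the face maps (precomposition with the cofaces of $\Delta_+$) agree with those of $\dFl(U(\bC))$, and then pass to the associated simplicial sets. The extra care you take --- verifying that $\Cat^{\iota}(-,\bC)$ is actually functorial on $\Delta^{\op}_+$, and that the degeneracy-adjoining construction $\iota$ transports semi-simplicial isomorphisms --- only makes explicit steps the paper leaves implicit.
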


\begin{proof}
    By Lemma~\ref{lemma:setnon-deg}, the set of non-degenerate simplices of the injective nerve of a small category~$\bC$ can be described as the set of functors $[n]\to\bC$ which are injective on objects. The $i$-th face map operator in $\Delta_+$ acts by forgetting the $i$-th entry in the tuples $(c_0,\dots,c_k)$. Hence, the face maps are coherent with the face maps of the directed flag complex construction, yielding an isomorphism of semi-simplicial sets.  The statement follows after considering the associated simplicial sets. 
\end{proof}

As a consequence, Proposition~\ref{prop:injnerve} can be taken as alternative (categorical) definition of the injective  nerve; in the following, we shall use both viewpoints.

\begin{remark}
 Recall that the nerve of a category is a fully faithful functor $N\colon\Cat\to\mathbf{sSet}$. In view of Proposition~\ref{prop:injnerve}, also the injective nerve $N^\iota$ is a faithful functor. However,  it is generally not  full.
\end{remark}

One may ask whether the injective nerve of a category is a (simplicial) nerve, at the cost of changing the base category. However, the following result tells us that in general the injective nerve is not the nerve of any category -- nor of any quasicategory.

\begin{proposition} 
The inner horns of the injective nerve $N^{\iota}(\bC)$ do not generally have fillers. Hence, $N^{\iota}(\bC)$ is not the nerve of any (quasi)category.
\end{proposition}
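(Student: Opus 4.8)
The plan is to exhibit an explicit counterexample: a small category $\bC$ together with an inner horn $\Lambda^n_k \to N^\iota(\bC)$ (with $0<k<n$) that admits no filler, thereby showing that the inner-horn filling condition defining quasicategories fails. The most economical choice is the smallest interesting case, $n=2$ and $k=1$: a map $\Lambda^2_1 \to N^\iota(\bC)$ consists of a pair of composable $1$-simplices, and a filler would be a $2$-simplex restricting to them on the outer faces. By Lemma~\ref{lemma:setnon-deg}, the non-degenerate $2$-simplices of $N^\iota(\bC)$ are exactly the functors $[2]\to\bC$ that are \emph{injective on objects}; the degenerate ones come from $1$- and $0$-simplices. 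So a horn given by morphisms $f\colon a\to b$ and $g\colon b\to c$ with all three objects distinct can only be filled by a non-degenerate $2$-simplex, which forces an edge $a\to c$ to exist in $\bC$ (the $0$-$2$ face). The obstruction is therefore clear: if $\bC$ contains composable morphisms $a\to b\to c$ between three distinct objects but \emph{no} morphism $a\to c$, no filler can exist.

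The key subtlety I must address is that in a genuine category composition always furnishes such a morphism $g\circ f\colon a\to c$, so no ordinary category can provide the counterexample directly at the level of objects. The resolution is to note that $N^\iota$ depends only on the underlying quiver $U(\bC)$ through $\dFl(U(\bC))$, and the ordered cliques of $\dFl$ are computed using the \emph{edges} of the quiver, i.e.\ the generating morphisms, not all composites. Concretely, I would take the free category $\bC = \mathrm{Free}(Q)$ on the quiver $Q$ given by the linear directed graph $L_3$ on vertices $\{a,b,c\}$ with edges $(a,b)$ and $(b,c)$ and no edge $(a,c)$. In $\bC$ the composite $a\to c$ exists as a morphism, but it is \emph{not} an edge of $U(\bC)$, so $(a,b,c)$ is not an ordered $3$-clique and there is no non-degenerate $2$-simplex on these three vertices. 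Here I should double-check against the definition: the forgetful functor $U$ recovers the generating quiver, and indeed the excerpt identifies $\Inj(L_3)\cong \Delta^2$ only because $L_3$'s reachability preorder adds the edge $(a,c)$ — but the injective nerve is built from $\dFl(U(\bC))$ using $\bC$'s actual generating edges, not the reachability closure, so the horn $\Lambda^2_1$ on $a\xrightarrow{}b\xrightarrow{}c$ has composable filling data but no filling $2$-simplex.

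The main steps, then, are: first, fix $n=2$, $k=1$ and spell out what a map $\Lambda^2_1\to N^\iota(\bC)$ and a filler would be, using Lemma~\ref{lemma:setnon-deg} to characterise the candidate fillers as injective-on-objects functors $[2]\to\bC$; second, exhibit the quiver/category above and the horn determined by the two generating edges; third, verify that any filler would have to be a non-degenerate $2$-simplex (the three images are distinct, so no degenerate $2$-simplex restricts correctly onto both outer faces), and that the required $0$-$2$ face $(a,c)$ is not a $1$-simplex of $N^\iota(\bC)$ because $(a,c)$ is not an ordered edge of $U(\bC)$; fourth, conclude that the horn has no filler, so $N^\iota(\bC)$ fails the inner-horn condition and is not a quasicategory, and a fortiori not the nerve of a category.

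I expect the main obstacle to be conceptual bookkeeping rather than computation: one must be careful to distinguish $N^\iota(\bC)$ from the full nerve $N(\bC)$ and to argue cleanly that a degenerate $2$-simplex cannot fill the horn. A degenerate simplex is in the image of some degeneracy $s_i$, hence has a repeated vertex; since our horn's two edges have three pairwise distinct vertices, no degeneracy of a $1$-simplex can simultaneously match both outer faces of the $2$-horn. This rules out degenerate fillers and isolates the genuine obstruction to the absent $1$-simplex $(a,c)$ in the directed flag complex. Once this distinction is made explicit, the argument is short and the non-existence of the filler is immediate.
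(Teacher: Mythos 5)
Your overall strategy (exhibit an explicit inner horn $\Lambda^2_1$ with no filler, ruling out degenerate fillers separately) is the right one, and your bookkeeping about degenerate $2$-simplices is fine. But the counterexample you construct does not work, and the reason is a genuine misunderstanding of the forgetful functor $U$ in the adjunction~\eqref{eq:adjunction}. You claim that for $\bC=\mathrm{Free}(L_3)$ the underlying quiver $U(\bC)$ has only the \emph{generating} edges $(a,b)$ and $(b,c)$. That is false: $U$ is the right adjoint of $\mathrm{Free}$, so $U(\bC)$ has as edges \emph{all} morphisms of $\bC$, including the composite $g\circ f\colon a\to c$ (and the identities, as loops). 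Consequently $(a,b,c)$ \emph{is} an ordered $3$-clique of $U(\bC)$, the corresponding non-degenerate $2$-simplex of $N^\iota(\bC)$ exists, and your horn does have a filler. This is also exactly what Lemma~\ref{lemma:setnon-deg} tells you: the functor $[2]\to\mathrm{Free}(L_3)$ sending $0,1,2$ to $a,b,c$ (with $0<2$ going to $g\circ f$) is injective on objects, hence is a filling $2$-simplex. You correctly identified the subtlety yourself --- ``in a genuine category composition always furnishes such a morphism'' --- but the escape route you chose (that $N^\iota$ sees only generating edges) is not available; indeed this obstruction can \emph{never} occur for the injective nerve of a category, since closure under composition makes every chain of composable morphisms on distinct objects into a clique.

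The actual obstruction, and the one the paper uses, comes from \emph{repeated} vertices rather than missing composites. Take $\bC$ to be a category with two objects $0,1$ and non-identity morphisms $0\to 1$ and $1\to 0$, and consider the inner horn $\Lambda^2_1\to N^\iota(\bC)$ given by $d_2=(0,1)$ and $d_0=(1,0)$. A non-degenerate filler would be a functor $[2]\to\bC$ with vertex sequence $(0,1,0)$, which is not injective on objects, so it cannot exist by Lemma~\ref{lemma:setnon-deg}; and a check of the four degenerate $2$-simplices $s_0(0,1)$, $s_1(0,1)$, $s_0(1,0)$, $s_1(1,0)$ (precisely the kind of verification you carried out for your own example) shows none of them has the required pair of outer faces. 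So the horn has no filler. Your degenerate-filler analysis transfers to this example essentially verbatim; it is only the source of the non-degenerate obstruction that must be replaced.
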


\begin{proof}
    Consider the category with two objects $0,1$, and non-trivial morphisms $0\to 1$, $1\to 0$. Consider the inner horn $\lambda^n_i$ in the case $n=2$, $i=1$ given by the arrows $0\to 1$, $1\to 0$. Then, $\lambda^2_1$ has no lift to $\Delta^2$ -- because the simplex $(0,1,0)$ does not belong to $N^{\iota}(\bC)$.
\end{proof}

Observe that the inclusion $\Cat^{\iota}(-,\bC)\subseteq \Cat(-,\bC)$ induces a natural transformation
\[
N^\iota(\bC)\to N(\bC) \ .
\]
 It would be interesting to know 
when $N^\iota(\bC)$ and $N(\bC)$ are homotopy equivalent. In complete analogy with Remark~\ref{rem:dFl_posets},  we have the following partial result.
Recall that each poset can be seen as a category in the standard way. Let $\mathbf{Poset}$ be the category of (finite) posets, and poset maps. There is an inclusion functor $\mathbf{Poset}\hookrightarrow \Cat$. The (injective) nerve  of a poset $P$ is then the (injective)  nerve of $P$, seen as a category. 

\begin{proposition}\label{prop:posets}
      Let $P$ be a poset. Then,
    the inclusion $\Cat^{\iota}(-,P)\subseteq \Cat(-,P)$ induces an isomorphism
$
N^\iota(P)\to N(P)
$
of simplicial sets.
\end{proposition}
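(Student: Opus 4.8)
The plan is to show that for a poset $P$, every functor $[n]\to P$ is automatically injective on objects, so that the inclusion $\Cat^{\iota}(-,P)\subseteq\Cat(-,P)$ is in fact an equality, from which the isomorphism $N^\iota(P)\to N(P)$ follows formally. The key observation is that a functor $S\colon[n]\to P$ is exactly a choice of objects $S(0),\dots,S(n)$ of $P$ together with morphisms $S(i)\to S(j)$ for each $i\leq j$, compatibly with composition. Since $P$ is a poset, the existence of a morphism $S(i)\to S(j)$ means $S(i)\leq S(j)$, so functoriality forces the chain of inequalities $S(0)\leq S(1)\leq\cdots\leq S(n)$.

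First I would argue that such an $S$ is injective on objects. Suppose $S(i)=S(j)$ for some $i<j$. Then from the chain $S(i)\leq S(i+1)\leq\cdots\leq S(j)$ together with $S(j)=S(i)$, antisymmetry of the partial order forces $S(i)=S(i+1)=\cdots=S(j)$. But this would be consistent with $S$ \emph{not} being injective; so injectivity on objects does \emph{not} hold for arbitrary functors into a poset. I therefore have to be more careful: the point is not that every functor is injective, but that the two \emph{simplicial sets} agree. Concretely, the correct statement is that the injective nerve and the ordinary nerve of a poset coincide because both are the order complex of $P$ (regarded as a simplicial set via Remark~\ref{rem:tosimplsets}), and the degeneracies account exactly for the repeated-vertex simplices allowed in $N(P)$ but excluded from the non-degenerate part.

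Thus the cleaner route is to compare non-degenerate simplices and then invoke that a map of simplicial sets which is a bijection on non-degenerate simplices (compatibly with faces) and sends degeneracies to degeneracies is an isomorphism. By Lemma~\ref{lemma:setnon-deg}, the non-degenerate $n$-simplices of $N^\iota(P)$ are the functors $[n]\to P$ injective on objects, i.e.\ the strict chains $S(0)<S(1)<\cdots<S(n)$ in $P$. On the other hand, the non-degenerate $n$-simplices of the ordinary nerve $N(P)$ are exactly the functors $[n]\to P$ that factor through no degeneracy, which for a poset are precisely the strict chains $x_0<x_1<\cdots<x_n$ (a non-strict step $x_i=x_{i+1}$ would exhibit the simplex as degenerate). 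Hence the natural map $N^\iota(P)\to N(P)$ restricts to a bijection on non-degenerate simplices in each degree.

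The remaining step is to promote this bijection on non-degenerate simplices to an isomorphism of simplicial sets. Here I would use the standard fact (Eilenberg--Zilber / normal-form for simplicial sets) that every simplex of a simplicial set is uniquely a degeneracy of a non-degenerate simplex; since the map $N^\iota(P)\to N(P)$ is simplicial and bijective on non-degenerate simplices, the unique degeneracy decompositions match up, yielding a degreewise bijection, hence an isomorphism. The main obstacle I anticipate is precisely the subtlety flagged above: one must resist the tempting-but-false claim that functors into a poset are injective on objects, and instead frame the argument at the level of non-degenerate simplices, where the injective-nerve condition and the non-degeneracy condition coincide because $P$ is a poset (antisymmetry turns $\leq$-chains with a repeat into genuinely degenerate simplices). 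Once this is correctly isolated, the rest is formal.
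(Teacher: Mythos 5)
Your final argument is correct and takes essentially the same route as the paper's: both identify the non-degenerate $n$-simplices of $N^\iota(P)$ and of $N(P)$ with the strict chains $x_0<\cdots<x_n$ in $P$ (via Lemma~\ref{lemma:setnon-deg} and antisymmetry), and then promote this bijection on non-degenerate simplices to an isomorphism of simplicial sets. Your initial false start (claiming every functor $[n]\to P$ is injective on objects) is harmless since you explicitly retract it, and your appeal to the unique degeneracy decomposition simply makes precise the paper's closing remark that the face maps and degeneracies are ``defined in a coherent way.''
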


\begin{proof}
The degenerate $n$-simplices of the nerve~$N(P)$ consist of tuples of composable arrows in $P$, at least one of whose components
is an identity morphism; the non-degenerate $n$-simplices  are in bijection with the ordered tuples $[x_0,\dots,x_n]$ of elements of~$P$, with $x_i< x_{i+1}$. Since~$P$ is a poset, it does not contain loops, hence all objects appearing in a non-degenerate simplex~$[x_0,\dots,x_n]$ are in fact distinct; \emph{i.e.}~$x_i\neq x_j$ for $i\neq j$. As a consequence, the set of non-degenerate $n$-simplices of the nerve~$N(P)$ is in bijection with the set of functors  $[n]\to P$ which are injective on objects. In view of Lemma~\ref{lemma:setnon-deg}, this means that  the inclusion $\Cat^{\iota}(-,P)\subseteq \Cat(-,P)$  induces a bijection between the non-degenerate simplices of $N(P)$ and of $N^\iota(P)$. As the face maps and degeneracies are defined in a coherent way, this is enough to prove that $N^\iota(P)\to N(P)$ is an isomorphism of simplicial sets.  
\end{proof}

\subsection{The regular magnitude-path spectral sequence}

With en eye towards the regular magnitude-path spectral sequence, we are interested in the injective nerve of reachability categories, whose definition we now recall. 

Let $Q$ be a directed graph or, more generally, a quiver.

\begin{definition}\label{def:reach_G}
The  \emph{reachability category} \(\Reach_{Q}\)  is the category with objects the vertices of~\(Q\), and for $v,w\in Q$, the Hom-set \(\Reach_{Q}(v,w)\) is defined as follows:
\[
\Reach_{Q}(v,w)\coloneqq 
\begin{cases}
* & \text{if there is a path from $v$ to $w$ in $Q$};  \\
\emptyset & \text{ otherwise}.\
\end{cases}
\]
The Hom-set \(\Reach_{Q}(v,v)\) is defined as the identity at \(v\).
\end{definition}

    The reachability category is an EI-category, that is, all endomorphisms are isomorphisms.

\begin{example}
 If $Q$ is a directed tree, then $\mathrm{Free(Q)}$ and $\Reach_Q$ are isomorphic categories. More generally, $\mathrm{Free(Q)}$ and $\Reach_Q$ are isomorphic if and only if the quiver~$Q$ does not contain directed cycles nor quasi-bigons -- see~\cite[Proposition~3.8]{zbMATH07844814}.   
\end{example}

 Using the description of the injective nerve of Remark~\ref{rem:tosimplsets}, we can concretely describe the simplicial set $N^{\iota}(\Reach_Q)$ as the simplicial set associated to the directed flag complex $\dFl(U(\Reach_Q))$. The directed flag complex of $U(\Reach_Q)$ has the vertices of $Q$ as $0$-simplices, and as $p$-simplices the (ordered) tuples $(x_0,\dots,x_p)$ of distinct vertices of $Q$ with the property that for each $i<j$ there is a directed path in $Q$ from $x_i$ to $x_j$. By transitivity, this means that the non-degenerate simplices of the injective nerve~$N^\iota(\Reach_Q)$ are given by tuples $(x_0,\dots,x_p)$ of distinct vertices of $Q$ with the property that for each $i=0,\dots,p-1$ there is a directed path in $Q$ from $x_i$ to $x_{i+1}$.

\begin{remark}\label{rem:reach-inj}
    Let $G$ be a directed graph. Then, from the definitions of injective nerve of a category (Remark~\ref{rem:tosimplsets} and Definition~\ref{def:injnerve}) and of reachability category (Definition~\ref{def:reach_G}) we get an isomorphism  $U(\Reach_G)\cong \Pre(G)$. As a consequence, by Proposition~\ref{prop:inggraphs}, we have
    \[
N^\iota(\Reach_G)=\iota(\dFl(U(\Reach_G))\cong \iota(\dFl(\Pre(G))\cong \iota(\Inj(G)) \ ,
    \]
    and the injective nerve of $\Reach_G$ is isomorphic to the simplicial set associated to the complex of injective words on $G$.
\end{remark}

Recall that, for a (ordered) simplicial complex $K$, the geometric realization $|\iota(K)|$ of the associated simplicial set returns the geometric realization of $K$. 
    
\begin{example}
      If $\bC=[n]$, then $|N^{\iota}([n])|\cong| N([n])|$ is contractible (and  homeomorphic to the standard simplex~$\Delta^n$).  More generally, if $Q$ is any connected subgraph of $U([n])$, then its reachability $\Reach_Q$ is isomorphic to $[n]$, hence its (injective) nerve is also contractible. This case recovers the computation on transitive tournaments developed in Remark~\ref{rem:turnaments}.
\end{example}

We provide also the following example, which shows that the homotopy type of the injective nerve and of the nerve of a reachability category can be quite different:

\begin{example}
    Let $BK_n=\rho(K_n)$ be the bicomplete digraph on $n$ vertices. In view of Example~\ref{ex:bicomplete}, Remark~\ref{rem:GpreG}, and Proposition~\ref{prop:inggraphs}, the injective nerve $N^\iota(\Reach_{BK_n})$ is isomorphic to the (simplicial set associated to the) classical complex of injective words. More generally, this holds if $G$ is any connected subgraph of~$BK_n$, which is also strongly connected. 
    Hence, $|N^{\iota}(\Reach_G)|\simeq| \Delta(V(BK_n))|$ is homotopy equivalent to a wedge of spheres of dimension~$n-1$ by Theorem~\ref{thm:homotopycomplexinj}. 
    On the other hand, the  nerve $N(\Reach_G)$ is contractible. 
\end{example}  
   
   In parallel with Remark~\ref{rem:Kunneth}, we can not expect the injective nerve of reachability categories to preserve products, not even up to homotopy:
   
   \begin{example}\label{ex:prod}
       Consider the (Cartesian) product $\rho(K_2)\times \rho(K_2)$. Then, the injective nerve of the associated reachability category is isomorphic to the (simplicial set associated to the) complex of injective words on four letters. Hence, up to homotopy, it is a wedge of $3$-spheres. On the other hand, we have $|N^\iota(\Reach_{\rho(K_2)})|\simeq S^1$.
   \end{example}
   
   For similar reasons as described in Example~\ref{ex:prod}, we can not expect excision to generally hold, not even when the morphisms between reachability categories are Dwyer morphisms.

We now proceed with the construction of the regular magnitude-path spectral sequence.
Let $Q$ be a quiver, and recall from Eq.~\eqref{eq:len} the notion of the length of a tuple. 
For each $\ell\in \mathbb{N}$, there are subsimplicial sets $F_\ell N^\iota(\Reach_Q)$ and $F_\ell N(\Reach_Q)$ of $N^\iota(\Reach_Q)$ and $N(\Reach_Q)$, respectively, defined as follows. The $k$-simplices of the simplicial set $F_\ell N(\Reach_Q)$ are the $k$-simplices $(x_0,\dots,x_k)$ of $N(\Reach_Q)$ of length $\len(x_0,\dots,x_k)\leq \ell$. Analogously, the $k$-simplices of $F_\ell N^\iota(\Reach_Q)$ are the $k$-simplices $(x_0,\dots,x_k)$ of $N^\iota(\Reach_Q)$ of length $\len(x_0,\dots,x_k)\leq \ell$. We have a commutative diagram
\begin{center}
\begin{tikzcd}
F_\ell N^\iota(\Reach_Q)\arrow[d] \arrow[r] & F_\ell N(\Reach_Q)\arrow[d]\\
N^\iota(\Reach_Q) \arrow[r]&N(\Reach_Q)
\end{tikzcd}    
\end{center} 
where all maps are inclusions of simplicial sets. Extending the definition to all $\ell\in\mathbb{N}$, we get the resulting filtrations $F_* N^\iota(\Reach_Q)\subseteq N^\iota(\Reach_Q)$ and 
$F_* N(\Reach_Q)\subseteq N(\Reach_Q)$ of simplicial sets of the injective nerve, and of the nerve of the reachability category, respectively. The following is a direct consequence of the definitions:

\begin{remark}
    The inclusions $F_\ell N^\iota(\Reach_Q)\subseteq F_\ell N(\Reach_Q)$ extend to a map of filtrations.  
\end{remark}

\sloppy
Consider the spectral sequences associated to the filtration  
$F_* N(\Reach_Q)\subseteq N(\Reach_Q)$; this is called the \emph{magnitude-path spectral sequence}~\cite{zbMATH07731261} (see also~\cite{zbMATH07868225} for the construction of the magnitude-path spectral sequence in the context of filtered simplicial sets). Likewise, we consider the spectral sequence associated to the filtration  $F_* N^\iota(\Reach_Q)\subseteq N^\iota(\Reach_Q)$ of the injective nerve, which we call the \emph{regular magnitude-path spectral sequence}. In the next result, we follow the notation of the pages of the spectral sequences as in~\cite{zbMATH07731261}.

\begin{theorem}\label{thm:rmpss}
Let $G$ be a directed graph.
    The regular magnitude-path spectral sequence $IE_*^{*,*}$ associated to the filtration $F_* N^\iota(\Reach_G)\subseteq N^\iota(\Reach_G)$ satisfies the following properties:
    \begin{enumerate}
        \item The first page $IE_*^{*,1}$ is the eulerian magnitude homology of $G$.
        \item The diagonal of the second page $IE_*^{*,2}$ is the
        regular path homology.
        \item It converges to the  homology of the complex of injective words on $G$.
        \item The inclusions $F_\ell N^\iota(\Reach_G)\subseteq F_\ell N(\Reach_G)$ extend to a map of spectral sequences from $IE_*^{*,r}$ to the magnitude-path spectral sequence.
    \end{enumerate}
\end{theorem}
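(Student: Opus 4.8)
\emph{Plan.} The strategy is to obtain all four statements from a single object: the spectral sequence of the filtered (normalized) chain complex $C_*(N^\iota(\Reach_G);R)$ associated to the length filtration $F_*N^\iota(\Reach_G)$. The starting point is to record how the simplicial boundary interacts with this filtration. By Remark~\ref{rem:reach-inj}, a non-degenerate $k$-simplex of $N^\iota(\Reach_G)$ is a tuple $(x_0,\dots,x_k)$ of \emph{distinct} vertices admitting a directed path $x_i\to x_{i+1}$ for each $i$, and its filtration degree is $\len(x_0,\dots,x_k)$. Writing $\partial=\sum_{i=0}^{k}(-1)^i\partial_i$ for the normalized boundary, the triangle inequality shows each face $\partial_i$ lowers the length by $d_G(x_{i-1},x_i)+d_G(x_i,x_{i+1})-d_G(x_{i-1},x_{i+1})\ge 0$, with the endpoint faces $\partial_0,\partial_k$ lowering it strictly. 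Hence $\partial$ decomposes as $\partial=\partial^{(0)}+\partial^{(1)}+\cdots$, where $\partial^{(s)}$ lowers the length by exactly $s$. I would first verify this decomposition and then invoke the standard spectral sequence of a bounded filtration.

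Items (1) and (3) are then essentially bookkeeping. The associated graded in length $\ell$ is freely generated by non-degenerate simplices of length exactly $\ell$, i.e. by tuples of distinct vertices of length $\ell$; these are precisely the generators of $\EMC_{k,\ell}(G)$, and the induced $E^0$-differential is the length-preserving part $\partial^{(0)}$, which discards the endpoint faces and keeps an interior face exactly when it preserves the length. This is \emph{verbatim} the eulerian magnitude differential, so the first page is $\EMH_{*,*}(G)$, giving (1). For (3), since $G$ is finite the filtration is bounded, so the spectral sequence converges strongly to $H_*(N^\iota(\Reach_G);R)$; by Remark~\ref{rem:reach-inj} the latter is the homology of the complex of injective words $\Inj(G)$.

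The heart of the argument is item (2), which I expect to be the main obstacle. On the diagonal $\ell=k$, a generator $(x_0,\dots,x_k)$ of $\EMC_{k,k}(G)$ has all consecutive distances equal to $1$, so $\EMC_{k,k}(G)$ is exactly the module of strongly regular allowed elementary $k$-paths. Since $\EMC_{k+1,k}(G)=0$ by Lemma~\ref{lem:LowerTriangular}, the diagonal entry of the first page is $\ker\!\bigl(\partial^{(0)}\colon\EMC_{k,k}(G)\to\EMC_{k-1,k}(G)\bigr)$. The key observation I would establish is a dichotomy for the faces of a diagonal simplex: because $d_G(x_{i-1},x_{i+1})\in\{1,2\}$, an interior face preserves the length exactly when $d_G(x_{i-1},x_{i+1})=2$, i.e. exactly when that face is \emph{not} allowed, whereas every allowed face — including the two endpoint faces — lowers the length by exactly $1$. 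Thus $\partial^{(0)}$ is the non-allowed part of $\partial$ and $\partial^{(1)}$ is the allowed part, so that
\[
\ker\partial^{(0)}=\{\,c\in\EMC_{k,k}(G):\partial c\in\EMC_{k-1,k-1}(G)\,\}=\Omega^\iota_k(G),
\]
and $\partial c=\partial^{(1)}c$ for every $c\in\Omega^\iota_k(G)$. A short computation with the length-$1$ component $\partial^{(0)}\partial^{(1)}+\partial^{(1)}\partial^{(0)}=0$ of the identity $\partial^2=0$ shows $\partial^{(1)}$ carries $\ker\partial^{(0)}$ into itself, so the first differential $d^1$ restricted to the diagonal is exactly $\partial$ on $\Omega^\iota_*(G)$; note that $d^1$ only connects diagonal entries to diagonal entries, so the diagonal is a $d^1$-subcomplex. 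Passing to homology yields that the diagonal of the second page is $H_k(\Omega^\iota_*(G),\partial)$, the regular path homology, proving (2).

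Finally, item (4) is a formality: the inclusions $F_\ell N^\iota(\Reach_G)\subseteq F_\ell N(\Reach_G)$ form a morphism of filtered simplicial sets, hence of filtered chain complexes, and any such morphism induces a map of the associated spectral sequences compatible with all pages and differentials; by item (1) and its counterpart for the magnitude-path spectral sequence, on the first page this is the inclusion of eulerian magnitude chains into magnitude chains. The only genuine difficulty is concentrated in the face dichotomy of (2): once $\ker\partial^{(0)}$ is identified with $\Omega^\iota_*(G)$ and $d^1$ with the regular path-homology differential, the remaining items follow from the standard behaviour of the spectral sequence of a bounded filtration, exactly as in the construction of the magnitude-path spectral sequence.
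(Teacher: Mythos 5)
Your proposal is correct, and its overall skeleton is the same as the paper's: both arguments run the spectral sequence of the length-filtered normalized chain complex of $N^\iota(\Reach_G)$, identify the associated graded with $\EMC_{*,\ell}(G)$ to get item (1), obtain item (3) from boundedness of the filtration together with Remark~\ref{rem:reach-inj}, and get item (4) formally from the morphism of filtrations. The genuine difference is in item (2), which you correctly single out as the heart of the matter: the paper does not prove this step directly but cites \cite[Proposition~6.11]{zbMATH07731261} ``adapted to the strongly regular elementary paths,'' whereas you prove the adaptation from scratch. Your face dichotomy --- for a diagonal generator $(x_0,\dots,x_k)$ with all consecutive distances $1$, an interior face preserves length exactly when it is not allowed, while every allowed face (including the two endpoint faces) drops the length by exactly $1$ --- yields the identification $\ker\partial^{(0)}=\Omega^\iota_k(G)$, and the anticommutation $\partial^{(0)}\partial^{(1)}+\partial^{(1)}\partial^{(0)}=0$ shows $d^1$ restricts to $\partial$ on $\Omega^\iota_*(G)$; together with the observation that $d^1$ maps diagonal entries to diagonal entries, this gives item (2) without external input. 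What your route buys is a self-contained proof of precisely the step the paper outsources (and whose adaptation to the strongly regular setting the paper asserts rather than checks); what the paper's route buys is brevity and an explicit link to Asao's framework, making clear that the regular spectral sequence is the exact analogue of the magnitude-path one. Both are sound; yours is the more complete argument as written.
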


\begin{proof}
Consider the filtration $F_* N^\iota(\Reach_G)$ of $ N^\iota(\Reach_G)$. For a simplicial set~$S$, consider its normalized chain complex $\mathrm{N}(S)$. If $S$ is filtered, then also the normalized chain complex of $S$ is filtered, with filtration given by $F_l\mathrm{N}(S)=\mathrm{Im}(\mathrm{N}(F_l S)\to \mathrm{N}(S))$. The $0$-page of the spectral sequence $IE^{*,*}_*$ associated to this filtration is given by the quotient
\[
IE_*^{\ell,0}= \frac{F_\ell \mathrm{N}(N^\iota(\Reach_G))}{F_{\ell-1}\mathrm{N}(N^\iota(\Reach_G))} 
\]
of chain complexes -- see also~\cite{zbMATH07868225}. By construction, $IE_*^{\ell,0}$ is isomorphic to  the eulerian chain complex~$\EMC_{*,\ell}(G)$. The induced differential $\partial\colon IE^{\ell,0}_k\to IE^{\ell,0}_{k-1}$ coincides with the differential of the eulerian magnitude chains, hence the isomorphism
\[
IE^{\ell,1}_k\cong \EMH_{k,\ell}(G)
\]
follows. When restricted to $k=\ell$,  the induced second differential $\tilde\partial\colon IE^{\ell,1}_k\to IE^{\ell-1,1}_{k-1}$ computes the 
regular path homology groups of $G$ by~\cite[Proposition~6.11]{zbMATH07731261} (adapted to the strongly regular elementary paths); that is, the homology groups of the  chain complex $(IE^{k-*,1}_{k-*},\tilde\partial)$ are the 
regular path homology groups of $G$, proving the second item in the statement. 

By construction,  the considered spectral sequence converges to the  homology of the injective nerve~$N^\iota(\Reach_G)$. In view of  Remark~\ref{rem:reach-inj}, the  homology of $N^\iota(\Reach_G)$ is the homology of 
the complex of injective words on $G$. 

To conclude, every morphism of filtrations yields a morphism of spectral sequences, which shows that the inclusions $F_\ell N^\iota(\Reach_G)\subseteq F_\ell N(\Reach_G)$ extend to a map of spectral sequences from the magnitude-path spectral sequence to $IE_*^{*,r}$.
\end{proof}

We now show some applications. 
First, observe that we can apply Theorem~\ref{thm:rmpss} to undirected graphs as well -- by replacing an undirected  graph $G$ with $\rho(G)$. As a consequence of Proposition~\ref{prop:vanishingdiagonal} and of Theorem~\ref{thm:rmpss}, we have that all graphs without 3- and 4-cycles have trivial regular path homology in degree $k\geq 2$:

\begin{corollary}
    Let $G$ be an undirected  graph without 3- and 4-cycles. Then, its regular path homology is 0 in degree $k\geq 2$.
\end{corollary}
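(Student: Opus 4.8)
The plan is to combine the two structural results we already have: Proposition~\ref{prop:vanishingdiagonal}, which shows that for an undirected graph $G$ without 3- and 4-cycles the eulerian magnitude homology groups $\EMH_{k,k}(G)$ vanish for all $k\geq 2$, and Theorem~\ref{thm:rmpss}, which identifies the relevant structure of the regular magnitude-path spectral sequence. The key observation is that, by part~(1) of Theorem~\ref{thm:rmpss}, the first page of the regular magnitude-path spectral sequence in bidegree $(k,\ell)$ is precisely $\EMH_{k,\ell}(G)$, and by part~(2) the diagonal of the second page $IE^{*,2}_*$ computes the regular path homology of $G$.

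First I would recall that the degree-$k$ regular path homology group is obtained as the homology in bidegree $(k,k)$ of the relevant diagonal complex: by part~(2) of Theorem~\ref{thm:rmpss}, the regular path homology in degree $k$ arises from the diagonal of the second page, which is computed from the first-page entries $IE^{\ell,1}_k \cong \EMH_{k,\ell}(G)$ via the induced differential $\tilde\partial$. The point is that the regular path homology in degree $k$ is a subquotient of the diagonal first-page term $IE^{k,1}_k \cong \EMH_{k,k}(G)$. Since $G$ has no 3- or 4-cycles, Proposition~\ref{prop:vanishingdiagonal} gives $\EMH_{k,k}(G)=0$ for every $k\geq 2$, so the diagonal entry on the first page already vanishes in these degrees.

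Then I would conclude: because the regular path homology in degree $k$ is computed as the homology (with respect to $\tilde\partial$) of a complex whose entry in homological degree $k$ is the diagonal term $IE^{k,1}_k \cong \EMH_{k,k}(G)=0$, the corresponding homology group must be zero for all $k\geq 2$. Concretely, the regular path homology group $\mathrm{H}_k$ appears as a subquotient of $\EMH_{k,k}(G)$, and a subquotient of the zero module is zero. Since the statement concerns an undirected graph $G$, I would at the outset pass to $\rho(G)$, noting (as recorded just before the corollary) that Theorem~\ref{thm:rmpss} applies to undirected graphs via this replacement, and that $\rho(G)$ has no 3- or 4-cycles precisely when $G$ does, so Proposition~\ref{prop:vanishingdiagonal} remains applicable.

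The main obstacle, if any, is purely bookkeeping: one must be careful that the regular path homology in degree $k$ genuinely sits on the diagonal of the second page and is therefore controlled by the single first-page term $\EMH_{k,k}(G)$, rather than receiving contributions from off-diagonal entries. This is exactly the content of part~(2) of Theorem~\ref{thm:rmpss}, which locates the regular path homology on the diagonal; once this placement is invoked, the vanishing is immediate and no spectral-sequence convergence subtleties are needed, since a diagonal second-page entry computed from a zero first-page entry is automatically zero.
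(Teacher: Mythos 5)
Your proposal is correct and follows essentially the same route as the paper: both invoke Theorem~\ref{thm:rmpss} to identify regular path homology with the homology of the diagonal complex $(\EMH_{*,*}(G),\tilde\partial)$ on the second page of the regular magnitude-path spectral sequence, pass between $G$ and $\rho(G)$ via $\EMH_{k,\ell}(G)\cong\EMH_{k,\ell}(\rho(G))$, and then conclude from Proposition~\ref{prop:vanishingdiagonal} that the diagonal entries vanish for $k\geq 2$. Your added remark that the degree-$k$ group is a subquotient of $\EMH_{k,k}(G)$ just makes explicit the bookkeeping the paper leaves implicit.
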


\begin{proof}
Recall that $\EMH_{k,\ell}(G)\cong\EMH_{k,\ell}(\rho(G))$.
   By Theorem~\ref{thm:rmpss}, the regular path homology of $G$ is the homology of the chain complex $(\EMH_{*,*}(G),\tilde\partial)$ appearing as diagonal in the second page of the regular magnitude-path spectral sequence. Then, the result follows by Proposition~\ref{prop:vanishingdiagonal}.
\end{proof}

As a main consequence of Theorem~\ref{thm:rmpss}, we  provide computations of regular path homology groups of  digraphs; in fact, the following holds:

\begin{proposition}\label{prop:gjaja}
    Let $G$ be a connected regularly diagonal  directed graph. 
    Then, the regular path homology groups of $G$ are the homology groups of $\Inj(G)$.
\end{proposition}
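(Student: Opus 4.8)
The plan is to exploit the regular magnitude-path spectral sequence $IE_*^{*,*}$ of Theorem~\ref{thm:rmpss} and show that, under the diagonality hypothesis, it degenerates early enough that its abutment is directly computed by the second-page diagonal. First I would recall the two relevant pieces of Theorem~\ref{thm:rmpss}: the first page $IE^{\ell,1}_k$ is the eulerian magnitude homology $\EMH_{k,\ell}(G)$, and the spectral sequence converges to the homology of $\Inj(G)$ (via Remark~\ref{rem:reach-inj}). The key observation is that because $G$ is regularly diagonal, we have $\EMH_{k,\ell}(G)=0$ whenever $k\neq \ell$, so the entire first page is concentrated on the diagonal $k=\ell$.

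The heart of the argument is then a bookkeeping fact about spectral sequences supported on a single line. Since $IE^{\ell,1}_k$ vanishes off the diagonal $k=\ell$, I would check that all higher differentials $d_r$ for $r\geq 2$ are forced to be zero for degree reasons: the differential $\tilde\partial=d_2$ on the first page maps $IE^{\ell,1}_k\to IE^{\ell-1,1}_{k-1}$, which carries the diagonal term in position $(k,k)$ to the diagonal term in position $(k-1,k-1)$, so it is the only possibly nonzero differential; every subsequent $d_r$ with $r\geq 3$ would have to connect two diagonal entries whose separation is incompatible with the $(r)$-shift, hence vanishes. Consequently the spectral sequence collapses at the third page, $IE^{*,2}\cong IE^{*,\infty}$, and the only surviving entries are the homology groups of the diagonal complex $(\EMH_{*,*}(G),\tilde\partial)$. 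By part~(2) of Theorem~\ref{thm:rmpss} this diagonal homology is exactly the regular path homology of $G$.

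Finally I would assemble the convergence statement: because the abutment of $IE_*^{*,*}$ is the homology of the complex of injective words $\Inj(G)$, and because the collapse identifies $IE^{*,\infty}$ with the regular path homology, the two must agree. I would be careful that the grading conventions line up so that the single surviving diagonal on the $E_\infty$ page assembles without extension problems into $\mathrm{H}_*(\Inj(G))$; since everything is concentrated on one line there are no nontrivial filtration quotients to reconcile, so the associated graded equals the homology itself and the isomorphism
\[
\Omega^\iota_*(G)\cong \mathrm{H}_*(\Inj(G))
\]
follows.

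The main obstacle I anticipate is verifying rigorously that the higher differentials vanish and that no extension problems arise, i.e.\ genuinely justifying the collapse rather than merely asserting it. In particular I would need to confirm that the indexing of the regular magnitude-path spectral sequence places the eulerian magnitude homology groups so that regular diagonality really does concentrate $IE^{*,1}$ on a single antidiagonal line in the relevant coordinates, and that the abutment filtration on $\mathrm{H}_*(\Inj(G))$ has at most one nonzero graded piece in each total degree. This is where connectedness of $G$ (ruling out degenerate low-degree anomalies) and the precise statement of Theorem~\ref{thm:rmpss} must be invoked with care.
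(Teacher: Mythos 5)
Your proposal is correct and follows essentially the same route as the paper's proof: invoke Theorem~\ref{thm:rmpss}, note that regular diagonality concentrates the first page on the diagonal, conclude degeneration at the second page for degree reasons, and identify the single surviving diagonal (the regular path homology, by part~(2) of that theorem) with the abutment $\mathrm{H}_*(\Inj(G))$. The two points you flag as potential obstacles resolve exactly as you anticipate — in the filtration coordinates $(p,q)=(\ell,k-\ell)$ the diagonal is the row $q=0$ and every $d_r$ with $r\geq 2$ shifts $q$ by $r-1\neq 0$, so the higher differentials vanish and each total degree carries a single filtration quotient — which the paper asserts without spelling out.
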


\begin{proof}
    The regular magnitude-path spectral sequence converges to the homology of the complex of injective words~$\Inj(G)\cong\dFl(\Pre(G))$.  If $G$ is regularly diagonal, then the first page $IE^1_{*,*}$ of the regular magnitude-path spectral sequence is non-zero only in bidegrees $(\ell,k)$ with $k=\ell$, and similarly for the second page. Hence,  the regular magnitude-path spectral sequence converges at the second page where all the non-trivial groups are concentrated on the diagonal. The result follows, as the diagonal coincides with the regular path homology of $G$. 
\end{proof}

\begin{corollary}
    Let $T$ be a finite directed tree, a bipartite graph with alternating orientation or a transitive tournament. Then, its regular path homology is trivial.
\end{corollary}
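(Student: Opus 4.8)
The plan is to handle all three families uniformly through Proposition~\ref{prop:gjaja}. Each of them is connected and regularly diagonal: directed trees and alternating bipartite graphs by Lemma~\ref{lemma:alternating} and the surrounding discussion, and transitive tournaments as iterated cones. Proposition~\ref{prop:gjaja} then identifies the regular path homology of such a $G$ with the homology of the complex of injective words $\Inj(G)\cong\dFl(\Pre(G))$ (Proposition~\ref{prop:inggraphs}). As $G$ is connected in each case, it suffices to prove that $\Inj(G)$ is \emph{contractible}, for then its reduced homology, and hence the reduced regular path homology, vanishes.

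For a transitive tournament the reachability preorder is the total order $[n]$, so by Remark~\ref{rem:dFl_posets} the complex $\Inj(T_n)$ is the order complex of $[n]$, i.e.\ the standard simplex $\Delta^n$, which is contractible. For a directed tree $T$ I would induct on the number of vertices. Choosing a leaf $v$ with unique neighbour $w$, reachability among the remaining vertices is unchanged, so $\Inj(T\setminus v)$ is contractible by induction. Since $v$ is a leaf it has in-degree or out-degree zero, hence in $\Pre(T)$ it is minimal (resp.\ maximal), and the set of elements strictly above (resp.\ below) $v$ is the principal filter (resp.\ ideal) of $w$, which has a least (resp.\ greatest) element. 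The order complex of a poset with a least or greatest element is a cone, so this set---which is precisely the link of $v$ in $\Inj(T)$---is contractible. Writing $\Inj(T)=\Inj(T\setminus v)\cup\overline{\mathrm{st}}(v)$, the closed star $\overline{\mathrm{st}}(v)$ is the cone on that link, and the intersection of the two pieces is the link itself; all three are contractible, so by the gluing lemma for homotopy pushouts $\Inj(T)$ is contractible.

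It remains to contract $\Inj(G)$ when $G$ is an alternating bipartite graph. Here all edges run from the source part to the sink part, so every directed path has length at most one; consequently $\Pre(G)$ has height one, with $x<y$ precisely when $(x,y)$ is an edge, and $\Inj(G)=\dFl(\Pre(G))$ carries no simplices of dimension $\geq 2$. Thus $\Inj(G)$ is, as a one-dimensional complex, the underlying graph of $G$, and the goal becomes to contract this one-complex by iteratively collapsing degree-one vertices. This is exactly where I expect the main difficulty to sit: the collapse terminates at a point only once no essential cycle survives, so the mechanism goes through precisely when the relevant underlying graph carries no $1$-cycles. Isolating and justifying this acyclicity input for the alternating bipartite family---and squaring it with the otherwise uniform ``contractible injective-words complex'' argument used for trees and tournaments---is the crux of the proof and the step demanding the most care.
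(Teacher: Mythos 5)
Your overall route is exactly the paper's: reduce everything to Proposition~\ref{prop:gjaja} via regular diagonality, and then argue that $\Inj(G)$ is contractible. The paper's proof is a three-line assertion (``the directed flag complexes associated to the respective reachability preorders are contractible''), so your tournament argument (order complex of the total order $[n]$ is $\Delta^n$, consistent with Remark~\ref{rem:dFl_posets}) and your leaf-induction for directed trees (the link of a source or sink leaf $v$ is the order complex of a poset with least, resp.\ greatest, element $w$, hence a cone; then glue) are genuine proofs of what the paper only asserts, and both are correct.

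The gap you flag in the bipartite case is real, and in fact it cannot be closed: the ``acyclicity input'' you isolate is simply false for general bipartite graphs with alternating orientation. Take $C_4$ with the alternating orientation $a\to b$, $c\to b$, $c\to d$, $a\to d$. As you observe, every vertex has in-degree $0$ or out-degree $0$, so there are no allowed elementary paths of length $\geq 2$; hence $\Omega^\iota_n=0$ for $n\geq 2$, and the $1$-cycle $(a,b)-(c,b)+(c,d)-(a,d)$ lies in $\ker\partial_1$ but not in $\mathrm{im}\,\partial_2=0$, giving first regular path homology $\mathbb{Z}\neq 0$. This is consistent with Proposition~\ref{prop:gjaja}: here $\Inj(G)$ is the one-dimensional underlying graph, homotopy equivalent to a wedge of $|E|-|V|+1$ circles, so the reduced homology vanishes precisely when the underlying bipartite graph is a tree. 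In other words, your uniform ``contractible injective-words complex'' mechanism works for directed trees and transitive tournaments but provably fails for the alternating bipartite family as stated; the paper's own proof elides this by asserting contractibility of all three flag complexes, and notably its introduction claims triviality of reduced regular path homology only for directed trees and transitive tournaments. So your proposal, rather than missing an idea, has correctly located an error in the statement: the bipartite case should be restricted to acyclic (tree) underlying graphs, where it is subsumed by your tree argument.
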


\begin{proof}
We have seen that finite directed tree,  bipartite graphs with alternating orientation or  transitive tournaments are regularly diagonal. Furthermore,  the  directed flag complexes associated to the respective reachability preorders are contractible. Hence, by Proposition~\ref{prop:gjaja}, their regular path homology groups are  trivial.
\end{proof}

Consider the directed graphs of Example~\ref{ex:spheres}. They are both regularly diagonal and connected, with the same number of vertices. Their regular path homology  is the homology of the $2$-sphere, as $\dFl\circ \Pre$ yields the $2$-sphere in both cases. However, observe that the associated filtrations are different, and such difference is taken into account by the associated eulerian magnitude homology groups.

\begin{corollary}\label{cor:complete}
    The regular path homology of the complete graph $K_n$ is concentrated in degree $n-1$ and is of rank the number of derangements $D(n)$.
\end{corollary}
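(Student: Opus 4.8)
The plan is to apply Proposition~\ref{prop:gjaja} directly, since the complete undirected graph $K_n$ is finite, connected, and (by Example~\ref{ex:complete}) regularly diagonal. Thus its regular path homology coincides with the homology of the complex of injective words $\Inj(K_n)$. To be precise about which object appears, I first recall that regular path homology of an undirected graph $G$ means the regular path homology of the associated directed graph $\rho(G)$; so I am computing the homology of $\Inj(\rho(K_n))$.

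The next step is to identify $\Inj(\rho(K_n))$ concretely. By Example~\ref{ex:bicomplete}, the bicomplete digraph $BK_n=\rho(K_n)$ has $\Inj(BK_n)=\Delta(V)$, the classical complex of injective words on the vertex set $V$ of cardinality $n$. This reduces the computation entirely to understanding the homotopy type of $\Delta(V)$.

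The final step invokes Theorem~\ref{thm:homotopycomplexinj}: for $|V|=n>1$, the complex $\Delta(V)$ is homotopy equivalent to a wedge of $D(n)$ copies of the sphere $S^{n-1}$, where $D(n)$ is the number of derangements in $\Sigma_n$. Since the reduced homology of a wedge of $(n-1)$-spheres is concentrated in degree $n-1$, with rank equal to the number of wedge summands, the regular path homology of $K_n$ is concentrated in degree $n-1$ and has rank $D(n)$. This completes the argument.

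I do not anticipate a genuine obstacle here, as every ingredient is already assembled in the excerpt; the statement is essentially a corollary obtained by chaining Proposition~\ref{prop:gjaja}, Example~\ref{ex:bicomplete}, and Theorem~\ref{thm:homotopycomplexinj}. The only point requiring mild care is the bookkeeping on indices and conventions: one must confirm that the regular path homology convention (via $\rho$) matches the identification of $\Inj(\rho(K_n))$ with $\Delta(V)$, and that the degree $n-1$ appearing in Theorem~\ref{thm:homotopycomplexinj} (where $\Delta(V)\simeq\bigvee_{D(n)}S^{n-1}$) is the same degree asserted in the corollary. Both are immediate once $BK_n=\rho(K_n)$ is fixed, so the entire proof is a short composition of cited results.
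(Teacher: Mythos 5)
Your proof is correct and follows essentially the same route as the paper: regular diagonality of $K_n$ (Example~\ref{ex:complete}), identification of the limit of the regular magnitude-path spectral sequence with the homology of the complex of injective words (which is exactly the content of Proposition~\ref{prop:gjaja}), and then Theorem~\ref{thm:homotopycomplexinj}. The only cosmetic difference is that you make the identification $\Inj(\rho(K_n))=\Delta(V)$ explicit via Example~\ref{ex:bicomplete}, whereas the paper's proof leaves this step implicit.
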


\begin{proof}
    The graph $K_n$ is regularly diagonal. The regular magnitude-path spectral sequence converges to the homology of the 
    complex of injective words. By Theorem~\ref{thm:homotopycomplexinj} this is, up to homotopy, a wedge of spheres in degree $n-1$ on the number of derangements of $n$.
\end{proof}

  It was shown in~\cite{zbMATH06435208} that path homology is homotopy invariant. This was generalized in~\cite[Proposition~8.1]{zbMATH07731261} to the pages of the magnitude-path spectral sequence, for $r\geq 2$.
We now use Corollary~\ref{cor:complete} to show that regular path homology is not homotopy invariant (as opposed to path homology).

\begin{corollary}\label{cor:nonhinvpathhom}
    Regular path homology is not invariant with respect to homotopy equivalence of digraphs. 
\end{corollary}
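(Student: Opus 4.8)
The plan is to produce two digraphs that are homotopy equivalent as digraphs but whose regular path homology groups differ, reading off the computation on one of them from Corollary~\ref{cor:complete}. Concretely, I would compare the bicomplete digraph $BK_n=\rho(K_n)$, for some fixed $n\geq 2$, with the one-point digraph $\{*\}$.

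The first and central step is to check that $BK_n$ is contractible in the sense of digraph homotopy, that is, homotopy equivalent to $\{*\}$. Fix a vertex $a$ of $BK_n$, let $I$ be the line digraph on $\{0,1\}$ with the single edge $0\to 1$, and define $F\colon BK_n\times I\to BK_n$ on vertices by $F(x,0)=x$ and $F(x,1)=a$, where $\times$ is the (directed) Cartesian product used in digraph homotopy theory. To see that $F$ is a morphism of digraphs one checks the three types of edges of the product: an edge $(x,0)\to(y,0)$ arising from $x\to y$ is sent to the edge $x\to y$; an edge $(x,1)\to(y,1)$ is collapsed to the vertex $a$; and a ``vertical'' edge $(x,0)\to(x,1)$ is sent to the pair $x\to a$, which is an edge of $BK_n$ when $x\neq a$ and degenerates to the vertex $a$ when $x=a$, using that any two vertices of $BK_n$ are joined in both directions. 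Thus $F$ is a homotopy from $\mathrm{id}_{BK_n}$ to the constant map at $a$, and $BK_n\simeq\{*\}$.

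It then remains to compare the two regular path homologies. The one-point digraph has regular path homology equal to $R$ concentrated in degree $0$. On the other hand, Corollary~\ref{cor:complete} identifies the regular path homology of $K_n=\rho(K_n)=BK_n$ as concentrated in degree $n-1$, of rank the number of derangements $D(n)$. Since $D(n)\geq 1$ and $n-1\geq 1$ for every $n\geq 2$, the homotopy equivalent digraphs $BK_n$ and $\{*\}$ have non-isomorphic regular path homology, proving the claim. Taking for instance $n=3$, where $D(3)=2$, already yields an explicit counterexample.

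The one point worth emphasising, rather than a genuine obstacle, is the contrast with ordinary path homology: since path homology is homotopy invariant by~\cite{zbMATH06435208}, it necessarily assigns to both $BK_n$ and $\{*\}$ the same (reduced-trivial) groups, so the discrepancy detected above lives entirely in the passage from allowed (regular) to strongly regular elementary paths. Verifying the digraph homotopy in the first step is the only place where care is needed, and it reduces to the elementary edge-by-edge check described above.
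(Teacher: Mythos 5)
Your proposal is correct and follows essentially the same route as the paper: both compare the contractible complete digraph $\rho(K_n)$ with the one-point digraph and invoke Corollary~\ref{cor:complete} to see that their regular path homologies differ. The only difference is that where the paper cites \cite[Example~3.11]{zbMATH06435208} for the contractibility of $K_n$, you verify it directly with the one-step homotopy $F(x,0)=x$, $F(x,1)=a$ on the box product with the edge $0\to 1$, which is a correct (and self-contained) inlining of that cited fact.
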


\begin{proof}
Recall that complete graphs are regularly diagonal, hence the second page of the regular magnitude-path spectral sequence is non-trivial only on the diagonal $k=\ell$.  By \cite[Example~3.11]{zbMATH06435208} (see also \cite[Example~5.6]{zbMATH07731261}), the complete graph~$K_n$ is contractible and homotopy equivalent to the one-vertex graph. If the pages of the regular magnitude-path spectral sequence were homotopy invariant, then, by restriction, also regular path homology would be homotopy invariant. However, the regular path homology of the complete graph $K_n$ is concentrated in degree $n-1$ and is of rank the number of derangements $D(n)$, whereas the regular path homology of the one-point graph is trivial. 
\end{proof}

\begin{remark}\label{rem:homotopy}
    Chaplin, Harrington and Tillmann study in~\cite{chaplin2024notion} the homology of the directed flag complex and, by comparison with path homology, they establish a homotopy-like equivalence relation on digraph maps, under which equivalent maps induce the same morphisms on the homology of the directed flag complex.
    Though it may be possible to exploit their work to shed light on potential notions of homotopy invariance for the regular magnitude-path spectral sequence, pursuing this line of inquiry would require a substantial detour from the main objectives of the present paper, therefore we leave  this question open. 
\end{remark}

\subsection{Decategorification} We conclude with some considerations on the decategorification of  eulerian magnitude homology.

Recall that the magnitude $\#G(q)$ of a graph~$G$ is equivalently defined as follows -- see~\cite[Definition~2.1 \& Proposition~3.9]{leinster2019magnitude}:
\begin{equation}\label{eq:magnitudegraphs}
    \#G(q)= \sum_{k=0}^{\infty} (-1)^k \sum_{x_0\neq \dots\neq x_k} q^{d_G(x_0,x_1)+\dots+d_G(x_{k-1},x_k)}
\end{equation}
where $x_0, \dots, x_k$ run across all vertices of $G$. By \cite[Theorem 2.8]{hepworth2015categorifying}, this agrees with the decategorification of magnitude homology, that is
\begin{equation*}
    \sum_{k,\ell}(-1)^k\mathrm{rk}\;\MH_{k,\ell}(G) q^\ell = \#G(q)
\end{equation*}
for any undirected graph $G$.
In analogy with the definition of magnitude of a graph, we can define the \emph{regular magnitude} of a graph $G$ as the polynomial
\begin{equation}\label{eq:eumagnitudegraphs}
    \#_r G(q)= \sum_{k=0}^{|V(G)|} (-1)^k \sum_{x_0\neq \dots\neq x_k} q^{d_G(x_0,x_1)+\dots+d_G(x_{k-1},x_k)}
\end{equation}
where the vertices $x_0,\dots,x_k$ appearing in the sum are all distinct. The regular magnitude  $\#_r G(q)$ is not the magnitude of a graph, unless $G$ is a point. In fact, the first is a polynomial whereas the magnitude of a graph is a power series.  We give a different proof which uses the (regular) magnitude-path spectral sequences, and which might shed some light on the behavior of $\#_r G(q)$. 

\begin{proposition}\label{prop:eulmag}
Let $G$ be an undirected connected graph which is not a point. 
    Then, the decategorification of eulerian magnitude homology is  not the magnitude of any graph.
\end{proposition}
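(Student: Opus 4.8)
The plan is to read the regular magnitude off the first page of the regular magnitude-path spectral sequence and then pin down its value at $q=1$ through the abutment. By item~(1) of Theorem~\ref{thm:rmpss} the $E_1$-page of the spectral sequence is eulerian magnitude homology, so taking the Euler characteristic of the eulerian chain complex in each length $\ell$ gives
\[
\#_r G(q)=\sum_{k,\ell}(-1)^k\,\mathrm{rk}\,\EMH_{k,\ell}(G)\,q^\ell .
\]
Since tuples of pairwise distinct vertices have bounded length, this is a polynomial. Its coefficient of $q^1$ receives contributions only from the $k=1$ eulerian chains $(x_0,x_1)$ with $d_G(x_0,x_1)=1$, hence equals $-2|E(G)|$. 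As $G$ is connected and not a point it has an edge, so $\#_r G(q)$ is a polynomial of degree at least $1$, in particular non-constant.

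Next I would use item~(3) of Theorem~\ref{thm:rmpss}: the spectral sequence converges to $H_*(\Inj(G))$, and since the Euler characteristic is preserved along the pages we obtain $\#_r G(1)=\chi(\Inj(\rho(G)))$. For a connected undirected $G$ on $n$ vertices the reachability preorder $\Pre(\rho(G))$ is the complete preorder, so $\Inj(\rho(G))\cong\Delta(V(G))$ is the classical complex of injective words (cf.\ Example~\ref{ex:bicomplete} and Remark~\ref{rem:reach-inj}). By Theorem~\ref{thm:homotopycomplexinj} it is a wedge of $D(n)$ copies of $S^{n-1}$, whence
\[
\#_r G(1)=\chi(\Delta(V(G)))=1+(-1)^{n-1}D(n).
\]

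For the comparison I would run the analogous computation for a hypothetical undirected graph $H$ with $\#H(q)=\#_r G(q)$, now through the (standard) magnitude-path spectral sequence applied to $\rho(H)$, whose $E_1$-page is ordinary magnitude homology and which converges to $H_*(N(\Reach_{\rho(H)}))$. Each connected component of $H$ gives a strongly connected $\rho$, hence an indiscrete (contractible) reachability category, so $\chi(N(\Reach_{\rho(H)}))$ equals the number of connected components of $H$, a strictly positive integer; thus $\#H(1)\in\mathbb{Z}_{>0}$. When $n$ is even this already contradicts $\#_r G(1)=1-D(n)\le 0$, as $D(n)\ge 1$ for $n\ge 2$.

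The remaining (odd $n$) cases, where $\#_r G(1)=1+D(n)$ is itself a positive integer, cannot be settled by the value at $q=1$ alone — this is the step I expect to be the main obstacle, compounded by the fact that the magnitude series need not converge at $q=1$ and must be interpreted as the value of the underlying rational function. I would close both gaps using the polynomial structure from the first step: an equality $\#_r G(q)=\#H(q)$ of power series forces $\#H(q)$ to be a polynomial, and the magnitude of a graph is a polynomial only when the graph is edgeless — a graph with an edge has non-vanishing diagonal magnitude homology $\MH_{\ell,\ell}$ for all $\ell$ (equivalently, a pole as a rational function), hence infinitely many nonzero coefficients. An edgeless $H$ has constant magnitude $\#H(q)=|V(H)|$, which cannot equal the non-constant polynomial $\#_r G(q)$, giving the desired contradiction for every connected $G$ that is not a point.
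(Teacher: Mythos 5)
Your steps 1--3 are, in substance, the paper's own argument: read the two decategorifications through the regular and the ordinary magnitude-path spectral sequences, use invariance of the Euler characteristic across pages, and compare the abutments --- $\Inj(G)$, a wedge of $D(n)$ copies of $S^{n-1}$ with $\chi=1+(-1)^{n-1}D(n)\neq 1$, against the contractible nerve of $\Reach_H$ with $\chi=1$. You are in fact more careful than the paper on two points. The paper evaluates at $q=-1$, whereas your choice $q=1$ is the evaluation actually compatible with page-invariance: every differential $d^r$ drops the homological degree $k$ by exactly $1$ (while dropping the filtration degree $\ell$ by $r$), so $\sum_{k,\ell}(-1)^k\mathrm{rk}$ is preserved from page to page, while the $(-1)^{k+\ell}$-weighted sum is not preserved under $d^r$ for odd $r$. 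You also flag explicitly that on the $\MH$ side the sum diverges and must be read as the value of the underlying rational function. Note, moreover, that under the paper's standing convention (Section~\ref{sec:EMH and RPH}: all graphs are finite and connected) your steps 1--3 already finish the proof for every $n\geq 2$ and \emph{both} parities, since a connected $H$ gives $\#H(1)=1\neq 1\pm D(n)$; the parity split matters only because you allow disconnected $H$, a case the paper's proof does not treat.

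That extra case is where your proposal has a genuine gap. The final polynomiality step rests on the claim that a graph with at least one edge has magnitude power series with infinitely many nonzero coefficients, and your justification does not establish it: from $\MH_{\ell,\ell}(H)\neq 0$ for all $\ell$ one cannot conclude that the coefficient of $q^\ell$ in $\#H(q)$ is nonzero, because that coefficient is the full alternating sum $\sum_k(-1)^k\mathrm{rk}\,\MH_{k,\ell}(H)$, and off-diagonal groups can cancel the diagonal contribution (the implication is valid for diagonal graphs, but your hypothetical $H$ is arbitrary). The parenthetical equivalence with ``a pole as a rational function'' is likewise asserted rather than proved. So the assertion that the magnitude of a graph is a polynomial only if the graph is edgeless, while plausible, remains unproven as stated --- and observe that if it were available, it would render the entire spectral-sequence apparatus unnecessary, since a non-constant polynomial can equal neither a constant series (edgeless $H$) nor a non-polynomial one (any other $H$); that the whole proposition would collapse to this one claim is a sign of how much weight it is carrying. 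To match the paper's statement you can simply drop this step and invoke the connectedness convention; to genuinely cover disconnected $H$ you would need an actual proof of the non-polynomiality claim.
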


\begin{proof}
     By \cite[Theorem 2.8]{hepworth2015categorifying}, the magnitude of a graph~$G$ agrees with the decategorification of magnitude homology of $G$. The evaluation at $q=-1$ gives the sum
    \begin{equation}\label{eq:dec}
     \sum_{\ell\geq 0} (-1)^\ell\chi(\MH_{*,\ell}(G)) \ ,
     \end{equation}
     where $\chi(\MH_{*,\ell}(G))$ denotes the Euler characteristic of the $\ell$-component of magnitude homology $\MH_{*,\ell}(G)$. As magnitude homology is the first page in the magnitude-path spectral sequence, the quantity in Eq.~\eqref{eq:dec} is the Euler characteristic of the whole first page. The Euler characteristic is preserved by changing the pages of the spectral sequence, hence it agrees with the Euler characteristic of the limit object; that is,
     \[
     \sum_{\ell\geq 0} (-1)^\ell\chi(\MH_{*,\ell}(G))=\chi(\Reach_G) \ .
     \]
     As $G$ is an undirected connected graph, the category $\Reach_G$ is always contractible, hence its Euler characteristic is $1$. 
     
     On the other hand, the decategorification of the eulerian magnitude homology evaluated at $q=-1$ yields the quantity
    \begin{equation*}
     \sum_{\ell\geq 0} (-1)^\ell\chi(\EMH_{*,\ell}(G)) \ ,
     \end{equation*}
     which is the Euler characteristic of the first page in the regular magnitude-path spectral sequence. Its limit object is the complex of injective words on $|V(G)|>1$ letters, and we have 
       \[
     \sum_{\ell\geq 0} (-1)^\ell\chi(\EMH_{*,\ell}(G))=\chi(\Inj(V(G))) \ .
     \]
     By Theorem~\ref{thm:homotopycomplexinj}, $\chi(\Inj(V(G)))\neq 1$ for any undirected connected graph $G$ which is not a point, whereas $\chi(\Reach_H)=1$ for any connected graph $H$. It follows that there exists no graph $H$ such that $ \sum_{\ell\geq 0} \chi(\EMH_{*,\ell}(G))q^\ell=\# H(q)$.
\end{proof}

As to the authors' knowledge the regular magnitude of a graph had not appeared before, we conclude with the following question;

\begin{question}
    Which properties does $\#_r G(q)$ satisfy?
\end{question}

\bibliographystyle{alpha}
\bibliography{biblio}
 
\end{document}